\newcommand{\myFigures}[1]{#1}
\newcommand\myLangle{\langle~}
\newcommand\myRangle{~\rangle}
\newcommand\grLangle{\langle~}
\newcommand\grRangle{~\rangle}
\newcommand\extraIEqual{}
\newcommand\overviewFigureSize{\footnotesize}
\newcommand\overviewFigureLinkSize{\normalsize}
\newcommand{\Id}{\mathrm{Id}}
\newcommand{\C}{\mathbb{C}}
\renewcommand{\H}{\mathbb{H}}
\newcommand{\Q}{\mathbb{Q}}
\newcommand{\Z}{\mathbb{Z}}
\newcommand{\PSL}{\mathrm{PSL}}
\newcommand{\SL}{\mathrm{SL}}
\newcommand{\PGL}{\mathrm{PGL}}
\newcommand{\homologyH}{H}
\newcommand{\sqrtMinusDSymbol}{\psi}
\newcommand{\sqrtMinusDSymbolTable}[1]{\sqrt{-#1}}
\newcommand{\idealNorm}{\mathcal{N}}
\newcommand{\myMod}{\!\!\mod}
\newcommand{\containedInI}{I \subset}
\newcommand{\markConjugate}[1]{\phantom{*}\hfill #1 \hfill*}
\newcommand{\explainConjugate}[1]{%
\bigskip%
\begin{minipage}{17.5cm}%
\rule{6.5cm}{0.4pt}\\%
$\phantom{L}^{\mbox{*}}$ Ideal conjugate to ideal in Figure~\ref{#1}.%
\end{minipage}}
\newtheorem{theorem}{Theorem}[section]
\newtheorem{lemma}[theorem]{Lemma}
\theoremstyle{definition}
\newtheorem{example}[theorem]{Example}
\newsavebox\myboxA
\newsavebox\myboxB
\newlength\mylenA
\newcommand*\myOverline[1]{%
    \sbox{\myboxA}{$\m@th#1$}%
    \setbox\myboxB\null% Phantom box
    \ht\myboxB=\ht\myboxA%
    \dp\myboxB=\dp\myboxA%
    \ifdim\wd\myboxA<45pt%
        \wd\myboxB=\wd\myboxA% Scale phantom
    \else
        \wd\myboxB=45pt% Scale phantom
    \fi%
    \sbox\myboxB{$\m@th\overline{\copy\myboxB}$}%  Overlined phantom
    \setlength\mylenA{\the\wd\myboxA}%   calc width diff
    \addtolength\mylenA{-\the\wd\myboxB}%
    \ifdim\wd\myboxB<\wd\myboxA%
       \rlap{\hskip 0.5\mylenA\usebox\myboxB}{\usebox\myboxA}%
    \else
        \hskip -0.5\mylenA\rlap{\usebox\myboxA}{\hskip 0.5\mylenA\usebox\myboxB}%
    \fi}
\title{Technical Report: All Principal Congruence Link Groups}
\author{M. D. Baker}
\author{M. Goerner}
\author{A. W. Reid}
\address{\newline
IRMAR,\newline
Universit\'e de Rennes 1,\newline
35042 Rennes Cedex,\newline
France.}
\email{mark.baker@univ-rennes1.fr }
\address{\newline 
Pixar Animation Studios, \newline
1200 Park Avenue,\newline
Emeryville, CA 94608, USA.}
\email{enischte@gmail.com}
\address{\newline
Department of Mathematics,\newline
Rice University,\newline
Houston, TX 77005, USA}
\email{alan.reid@rice.edu}
\begin{document}

\begin{abstract}
This is a technical report accompanying the paper ``All Principal Congruence Link Groups'' classifying all principal congruence link complements in $S^3$ by the same authors. It provides a complete overview of all cases $(d,I)$ that had to be considered, as well as describes the necessary computations and computer programs written for the classification result.
\end{abstract}

\maketitle

\setcounter{tocdepth}{2}

\tableofcontents

\section{Introduction}

We follow the notation %%AR: changed in to of
of \cite{bgr18:AllPrinCong}, in particular, let $d>0$ be a square-free integer and $I\subset O_d$ be an ideal in the ring of integers $O_d$ of $\Q(\sqrt{-d})$. The goal of this report is to give a complete proof of the following theorem stated in \cite{bgr18:AllPrinCong}:
\begin{theorem}
\label{main}
The following list of $48$ pairs $(d,I)$ describes all
principal congruence subgroups $\Gamma(I) < \PSL(2,O_d)$ such that
$\H^3/\Gamma(I)$ is a link complement in $S^3${\normalfont :}

\begin{enumerate}
\item {\bf $d=1$:}~$I=\myLangle 2 \myRangle $, $\extraIEqual\myLangle 2\pm i\myRangle $, $\extraIEqual\myLangle (1\pm i)^3\myRangle $, $\extraIEqual\myLangle 3\myRangle $, $\extraIEqual\myLangle 3\pm i\myRangle $, $\extraIEqual\myLangle 3\pm 2i\myRangle $, $\extraIEqual\myLangle 4\pm i\myRangle $.
\item {\bf $d=2$:}~$I = \myLangle 1\pm \sqrt{-2}\myRangle $, $\extraIEqual\myLangle 2\myRangle $, $\extraIEqual\myLangle 2\pm \sqrt{-2}\myRangle $, $\extraIEqual\myLangle 1\pm 2\sqrt{-2}\myRangle $, $\extraIEqual\myLangle 3\pm \sqrt{-2}\myRangle $.
\item {\bf $d=3$:}~$I = \myLangle 2\myRangle $,  $\extraIEqual \myLangle 3\myRangle $, $\extraIEqual \myLangle (5\pm \sqrt{-3})/2\myRangle $, $\extraIEqual \myLangle 3\pm \sqrt{-3}\myRangle $, $\extraIEqual \myLangle (7\pm \sqrt{-3})/2\myRangle $, $\extraIEqual \myLangle 4\pm \sqrt{-3}\myRangle $, $\extraIEqual \myLangle (9\pm \sqrt{-3})/2\myRangle $.
\item {\bf $d=5$:}~$I=\myLangle 3,(1\pm \sqrt{-5})\myRangle $.
\item {\bf $d=7$:}~$I = \myLangle (1\pm \sqrt{-7})/2\myRangle $, $\extraIEqual \myLangle 2\myRangle $, $\extraIEqual \myLangle (3\pm \sqrt{-7})/2\myRangle $, $\extraIEqual \myLangle \pm \sqrt{-7}\myRangle $, $\extraIEqual \myLangle 1\pm \sqrt{-7}\myRangle $, $\extraIEqual \myLangle (5\pm \sqrt{-7})/2\myRangle $, $\extraIEqual \myLangle 2\pm \sqrt{-7}\myRangle $, $\extraIEqual \myLangle (7\pm \sqrt{-7})/2\myRangle $, $\extraIEqual \myLangle (1\pm 3\sqrt{-7})/2\myRangle $.
\item {\bf $d=11$:}~$I = \myLangle (1\pm \sqrt{-11})/2\myRangle $, $\extraIEqual \myLangle (3\pm \sqrt{-11})/2\myRangle $, $\extraIEqual \myLangle (5\pm \sqrt{-11})/2\myRangle $.
\item {\bf $d=15$:}~$I = \myLangle 2,(1\pm \sqrt{-15})/2\myRangle $, $\extraIEqual\myLangle 3,(3\pm \sqrt{-15})/2\myRangle $, $\extraIEqual\myLangle (1\pm \sqrt{-15})/2\myRangle $, $\extraIEqual\myLangle 5,(5\pm \sqrt{-15})/2\myRangle $, $\extraIEqual\myLangle (3\pm \sqrt{-15})/2\myRangle $.
\item {\bf $d=19$:}~$I =\myLangle (1\pm \sqrt{-19})/2\myRangle $.
\item {\bf $d=23$:}~$I = \myLangle 2,(1\pm \sqrt{-23})/2\myRangle $,  $\extraIEqual \myLangle 3,(1\pm \sqrt{-23})/2\myRangle $, $\extraIEqual \myLangle 4,(3\pm \sqrt{-23})/2\myRangle $.
\item {\bf $d=31$:}~$I = \myLangle 2,(1\pm \sqrt{-31})/2\myRangle $, $\extraIEqual \myLangle 4,(1\pm \sqrt{-31})/2\myRangle $, $\extraIEqual \myLangle 5,(3\pm \sqrt{-31})/2\myRangle $.
\item {\bf $d=47$:}~$I = \myLangle 2,(1\pm \sqrt{-47})/2\myRangle $, $\extraIEqual \myLangle 3,(1\pm \sqrt{-47})/2\myRangle $, $\extraIEqual \myLangle 4,(1\pm \sqrt{-47})/2\myRangle $.
\item {\bf $d=71$:}~$I =\myLangle 2,(1\pm \sqrt{-71})/2\myRangle $.
\end{enumerate}
\end{theorem}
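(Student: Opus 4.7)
The plan is to split the theorem into a finiteness part (only finitely many pairs $(d,I)$ are candidates, and we have an explicit finite list that contains all link-complement cases) and a realization/obstruction part (for each candidate on the list we either exhibit a link in $S^3$ whose complement is $\H^3/\Gamma(I)$, or we rule it out). Because we already cite \cite{bgr18:AllPrinCong}, I may assume the upper bounds there on $d$ (the $12$ values of $d$ appearing in the statement) and on the norm $N(I)$ for each such $d$: these bounds come from a volume/cusp-number estimate comparing the covolume of $\Gamma(I)$ inside $\PSL(2,O_d)$ with the number of cusps of $\H^3/\Gamma(I)$, which must agree with the Euler characteristic constraint coming from the link-complement hypothesis. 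This reduces the classification to checking a concrete, enumerable finite set of pairs, and so the report's role is to perform that check exhaustively.

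The main obstruction I would use to eliminate candidates is a homological one: if $M=\H^3/\Gamma(I)$ is homeomorphic to a link complement in $S^3$ with $c$ cusps, then $H_1(M;\Z)\cong \Z^c$, i.e.\ $H_1(M;\Z)$ is torsion-free of rank equal to the number of cusps. Accordingly, for each candidate $(d,I)$ I would (i) compute the cusp number $c(d,I)=|\PSL(2,O_d/I)|/|(\text{peripheral image})|$ from the structure of $O_d/I$ and of the cusps of $\PSL(2,O_d)$, and (ii) compute $H_1(\Gamma(I);\Z)$, using a presentation of $\PSL(2,O_d)$ (e.g.\ via a Ford domain) together with the Reidemeister--Schreier rewriting for the finite-index subgroup $\Gamma(I)\triangleleft \PSL(2,O_d)$. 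This step is a large but routine computer calculation; every $(d,I)$ not on the list of 48 must fail the equality $\dim_\Q H_1(\Gamma(I);\Q)=c(d,I)$ or have nontrivial torsion in $H_1$, which gives the non-existence half of the theorem.

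For the 48 surviving pairs I would realize each $\H^3/\Gamma(I)$ explicitly as a link complement. The natural tool is SnapPy: start from a fundamental domain for $\PSL(2,O_d)$, build a triangulation of $\H^3/\Gamma(I)$ as a cover, Dehn-fill all cusps trivially, and verify (via isometry testing in SnapPy, matched against a catalogued link from Rolfsen / the Hoste--Thistlethwaite tables, or by explicit Kirby moves on the filled triangulation) that the result is $S^3$; the unfilled manifold is then the complement of the cores of the solid tori, producing the link. In many of the low-$d$ cases (especially $d=1,2,3$) the link can be drawn by hand from symmetries of the cusp arrangement and then checked rigorously against the congruence quotient via the Bianchi group action on $O_d/I$. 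The hard part, and what this report's bulk is organized around, is the bookkeeping: tracking for each $(d,I)$ the correct number of cusps, the correct peripheral subgroups (to ensure we are computing $\Gamma(I)$ and not a conjugate or a different congruence subgroup), and, when $\Z/I$ splits into distinct prime-power factors, the correct decomposition $O_d/I\cong\prod O_d/P_i^{a_i}$ that controls the index and the rank of $H_1$. I expect the main obstacle to be uniformity: no single script handles all 48 cases, and a handful of pairs (typically those with $d$ large and $N(I)$ small, where cusp counts are delicate) require individual verification and either an ad hoc Kirby-calculus argument or a direct isometry identification to a known link complement.
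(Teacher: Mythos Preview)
Your overall strategy matches the paper's: reduce to a finite list via the bounds in \cite{bgr18:AllPrinCong}, then for each candidate either produce Dehn-fillings that kill $\pi_1$ (hence give $S^3$ by Perelman) or compute an obstruction. However, your obstruction side has a genuine gap.

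You assert that every $(d,I)$ not on the list of 48 will be eliminated by the homological test $H_1(M;\Z)\cong\Z^c$. This is false. The paper singles out exactly three cases---$(1,\langle 4+3i\rangle)$, $(2,\langle 1+3\sqrt{-2}\rangle)$, and $(3,\langle(11+\sqrt{-3})/2\rangle)$---that are \emph{homologically} link complements (the homology is the right free abelian group) but are \emph{not} topologically link complements. For these, homology tells you nothing, and the paper has to work much harder: it shows that the quotient $B(I)=\PSL(2,O_d)/N(I)$ is infinite, where $N(I)$ is the normal closure of the peripheral parabolics in $\Gamma(I)$. Since $|B(I)|=|\PSL(2,O_d/I)|$ is a necessary condition for $M$ to be a link complement, infiniteness of $B(I)$ is an obstruction that survives when homology does not. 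Two of the three are handled by finding finite-index subgroups of $B(I)$ (or of a $\PGL$ variant) with infinite abelianization via GAP; the case $(2,\langle 1+3\sqrt{-2}\rangle)$ is harder still and required an automatic-structure computation in MAF to certify that $B(I)$ is infinite. Your proposal would incorrectly leave these three on the ``surviving'' side and then fail to realize them as link complements, with no explanation for the failure.

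A smaller point: for the positive side you say ``Dehn-fill all cusps trivially'' and then match against link tables. The paper does neither. There is no canonical ``trivial'' slope on the cusps of $M$; one must \emph{search} for a system of peripheral curves (one per cusp) whose normal closure is all of $\pi_1(M)$, and then invoke Perelman to conclude the filled manifold is $S^3$. The paper provides such curves explicitly (as SnapPy triangulations with meridians set, or as Magma words in $N(I)$), and the verification is that the resulting group presentation simplifies to the trivial group---no isometry-matching against catalogues is used or needed.
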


Recall that Theorem~2.1 and 2.2 in the Section ``Preliminaries and techniques'' of \cite{bgr18:AllPrinCong} reduce the proof to finitely many cases $(d,I)$ in which we need to decide whether $M=\H^3/\Gamma(I)$ is a link complement. This report is a self-contained treatise of all of these cases documenting the necessary data, computer programs, and computations. We have compiled a separate document \cite{BGR:links} containing link diagrams 

for all cases where a diagram is known and for which the complement in $S^3$ is a principal congruence manifold.

The first and third, respectively, the second author developed methods to decide whether $M=\H^3/\Gamma(I)$ is a link complement independently. To reflect this, the report is split into a preamble and two parts each illustrating the method by one set of authors. The preamble contains the three special cases 

$(1,\myLangle 4+3\sqrt{-1}\myRangle), (2,\myLangle 1+3\sqrt{-2}\myRangle),$ and $(3,\myLangle \frac{11+\sqrt{-3}}{2}\myRangle)$ where $M$ is homologically but not topologically a link complement.
 All other cases left by the aforementioned theorems in \cite{bgr18:AllPrinCong} are each covered by both methods. Thus, a complete proof of the classification result can be obtained in two ways, namely, by combining Section~\ref{sec:casesInsuffHom} with either Part~\ref{part:matthias} or Part~\ref{part:markAndAlan} of this report.

Of the three special cases, $(2,\myLangle 1+3\sqrt{-2} \myRangle)$ was particularly hard and required finding an automatic group structure using the program Monoid Automata Factory (MAF) \cite{MAF}. The other two special cases had already been addressed in \cite{goerner:regTessLinkComps}. In that paper, the principal congruence link complements of discriminant $d=1$ and $d=3$ were classified using a combinatorial argument which, unlike the proof of Theorem~2.2 in \cite{bgr18:AllPrinCong}, does not rely on the 6-Theorem or a bound on the systole. 

The methods in Part~\ref{part:matthias} and \ref{part:markAndAlan} both start with information about the Bianchi groups $\PSL(2,O_d)$ in question. For Part~\ref{part:matthias}, this is in the form of a triangulated Dirichlet domain of $\PSL(2,O_d)$ together with the face-pairing matrices, see Section~\ref{sec:diriBianchi}. The second author computed these using SageMath \cite{Sage} and they are available as Regina files, see Section~\ref{sec:fundPoly} for details. For Part~\ref{part:markAndAlan}, this is in the form of presentations of $\PSL(2,O_d)$ given by Swan and Page which we also list in Section~\ref{sec:presentationBianchi}. That section also includes the peripheral subgroups of $\PSL(2,O_d)$ which were not given by Page and had to be derived manually by the first and third authors.

Part~\ref{part:matthias} uses the fundamental domain of $\PSL(2,O_d)$ to build a triangulation of the principal congruence manifold $M=\H^3/\Gamma(I)$ which can be examined by the 3-manifold software SnapPy \cite{SnapPy} to determine whether $M$ is a link complement. The results are shown in diagrammatic form in Section~\ref{sec:overviewDiagrams}.

Part~\ref{part:markAndAlan} uses the presentation of $\PSL(2,O_d)$ to obtain a presentation of the quotient group $B(I)$ of $\PSL(2,O_d)$ by the subgroup $N(I)$ generated by the parabolic elements of $\Gamma(I)$. Using the computer algebra system Magma \cite{Magma}, $B(I)$ and $N(I)$ can be examined, the first step being to decide whether $|B(I)|=|\PSL(2,O_d/I)|$ which is a necessary condition for $M=\H^3/\Gamma(I)$ to be a link complement (see Section~\ref{sec:Prelims}). The results are tabulated in Section~\ref{sec:overviewTables}.

Both parts rely on Perelman's 
resolution of the Poincar\'e conjecture to show that $M=\H^3/\Gamma(I)$ is a link complement in $S^3$ by finding a peripheral curve for each cusp of $M$ such that the curves kill $\pi_1(M)$. If $|B(I)|=|\PSL(2,O_d/I)|$, this is equivalent to finding a parabolic element in $N(I)$ for each cusp of $M$ such that the elements kill $N(I)$ since $N(I)$ is isomorphic to $\Gamma(I)\cong \pi_1(M)$.

There is a large difference in the performance of the two methods and the first and third authors needed far less computer time than the second author to arrive at the classification result. The method of the second author requires at least $O(|\PSL(2,O_d/I)|)$ time to construct the triangulation and the difference in performance becomes more pronounced when $|\PSL(2,O_d/I)|$ is large. In these cases, we also observed that computing $N(I)$ with Magma's \texttt{NormalClosure} can take much more time than the computations with $B(I)$ to prove it large enough.

We are very grateful to the people who helped us with this work and refer the reader to \cite{bgr18:AllPrinCong} for detailed acknowledgments.

\section{Notation and preliminaries} \label{sec:Prelims}

Let $\psi=\sqrt{-d}$ and $\omega_d=(1+\sqrt{-d})/2$ (if $d\equiv 3\myMod 4$), respectively, $\omega_d=\sqrt{-d}$ (otherwise).

Besides the principal congruence group $\Gamma(I)=\ker(\pi)$, we will also consider another congruence group $\Gamma_1(I)=\pi^{-1}(P)$ where $P$ are the upper unit-triangular matrices in $\PSL(2,O_d/I)$ and $\pi:\PSL(2,O_d)\twoheadrightarrow \PSL(2,O_d/I)=\SL(2,O_d/I)/{\pm \mathrm{Id}}$.

Let $P_x$ be the parabolic elements in $\PSL(2,O_d)$ fixing $x\in\mathrm{P}^1(\Q(\sqrt{-d}))$ and $P_x(I)=P_x\cap \Gamma(I)$. Let $N(I)$ be the normal subgroup of $\PSL(2,O_d)$ obtained as the normal closure of all $P_x(I)$. Let $B(I)=\PSL(2,O_d)/N(I)$. Note that $N(I)\subset \Gamma(I)$ so there is an epimorphism $B(I)\twoheadrightarrow \PSL(2,O_d/I)$. As discussed in \cite[\S{}2.2]{bgr18:AllPrinCong}, the following statements are equivalent and necessary for $M$ to be a link complement:
\begin{itemize}
\item $|B(I)|=|\PSL(2,O_d/I)|$
\item $B(I)\twoheadrightarrow \PSL(2,O_d/I)$ is an isomorphism
\item $N(I)$ and $\Gamma(I)\cong \pi_1(M)$ are the same subgroup of $\PSL(2,O_d)$.
\end{itemize}

The size of $\PSL(2,O_d/I)$ is given in \cite{bakerReid:prinCong} by \label{sec:containsPslSizeEquation}
\begin{equation} \label{eqn:PslSizeEquation}
|\PSL(2,O_d/I)|=\frac{\idealNorm(I)^3}{\tau}\prod_{P|I} \left(1-\frac{1}{\idealNorm(P)^2}\right) \quad\mbox{where}\quad \tau= \begin{cases}
 1 &  \text{when } 2\in I\\
2 & \text{otherwise}
\end{cases}
\end{equation}
and where $P$ runs over the prime ideal divisors of $I$ and $\idealNorm(P)=|O_d/P|$ is the norm of $P$ (in particular, $|\PSL(2,O_d/I)|=6$ if $\idealNorm(I)=2$).

\section{Cases where homology was insufficient} \label{sec:casesInsuffHom}

\begin{lemma} \label{lemma:specialCases}
The principal congruence manifolds for $(1, \langle 4+3\sqrt{-1}\rangle)$ and $(3, \langle \frac{11+\sqrt{-3}}{2}\rangle)$ are not link complements.
\end{lemma}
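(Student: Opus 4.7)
The plan is to apply the necessary condition from Section~\ref{sec:Prelims}: if $|B(I)| > |\PSL(2, O_d/I)|$, then $N(I) \subsetneq \Gamma(I)$, and so $M = \H^3/\Gamma(I)$ cannot be a link complement in $S^3$. The title of Section~\ref{sec:casesInsuffHom} signals that for these two cases the easier (abelianized) homological test is inconclusive, so one has to verify the non-abelian obstruction by computing $B(I)$ directly.

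Concretely, I would start from the Swan/Page presentations of $\PSL(2, O_1)$ and $\PSL(2, O_3)$ recorded in Section~\ref{sec:presentationBianchi}. Since $O_1$ and $O_3$ have class number one, $\mathrm{P}^1(\Q(\sqrt{-d}))$ consists of a single $\PSL(2, O_d)$-orbit, so $N(I)$ is the normal closure in $\PSL(2, O_d)$ of the single peripheral subgroup $P_\infty(I)$. This is free abelian of rank two, with generators $\begin{pmatrix}1&\alpha_i\\0&1\end{pmatrix}$ ($i=1,2$) coming from any $\Z$-basis $\{\alpha_1,\alpha_2\}$ of the principal ideal $I$. Adjoining these two parabolics as relators to the Swan/Page presentation gives a finite presentation of $B(I)$; a coset enumeration in Magma then computes its order, to be compared against the sizes $|\PSL(2, O_1/\langle 4+3\sqrt{-1}\rangle)| = 7500$ and $|\PSL(2, O_3/\langle (11+\sqrt{-3})/2\rangle)| = 14880$ obtained from equation~(\ref{eqn:PslSizeEquation}).

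The main obstacle is computational: the Todd--Coxeter enumeration must terminate with a value strictly exceeding the target, or, if $B(I)$ is infinite, be certified as such, typically by exhibiting a finite-index subgroup with infinite abelianization. As a purely combinatorial alternative avoiding this machine computation, one may invoke the classification in \cite{goerner:regTessLinkComps} of principal congruence link complements for $d = 1$ and $d = 3$, which rules out both ideals treated here via arguments on the regular tessellation structure that do not rely on the 6-Theorem.
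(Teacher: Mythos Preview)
Your proposal is correct and aligns with the paper's approach. In both cases $B(I)$ turns out to be infinite, so direct coset enumeration does not terminate; the paper proceeds exactly via the fallback you name, exhibiting a finite-index subgroup with infinite abelianization (working with the closely related quotient $B_{\PGL}(I)=\PGL(2,O_d)/N(I)$, passing to its derived subgroup, and locating low-index subgroups---in one case further composing with a quotient to $\PSp(4,3)$---whose abelianization contains a free $\Z$-summand). Your alternative of citing \cite{goerner:regTessLinkComps} is also legitimate, and the paper notes this as well.
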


\begin{proof}
Following Section~\ref{sec:Prelims}, it is sufficient to show that the group $B(I)=\PSL(2,O_d)/N(I)$, respectively, $B_{\PGL}(I)=\PGL(2,O_d)/N(I)$ is infinite. For $d=1$ and $d=3$, presentations for the latter group were given in \cite[Section~7.2]{goerner:regTessLinkComps}. The following Gap \cite{gap} code
\begin{center}
\begin{minipage}{15cm}
\begin{verbatim}
F := FreeGroup(3);;
P := F.1;; Q := F.2;; R := F.3;;
G := F/[ P^3, Q^4, R^4, (P*Q)^2, (Q*R)^2, (P*Q*R)^2, (Q*R^3)^4 * (R*Q*R^2)^3];;
G1:=DerivedSubgroup(G);;
for H in LowIndexSubgroupsFpGroup(G1,15) do
    if Index(G1, H) = 15 then
        Print(AbelianInvariants(Kernel(GQuotients(H, PSp(4,3))[1])));
    fi;
od;
\end{verbatim}
\end{minipage}
\end{center}
prints \verb"[ 0, 0, 0, 0, 0, 0, 0, 0, 0, 0, 2, 2," ...\verb", 2, 2, 3 ]" showing that $B_{\PGL}(\langle 4+3\sqrt{-1}\rangle)$ has a subgroup with Abelianization $\Z^{10} \oplus \Z_2^{22} \oplus \Z_3$. Similarly, the following code finds two subgroups of $B_{\PGL}(\myLangle \frac{11+\sqrt{-3}}{2}\myRangle)$ with Abelianization $\Z$:
\begin{center}
\begin{minipage}{15cm}
\begin{verbatim}
F := FreeGroup(3);;
P := F.1;; Q := F.2;; R := F.3;;
G := F/[ P^3, Q^3, R^6, (P*Q)^2, (Q*R)^2, (P*Q*R)^2, (Q*R^4)^5 * (R*Q*R^3)^1];;

G1:=DerivedSubgroup(G);;
for H in LowIndexSubgroupsFpGroup(G1,31) do
    if Index(G1, H) = 31 then
        Print(AbelianInvariants(DerivedSubgroup(H)));
    fi;
od;
\end{verbatim}
\end{minipage}
\end{center}
\end{proof}

The above proof for the case $(1, \langle 4+3\sqrt{-1}\rangle)$ has been taken from \cite[Section~10]{goerner:regTessLinkComps}. A different proof for $(3, \langle \frac{11+\sqrt{-3}}{2}\rangle)$ already appeared in \cite[Lemma~8.7]{goerner:regTessLinkComps}. For completeness, we also reproduce \cite[Lemma~4.1]{bgr18:AllPrinCong} and its proof:

\begin{lemma} \label{lemma:nasty}
The principal congruence manifold \,${\Bbb H}^3/\Gamma(\langle 1+3\sqrt{-2} \rangle)$\! is not homeomorphic to a link complement in $S^3$.
\end{lemma}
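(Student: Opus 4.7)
The plan is to show that the necessary condition from Section~\ref{sec:Prelims}, namely $|B(I)| = |\PSL(2, O_d/I)|$, fails for $(d,I) = (2, \langle 1+3\psi\rangle)$. Since $1+3\psi$ has norm $19$ and $2 \notin I$, formula~\eqref{eqn:PslSizeEquation} gives $|\PSL(2, O_2/I)| = 19(19^2-1)/2 = 3420$; it therefore suffices to show that $B(I)$ is infinite, which immediately forces $|B(I)| \neq 3420$.

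The first step is to assemble a finite presentation of $B(I)$ using the ingredients of Section~\ref{sec:presentationBianchi}: take the Swan/Page presentation of $\PSL(2, O_2)$ together with the translation generators $T_1, T_\psi$ of the cusp stabilizer $P_\infty$, then enumerate a transversal for the $\PSL(2, O_2)$-action on $\mathrm{P}^1(O_2/I)$ to obtain representatives for the cusps of $M$. The subgroup $P_\infty(I)$ is generated by the translations by $1+3\psi$ and by $-6+\psi$ (a $\Z$-basis of $I$, since $\psi(1+3\psi) = -6+\psi$); generators of $P_x(I)$ at each remaining cusp $x$ are obtained by conjugating these two translations by the corresponding coset representative. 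Appending all of these words as additional relators to the Bianchi presentation yields a finite presentation of $B(I)$.

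The main obstacle is proving that this presented group is infinite. As the preamble flags, this is precisely the case where Magma's \texttt{NormalClosure} and standard coset enumeration do not terminate in feasible time or memory, and where the Monoid Automata Factory (MAF)~\cite{MAF} must be invoked. The plan is to feed the presentation to MAF, have it construct a shortlex automatic structure for $B(I)$, and conclude infiniteness from the resulting word-acceptor admitting infinitely many normal forms. In parallel, one can attempt a GAP-style low-index subgroup search in the spirit of Lemma~\ref{lemma:specialCases} to exhibit a finite-index subgroup of $B(I)$ whose abelianization has positive rank, giving an independently checkable certificate of infiniteness; the very need to invoke MAF, however, suggests such a certificate may lie beyond what direct low-index enumeration can reach.

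Once $B(I)$ is known to be infinite, the equivalence in Section~\ref{sec:Prelims} yields $N(I) \subsetneq \Gamma(I) \cong \pi_1(M)$: the normal closure of all peripheral subgroups is a proper subgroup of $\pi_1(M)$. If $M$ were a link complement in $S^3$, the meridians of its components---one per cusp---would normally generate $\pi_1(M)$ and in particular would force $N(I) = \Gamma(I)$; therefore $M$ is not homeomorphic to a link complement in $S^3$.
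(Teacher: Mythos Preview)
Your overall strategy---present $B(I)$ and invoke MAF to certify it infinite---is the paper's strategy, and your concluding paragraph is correct. But one step is unnecessarily complicated and reflects a small conceptual slip.

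You propose to enumerate a transversal for the $\PSL(2,O_2)$-action on $\mathrm{P}^1(O_2/I)$, i.e.\ representatives for the cusps of $M$, and then add conjugates of $P_\infty(I)$ by each representative as relators. This is redundant: adding a word $w$ as a relator already kills its entire normal closure in the ambient group, so conjugates of $w$ contribute nothing new. More to the point, what governs how many $P_x(I)$ you must feed in is the number of cusps of the \emph{Bianchi orbifold} $Q_2$, not of $M$; since $\Q(\sqrt{-2})$ has class number $h_2=1$, every $P_x$ is $\PSL(2,O_2)$-conjugate to $P_\infty$, hence $N(I)$ is simply the normal closure of $P_\infty(I)$. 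The paper exploits this: with the Swan generators $a,t,u$ it takes the $\Z$-basis $\{19,\,6-\psi\}$ of $I$ (equivalent to your $\{1+3\psi,\,-6+\psi\}$), giving
\[
B(I)=\bigl\langle a,t,u \;\big|\; a^2,\,(ta)^3,\,(au^{-1}au)^2,\,[t,u],\,t^{19},\,t^6u^{-1}\bigr\rangle,
\]
a five-generator, six-relator presentation that MAF handles in about two hours. Your bloated presentation with dozens of extra (redundant) relators would define the same group, but might well push MAF past what it can process. So: drop the cusp enumeration, add only the two translations generating $P_\infty(I)$, and you have exactly the paper's proof.
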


\begin{proof}
From \cite{Sw} (also see Section~\ref{sec:presentationBianchi}), we have the following presentation for 
$$\PSL(2,O_2)=\langle a,t,u | a^2=(ta)^3=(au^{-1}au)^2=tut^{-1}u^{-1}=1\rangle$$
where $$a=\begin{pmatrix} 0 & 1 \\ -1 & 0 \end{pmatrix},\quad t = \begin{pmatrix}1 & 1\\ 0 & 1\end{pmatrix},\quad \mbox{ and} \quad u = \begin{pmatrix}1 & \sqrt{-2}\\ 0 & 1\end{pmatrix}.$$
The group $P_\infty(\langle 1+3\sqrt{-2}\rangle)$ introduced in Section~\ref{sec:Prelims} is generated by the two parabolic elements $t^6u^{-1}$ and $t^{19}$. Since $\Q(\sqrt{-2})$ has class number $h_2=1$, the normal closure of these two elements in $\PSL(2,O_2)$ is $N_2(\langle 1+3\sqrt{-2}\rangle)$. Hence adding these two elements to the above presentation yields
$$B(\myLangle 1+3\sqrt{-2}\myRangle)=\frac{\PSL(2,O_2)}{N_2(\langle 1+3\sqrt{-2}\rangle)} = \langle a,t,u | a^2=(ta)^3=(au^{-1}au)^2=tut^{-1}u^{-1}=t^6u^{-1}=t^{19}=1\rangle.$$
We give this presentation of $B(\myLangle 1+3\sqrt{-2}\myRangle)$ to MAF \cite{MAF} in form of a file \texttt{myGroup}:
\begin{center}
\begin{minipage}{15cm}
\begin{verbatim}
_RWS := rec(
    isRWS := true,
    generatorOrder := [_g1,_g2,_g3,_g4,_g5],
    inverses :=       [_g1,_g3,_g2,_g5,_g4],
    ordering := "shortlex",
    equations := [
        [_g2*_g1*_g2,_g1*_g3*_g1],
        [_g1*_g5*_g1*_g4,_g5*_g1*_g4*_g1],
        [_g2*_g4,_g4*_g2],
        [_g2^4,_g4*_g3^2],
        [_g2^10,_g3^9] ]);
\end{verbatim}
\end{minipage}
\end{center} and then call (which takes about 2 hours of time on a MacBook pro with a 2.6Ghz Intel Core i5):
\begin{center}
\begin{minipage}{15cm}
\begin{verbatim}
$ automata -no-kb myGroup         # find automatic structure
$ gpaxioms myGroup                # verify automatic structure for correct group
[...]
Checking relation _g2*_g4=_g4*_g2
Checking relation _g2^4=_g4*_g3^2
Checking relation _g2^10=_g3^9
Axiom check succeeded.
$ fsacount myGroup.wa             # count words accepted by word aceptor
The accepted language is infinite
\end{verbatim}
\end{minipage}
\end{center}
Since \texttt{automata} always finds a word acceptor automaton that accepts exactly one word for any group element, this proves $B(\myLangle 1+3\sqrt{-2}\myRangle)$ to be infinite. From the discussion in Section~\ref{sec:Prelims}, we deduce that $\Gamma(\langle 1+3\sqrt{-2}\rangle)$ is not a link group. 
\end{proof}

We are very grateful to Alun Williams who helped us implement the above proof.

\clearpage

\part{Using the method by the second author} \label{part:matthias}

\section{Introduction}

In order to obtain a triangulation of a principal congruence manifold $M=\H^3/\Gamma(I)$, the second author has written two computer programs available at \cite{goerner:data}. As described in Section~\ref{sec:diriBianchi}, the first of %%AR: changed this to these
these programs can compute a Dirichlet domain for $\PSL(2,O_d)$. This serves as input to the second program described in Section~\ref{sec:prinCongTrig} to construct the triangulation of $M$ given an ideal $I$. 

This triangulation can be given to SnapPy \cite{SnapPy} for further examination. To show that particular $M$ is a link complement, we find Dehn-fillings trivializing the fundamental group of $M$, see Section~\ref{sec:linkCompCert}. We can use the homology $H_1(M)$ to show that $M$ is not a link complement, see Section~\ref{sec:homOfCov}. Since computing $H_1(M)$ can be prohibitively expensive in some cases, we also enabled the program to construct a triangulation of the congruence manifold $M_1=\H^3/\Gamma_1(I)$.

We conclude this part with some remarks on how the triangulations were simplified before computing their homology in Section~\ref{sec:technicalRemarks}. We also want to point out that many of the homology groups could only be determined due to SnapPy's efficient implementation of homology.

\section{Overview diagrams} \label{sec:overviewDiagrams}

This section shows the overview diagrams for the finitely many cases we need to consider. The diagrams for class number $h_d=1$ and for higher class numbers $h_d>1$ are slightly different. Hence, we split them up into two sections, each beginning with a brief explanation how to read the diagrams.

\subsection{Class number one}

Recall that the ideal $\myLangle x\myRangle$ is the same when multiplying $x$ by a unit and that complex conjugation only flips the orientation of the principal congruence manifold. Hence, we only need to consider generators $x$ lying in the first quadrant or a $\pi/4$-, respectively, $\pi/6$-wedge for $d=1$ or $3$. Furthermore, by \cite[Theorem~2.2]{bgr18:AllPrinCong}, we only need to consider those generators lying strictly within the circle of radius 6.

For each generator $x$, the diagram either indicates
\begin{itemize}
\item that $M=\H^3/\Gamma(\myLangle x\myRangle)$ is a link complement or
\item gives the reason why $M$ is not a link complement which can be
\begin{itemize}
\item that $M$ is an orbifold
\item that the homology $H_1(M)/\imath_*(H_1(\partial M))$ (where $\imath:\partial M \to M$ is the inclusion of the boundary) is non-trivial, e.g., $\Z^5$ for $I=\myLangle 3+3\sqrt{-1}\myRangle$, or
\item one of the lemmas in Section~\ref{sec:casesInsuffHom}.
\end{itemize}
\end{itemize}

\myFigures{

\begin{figure}[h]
\begin{center}
\overviewFigureSize
\input{overviewFigures_gen/discriminant1}
\end{center}
\caption{$d=1$ (compare to Table~\ref{tbl:results1}). \label{fig:disc1}}
\end{figure}

\begin{figure}
\begin{center}
\overviewFigureSize
\input{overviewFigures_gen/discriminant2}
\end{center}
\caption{$d=2$ (compare to Table~\ref{tbl:results2}). \label{fig:disc2}}
\end{figure}

\begin{figure}
\begin{center}
\overviewFigureSize
\input{overviewFigures_gen/discriminant3}
\end{center}
\caption{$d=3$ (compare to Table~\ref{tbl:results3}). \label{fig:disc3}}
\end{figure}

\begin{figure}
\begin{center}
\overviewFigureSize
\input{overviewFigures_gen/discriminant7}
\end{center}
\caption{$d=7$ (compare to Table~\ref{tbl:results7}). \label{fig:disc7}}
\end{figure}

\begin{figure}
\begin{center}
\overviewFigureSize
\input{overviewFigures_gen/discriminant11}
\end{center}
\caption{$d=11$ (compare to Table~\ref{tbl:results11}). \label{fig:disc11}}
\end{figure}

\begin{figure}
\begin{center}
\overviewFigureSize
\input{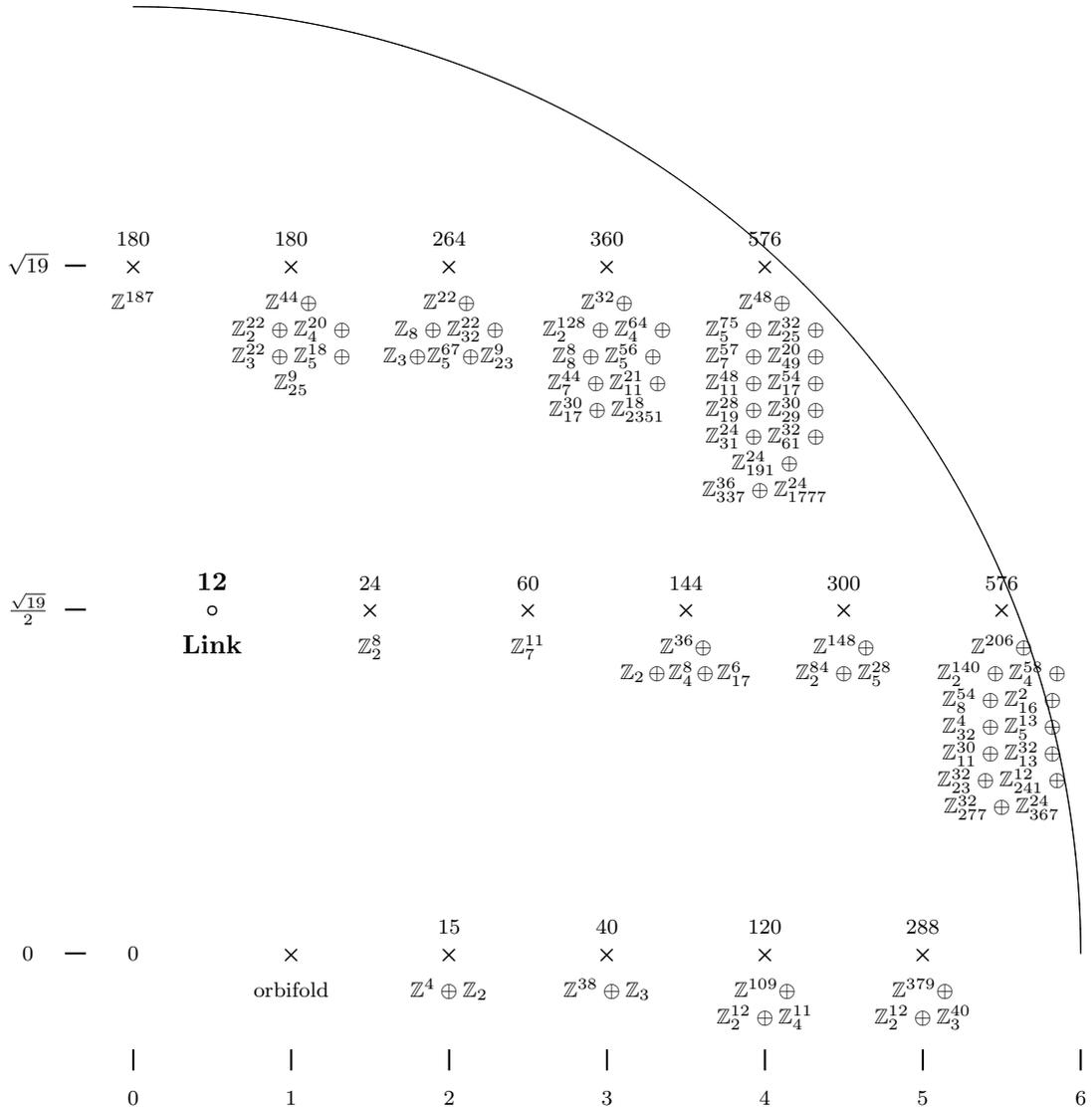}
\end{center}
\caption{$d=19$ (compare to Table~\ref{tbl:results19}). \label{fig:disc19}}
\end{figure}

}

\clearpage

\subsection{Higher class numbers}
For higher class numbers $h_d>1$, there are non-principal ideals $I$ requiring two generators $I=\langle x, y\rangle$. For such an ideal $I$, we always pick as primary generator an element $x\in I$ with the smallest absolute value $|x|$. Considerations similar to $h_d=1$ apply and we can pick $x$ to be in the first quadrant lying strictly within the circle of radius $\sqrt{39}$. The secondary generator is the element $y$ with the smallest absolute value generating $I$ together with $x$, preferable in the first quadrant, but always in the first or second quadrant. In the diagrams, each ideal corresponds to a box. Boxes for ideals having the same primary generator $x$ are grouped together.

Such a box either indicates that 
\begin{itemize}
\item $M=\H^3/\Gamma(I)$ is a link complement (also giving the number of components) or
\item gives the reason why $M$ is not a link complement which can be
\begin{itemize}
\item that $M$ is an orbifold
\item that the homology $H_1(M)/\imath_*(H_1(\partial M))$ is non-trivial or
\item an argument from Section~\ref{sec:homOfCov} as illustrated by Examples~\ref{example:gamma1Degree}, \ref{example:coverIncl}, and \ref{example:coverInclAndGamma1}.
\end{itemize}
\end{itemize}

The arguments from Section~\ref{sec:homOfCov} are needed here, since, unlike for $h_d=1$, computing the homology for all cases in question was infeasible.

\myFigures{

\begin{figure}[h]
\begin{center}
\overviewFigureSize
\input{overviewFiguresHigher_gen/discriminant5}
\end{center}
\caption{$d=5$ (compare to Table~\ref{tbl:results5}). \label{fig:disc5}}
\end{figure}

\begin{figure}[h]
\begin{center}
\overviewFigureSize
\input{overviewFiguresHigher_gen/discriminant6}
\end{center}
\caption{$d=6$ (compare to Table~\ref{tbl:results6}). \label{fig:disc6}}
\end{figure}

\begin{figure}[h]
\begin{center}
\overviewFigureSize
\input{overviewFiguresHigher_gen/discriminant15}
\end{center}
\caption{$d=15$ (compare to Table~\ref{tbl:results15}). \label{fig:disc15}}
\end{figure}

\begin{figure}[h]
\begin{center}
\overviewFigureSize
\input{overviewFiguresHigher_gen/discriminant23}
\end{center}
\caption{$d=23$ (compare to Table~\ref{tbl:results23}). \label{fig:disc23}}
\end{figure}

\begin{figure}[h]
\begin{center}
\overviewFigureSize
\input{overviewFiguresHigher_gen/discriminant31}
\end{center}
\caption{$d=31$ (compare to Table~\ref{tbl:results31}). \label{fig:disc31}}
\end{figure}

\begin{figure}[h]
\begin{center}
\overviewFigureSize
\input{overviewFiguresHigher_gen/discriminant39}
\end{center}
\caption{$d=39$ (compare to Table~\ref{tbl:results39}). \label{fig:disc39}}
\end{figure}

\begin{figure}[h]
\begin{center}
\overviewFigureSize
\input{overviewFiguresHigher_gen/discriminant47}
\end{center}
\caption{$d=47$ (compare to Table~\ref{tbl:results47}). \label{fig:disc47}}
\end{figure}

\begin{figure}[h]
\begin{center}
\overviewFigureSize
\input{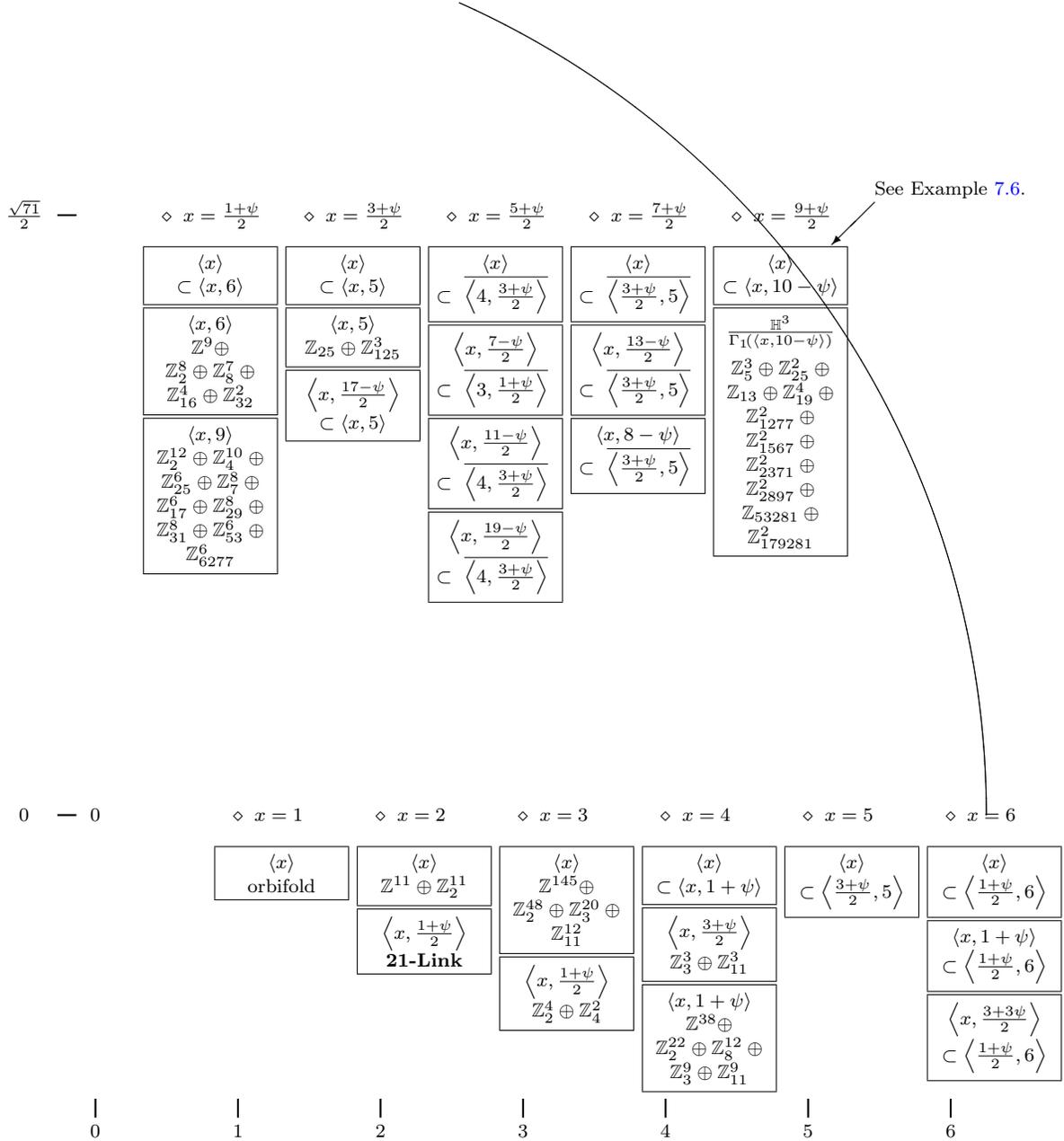}
\end{center}
\caption{$d=71$ (compare to Table~\ref{tbl:results71}).\label{fig:disc71}}
\end{figure}

}

\clearpage

\section{Link complement certificates} \label{sec:linkCompCert}

To prove $M=\H^3/\Gamma$ to be a link complement in the 48 necessary cases, we produce a SnapPy triangulation of each $M$ with meridians set in such a way that filling along each meridian trivializes the fundamental group. These peripheral curves were found using techniques similar to \cite[Section 7.3.2]{goerner:regTessLinkComps} and we provide the SnapPy triangulations at \cite[\href{http://unhyperbolic.org/prinCong/prinCong/LinkComplementCertificates/}{\texttt{prinCong/LinkComplementCertificates/}}]{goerner:data}. Thus, the reader can verify the result by $(1,0)$-filling each cusp and checking that SnapPy's simplified presentation of the resulting fundamental group has no generators. The same directory also contains the script \href{http://unhyperbolic.org/prinCong/prinCong/LinkComplementCertificates/proveLinkComplement.py}{\texttt{proveLinkComplement.py}} to do this automatically for all 48 cases.

\section{Homology and covering spaces} \label{sec:homOfCov}

Let $M$ be a compact, orientable $3$-manifold with boundary consisting of disjoint tori. Let $\imath:\partial M\to M$ be the inclusion of the boundary. We often work with $H_1(M)/\imath_*(H_1(\partial M))$ which can easily be computed from $H_1(M)$ since \cite[Lemma~9.6]{goerner:regTessLinkComps} states that $H_1(M)=H_1(M)/\imath_*(H_1(\partial M)) \oplus \Z^c$ where $c$ is the number of boundary components of $M$. If $H_1(M)/\imath_*(H_1\partial(M))\not=0$, then $M$ cannot be a link complement. Furthermore, small covers of $M$ often cannot be link complements either:

\begin{lemma} \label{lemma:minCovDegree}
A cover $N\to M$ with degree less than $|H_1(M)/\imath_*(H_1(\partial M))|$ has non-trivial $H_1(N)/\imath_*(H_1(\partial N))$.
\end{lemma}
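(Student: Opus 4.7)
The plan is to use a standard fact about subgroups of finite index combined with the naturality of the map $H_1(N)/\imath_*(H_1(\partial N))\to H_1(M)/\imath_*(H_1(\partial M))$ induced by the covering $p:N\to M$. Throughout, write $\bar H_1(X)=H_1(X)/\imath_*(H_1(\partial X))$ for brevity.

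First I would check that $p$ induces a well-defined homomorphism $\bar p_*:\bar H_1(N)\to \bar H_1(M)$. Because $N\to M$ is a cover of compact orientable 3-manifolds with torus boundary, the restriction of $p$ to $\partial N$ is a covering of $\partial M$, so each peripheral element of $\pi_1(N)$ is sent to a peripheral element of $\pi_1(M)$. Hence $p_*$ carries $\imath_*(H_1(\partial N))$ into $\imath_*(H_1(\partial M))$ and descends to the desired $\bar p_*$.

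Next I would establish the elementary group-theoretic fact: if $H\le \Gamma$ has index $d$ and $f:\Gamma\to A$ is a surjection onto an abelian group $A$, then $[A:f(H)]\le d$. Indeed, choosing coset representatives $\gamma_1,\dots,\gamma_d$ for $H$ in $\Gamma$ gives $A=f(\Gamma)=\bigcup_i f(\gamma_i)f(H)$, so $A/f(H)$ has at most $d$ elements. Applying this to $\Gamma=\pi_1(M)$, $H=p_*\pi_1(N)$, and the surjection $f:\pi_1(M)\twoheadrightarrow H_1(M)\twoheadrightarrow\bar H_1(M)$, I conclude that the image of $f\circ p_*:\pi_1(N)\to\bar H_1(M)$ has index at most $d$ in $\bar H_1(M)$. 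Since this image coincides with $\mathrm{Im}(\bar p_*)$, we get $[\bar H_1(M):\mathrm{Im}(\bar p_*)]\le d$.

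Finally I argue by contrapositive. Suppose $\bar H_1(N)=0$; then $\bar p_*$ is the zero map, so $\mathrm{Im}(\bar p_*)$ is trivial, and the index bound forces $|\bar H_1(M)|\le d$, contradicting the hypothesis $d<|\bar H_1(M)|$ (which, when $\bar H_1(M)$ is infinite, is read as $d<\infty$ versus $|\bar H_1(M)|=\infty$, again incompatible). Hence $\bar H_1(N)$ must be non-trivial. I do not anticipate any serious obstacle: the only subtlety is verifying that peripheral subgroups of the cover map into peripheral subgroups of the base so that $\bar p_*$ is defined, but this is immediate from $p$ restricting to a cover on each boundary component.
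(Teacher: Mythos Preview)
Your proof is correct and follows essentially the same approach as the paper's: both hinge on the observation that peripheral elements of $N$ map to peripheral elements of $M$ (so that, in the paper's language, $\ker(f_N)\subset\ker(f_M)$, or equivalently in yours, $\bar p_*$ is well-defined), followed by an elementary index comparison. The paper phrases the final step via kernels (if $\bar H_1(N)=0$ then $\Gamma=\ker(f_N)\subset\ker(f_M)$, forcing $[\pi_1(M):\Gamma]\ge|\bar H_1(M)|$) while you phrase it via images, but these are dual formulations of the same argument.
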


\begin{proof}
Consider the map $$f_M:\pi_1(M)\twoheadrightarrow H_1(M)\twoheadrightarrow \frac{H_1(M)}{\imath_*(H_1(\partial M))}.$$ A cover $N\to M$ corresponds to a subgroup $\Gamma\subset\pi_1(M)$ and we have an analogous map
$$f_N:\Gamma\cong \pi_1(N)\twoheadrightarrow H_1(N)\twoheadrightarrow \frac{H_1(N)}{\imath_*(H_1(\partial N))}.$$
Note that $[\pi_1(M):\ker(f_M)]=|H_1(M)/\imath_*(H_1(\partial M))|$ and that the degree of $N\to M$ is given by $[\pi_1(M):\Gamma]$, so if $\Gamma\subset\ker(f_M)$, then the degree of $N\to M$ would be at least $|H_1(M)/\imath_*(H_1(\partial M))|$. Hence, there is a $\gamma\in\Gamma$ with $\gamma\not\in\ker(f_M)$.\\
Assume that $H_1(N)/{\imath_*(H_1(\partial N))}=0$. Note that the $\ker(f_M)$ is the normal closure of all peripheral curves and commutators in $\pi_1(M)$. Similarly, $\ker(f_N)$ is the normal closure of all peripheral curves and commutators in $\Gamma\cong\pi_1(N)$ and, since $\Gamma\subset\pi_1(M)$, $\ker(f_N)\subset \ker(f_M)$. Hence $\gamma\not\in\ker(f_N)$. A contradiction.
\end{proof}

Let $\idealNorm(I)=|O_d/I|$ denote the norm of an ideal $I$.

\begin{lemma} \label{lemma:gamma1CoverDegree}
Let $M_1=\H^3/\Gamma_1(I)$ be a congruence manifold (i.e., $\Gamma_1(I)$ is torsion-free). If $$\left| \frac{H_1(M_1)}{\imath_*(H_1(\partial M_1))}\right| > \idealNorm(I),$$
then $\Gamma(I)$ is not a link group.
\end{lemma}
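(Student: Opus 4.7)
The plan is to exploit the covering relationship between $M=\H^3/\Gamma(I)$ and $M_1=\H^3/\Gamma_1(I)$ and apply Lemma~\ref{lemma:minCovDegree}.

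First I would observe that since $\Gamma(I)\subset \Gamma_1(I)$, the manifold $M$ is a finite cover of $M_1$ of degree $[\Gamma_1(I):\Gamma(I)]$. Both manifolds are genuine hyperbolic $3$-manifolds with torus cusps because $\Gamma_1(I)$ and hence $\Gamma(I)$ are assumed torsion-free, so Lemma~\ref{lemma:minCovDegree} is applicable.

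Next I would compute the covering degree. By definition $\Gamma_1(I)=\pi^{-1}(P)$ and $\Gamma(I)=\ker(\pi)$, so $\Gamma_1(I)/\Gamma(I)\cong P$, where $P$ is the group of upper unit-triangular matrices in $\PSL(2,O_d/I)$. A matrix $\bigl(\begin{smallmatrix} 1 & a \\ 0 & 1 \end{smallmatrix}\bigr)$ with $a\in O_d/I$ is never equivalent modulo $\pm\mathrm{Id}$ to a distinct such matrix (its diagonal fixes the sign), so the $\idealNorm(I)$ choices for $a$ yield $\idealNorm(I)$ distinct classes in $\PSL(2,O_d/I)$. Hence $[\Gamma_1(I):\Gamma(I)]=|P|=\idealNorm(I)$, so the cover $M\to M_1$ has degree exactly $\idealNorm(I)$.

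Now I apply Lemma~\ref{lemma:minCovDegree} with base $M_1$ and cover $M$. The hypothesis of the current lemma says the degree $\idealNorm(I)$ is strictly less than $|H_1(M_1)/\imath_*(H_1(\partial M_1))|$, so the lemma concludes that $H_1(M)/\imath_*(H_1(\partial M))$ is non-trivial. In particular $M$ cannot be a link complement in $S^3$, since for any link complement this quotient would vanish. Therefore $\Gamma(I)\cong \pi_1(M)$ is not a link group.

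The only real subtlety is checking the degree computation, and in particular verifying that the $\pm\mathrm{Id}$ quotient does not collapse the upper unit-triangular subgroup --- which is immediate from looking at the diagonal. Everything else is a direct invocation of the previous lemma, so I do not expect any serious obstacle.
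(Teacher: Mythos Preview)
Your proof is correct and follows exactly the paper's approach: the paper's proof is the single line ``The degree of the cover $M\to M_1$ is $|P|=\idealNorm(I)$,'' with the application of Lemma~\ref{lemma:minCovDegree} left implicit. You have simply supplied the details the paper omits (the computation of $|P|$ and the observation that the $\pm\mathrm{Id}$ quotient does not collapse the unipotent subgroup).
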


\begin{proof}
The degree of the cover $M\to M_1$ is $|P|=\idealNorm(I)$.
\end{proof}

\begin{example} \label{example:gamma1Degree}
For $(31, \myLangle\sqrt{-31}\myRangle)$, computing $\homologyH_1(M)/\imath_*(\homologyH_1(\partial M))$ was infeasible. However, it can be proven that $M$ is not a link complement by computing $|\homologyH_1(M_1)/\imath_*(\homologyH_1(\partial M_1))|$ which is infinite (see Figure~\ref{fig:disc31}) and thus larger than $\idealNorm(I)=31$. A similar argument can be used for $(47, \myLangle 5\myRangle)$, see Figure~\ref{fig:disc47}, and $(71, \myLangle (9+\sqrt{-71})/2, 10 - \sqrt{-71}\myRangle)$, see Figure~\ref{fig:disc71}.
\end{example}

\begin{lemma} \label{lemma:prinCongCoverDegree}
Let $M=\H^3/\Gamma(I)$ be a principal congruence manifold (i.e., $\Gamma(I)$ is torsion-free). If $J\subset I$ is an ideal such that $$\left| \frac{H_1(M)}{\imath_*(H_1(\partial M))}\right| > \frac{|\PSL(2,O_d/J)|}{|\PSL(2,O_d/I)|},$$
then $N=\H^3/\Gamma(J)$ is not a principal congruence link complement.
\end{lemma}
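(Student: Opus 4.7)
The plan is to apply Lemma~\ref{lemma:minCovDegree} to the natural cover $N=\H^3/\Gamma(J) \to M=\H^3/\Gamma(I)$ and match the right-hand side of the displayed inequality with the degree of that cover. First I would observe that $J\subset I$ forces $\Gamma(J)\subset \Gamma(I)$: a matrix whose reduction modulo $J$ is the identity has, \emph{a fortiori}, identity reduction modulo $I$. Since $\Gamma(I)$ is torsion-free by hypothesis, so is $\Gamma(J)$, ensuring that $N$ is a manifold with toroidal boundary as required by Lemma~\ref{lemma:minCovDegree}.

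Next I would compute the covering degree $[\Gamma(I):\Gamma(J)]$. Because $\Gamma(I)$ and $\Gamma(J)$ are the kernels of the surjections $\PSL(2,O_d)\twoheadrightarrow \PSL(2,O_d/I)$ and $\PSL(2,O_d)\twoheadrightarrow \PSL(2,O_d/J)$ respectively, the tower of indices yields
$$[\Gamma(I):\Gamma(J)] \;=\; \frac{[\PSL(2,O_d):\Gamma(J)]}{[\PSL(2,O_d):\Gamma(I)]} \;=\; \frac{|\PSL(2,O_d/J)|}{|\PSL(2,O_d/I)|},$$
which is exactly the quantity compared to $|H_1(M)/\imath_*(H_1(\partial M))|$ in the hypothesis.

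With this identification, Lemma~\ref{lemma:minCovDegree} immediately implies that $H_1(N)/\imath_*(H_1(\partial N))$ is non-trivial, and hence $N$ cannot be homeomorphic to a link complement in $S^3$; in particular it is not a principal congruence link complement. The argument is essentially a repackaging of Lemmas~\ref{lemma:minCovDegree} and~\ref{lemma:gamma1CoverDegree} in the principal-congruence setting, so there is no genuine obstacle; the one step to verify carefully is the index calculation above, which is routine.
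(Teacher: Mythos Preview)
Your proof is correct and follows essentially the same approach as the paper: compute the degree of the cover $N\to M$ as the ratio $|\PSL(2,O_d/J)|/|\PSL(2,O_d/I)|$ and then invoke Lemma~\ref{lemma:minCovDegree}. Your write-up is in fact more explicit than the paper's, spelling out why $\Gamma(J)\subset\Gamma(I)$ and why $N$ is a manifold, but the underlying argument is identical.
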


\begin{proof}
$M$ is a cover of the Bianchi orbifold $Q_d=\H^3/\PSL(2,O_d)$ with covering group $\PSL(2,O_d/I)$. Thus, the degree of the cover $\H^3/\Gamma(J)\to \H^3/\Gamma(I)$ is given by the right-hand side of the inequality.
\end{proof}

We can compute the right hand side of the inequality in Lemma~\ref{lemma:prinCongCoverDegree} using Equation~\ref{eqn:PslSizeEquation} in Section~\ref{sec:containsPslSizeEquation}.

\begin{example} \label{example:coverIncl}
Let $d=5$, $J=\myLangle 4+2\sqrt{-5}\myRangle$ and $I=\myLangle 2\myRangle$. The right hand side of the inequality of Lemma~\ref{lemma:prinCongCoverDegree} is $15552/48=324$, but $\left| H_1(M)/\imath_*(H_1(\partial M))\right|$ is infinite (see Figure~\ref{fig:disc5}), so $N$ is not a link complement.
\end{example}

\begin{example} \label{example:coverInclAndGamma1}
Let $d=71$, $I=\myLangle (9+\sqrt{-71})/2, 10-\sqrt{-71}\myRangle$ and $J=\myLangle (9+\sqrt{-71})/2\myRangle$. $\H^3/\Gamma(J)$ is not a principal congruence link complement by Lemma~\ref{lemma:minCovDegree} since $\H^3/\Gamma(J)\to\H^3/\Gamma(I)\to\H^3/\Gamma_1(I)=M_1$ has degree $6\cdot 19=114$ which is smaller than $\left| H_1(M_1)/\imath_*(H_1(\partial M_1))\right|$, see Figure~\ref{fig:disc71}.
\end{example}

\section{Computing Dirichlet domains of Bianchi orbifolds} \label{sec:diriBianchi}

For constructing triangulations of (principal) congruence manifolds in Section~\ref{sec:prinCongTrig}, we need a triangulated fundamental domain for $\PSL(2,O_d)$ with the extra information specified in Section~\ref{sec:fundPoly}. Computer programs to compute fundamental domains have been written before, notably by Riley \cite{Riley:PoincareTheoremFundamental} and more recently by Page \cite{page:ArithKlein}. Unfortunately, the published data (e.g., \cite{Sw}) do not include all the information we needed, at least not in a form that was easily computer parsable.

Hence, the second author implemented his own program (using SageMath \cite{Sage}) to produce a Regina \cite{Regina} triangulation of a Bianchi orbifold with suitable $\PSL(2,O_d)$-matrix annotations. The program could, in theory, produce a non-trivial covering space of the Bianchi orbifold $Q_d=\H^3/\PSL(2,O_d)$ instead of $Q_d$ itself. We can, however, easily rule this out since the volume of the Bianchi orbifold $Q_d$ is known for the $d$ we need to consider.

During the implementation, we noticed that the Dirichlet domain in the Klein model can be scaled by $\sqrt{d}$ in one direction such that all coordinates of the vertices become rational. Proving this is the motivation for discussing the computation of Dirichlet domains in the detail we do here.

\subsection{Data for a fundamental domain for a Bianchi group} \label{sec:fundPoly}

The data we need about the fundamental domain for $\PSL(2,O_d)$ is the combinatorics of a fundamental polyhedron $P$ for the Bianchi group $\PSL(2,O_d)$ together with the following information for each face $f$ of $P$:
\begin{itemize}
\item another face $f'$ of $P$ called the mate face
\item the face-pairing matrix $g_f\in\PSL(2,O_d)$ such that $g_f f'=f$
\item for each (finite or ideal) vertex $v$ of $f$ the corresponding vertex $v'$ of $f'$ with $g_f v'=v$,
\item for each edge of $P$, the singular order the edge has in the Bianchi orbifold $Q_d=\H^3/\PSL(2,O_d)$ (1 if the edge is non-singular in $Q_d$).
\end{itemize}

For simplicity, we triangulate $P$ by taking the barycentric subdivision. We index the vertices of the resulting simplices such that vertex $i$ of a simplex corresponds to the center of an $i$-cell of $P$. This results in a triangulation where the gluing permutations are always the identity.

Each simplex $\Delta_j$ of the barycentric subdivision has a ``mate'' simplex and a mating matrix $g_j\in\PSL(2,O_d)$ that takes face $3$ of the simplex to face $3$ of the mate simplex. We obtain a triangulation of $Q_d$ by gluing each simplex to its mate along face $3$. This is the triangulation we store and the fundamental domain can be easily obtained by just ungluing each face $3$. Along the triangulation, we store the mating matrices $g_j\in\PSL(2,O_d)$ in a separate array. Note that the singular locus of $Q_d$ falls onto the edges of the faces with index $3$ and for face 3 of a simplex $\Delta_j$ we obtain three numbers describing the singular orders of its three edges in $Q_d$. We store these triples of natural numbers for all simplices $\Delta_j$ in a separate array as well.

We provide this information as Regina \cite{Regina} readable files at \cite[\href{http://unhyperbolic.org/prinCong/prinCong/src/bianchiOrbifold/data/}{\texttt{prinCong/src/bianchiOrbifold/data/}}]{goerner:data} with further details explained at 
\cite[\href{http://unhyperbolic.org/prinCong/prinCong/src/bianchiOrbifold/orbifoldData.py}{\texttt{prinCong/src/bianchiOrbifold/orbifoldData.py}}]{goerner:data}.

\subsection{Dirichlet domains} We use a Dirichlet domain for the Bianchi group $\PSL(2,O_d)$ as a fundamental domain. Fix a base point $p_0$ in hyperbolic space. Associate to a matrix $m$ the half space containing $p_0$ that is limited by the plane bisecting $p_0$ and the image of $p_0$ under the action of $m$. From a sample of matrices in $\PSL(2,O_d)$, we obtain a candidate polyhedron $P$ for the Dirichlet domain by intersecting the half spaces associated to the matrices. Each face $f$ of $P$ comes from the intersection with a half space associated with a matrix $m$ which will become the face-pairing matrix $g_f$. If we can consistently recover the information described in Section~\ref{sec:fundPoly}, $P$ is the fundamental domain for a (hopefully trivial) cover of the Bianchi orbifold $Q_d$. In other words, we need to check that for each face $f$ of $P$, there is a face $f'$ with matrix $g_{f'}={g_{f}}^{-1}$ which will become the mate face. Furthermore, for each face $f$ an
 d each vertex $v$ of $f$, we need to find a vertex $v'$ of $P$ with $g_fv'=v$.

It is convient to let $p_0$ be the origin $0$ in the Klein or Poincar\'e ball model. Unfortunately, there are matrices in $\PSL(2,O_d)$ that fix the origin. But we can pick a suitable matrix $l\in\PSL(2,\Q(\sqrt{-d}))$ and let $m'=l^{-1}ml$ instead of $m\in\PSL(2,O_d)$ act on ${\Bbb H}^3$ or $B^3$.

\subsection{Poincar\'e extension for the Poincar\'e ball}

Let $\bf H $ denote Hamilton's quaternions and ${\Bbb H}^3=\{z+tj : z\in\C, t>0\}\subset {\bf H}$ and $B^3=\{x+yj+zk:x^2+y^2+z^2<1\}$ be the upper half space, respectively, Poincar\'e ball model of hyperbolic 3-space. There is an action of suitable $2\times 2$ matrices with quaternions as coefficients on ${\bf H} \cup\{\infty\}$ given by
$$T:\left(\begin{array}{cc}a&b\\c&d\end{array}\right) \mapsto \left(w \mapsto (aw+b)\cdot(cw+d)^{-1}\right).$$
If $m\in\PSL(2,\C)$, then $T(m)_{|{\Bbb H}^3}$ is an isometry of ${\Bbb H}^3$. Furthermore, letting
$$m_{{\Bbb H}^3\to B^3}=\left(\begin{array}{rr}1&-j\\1&j\end{array}\right)\quad\mbox{and}\quad m_{B^3\to{\Bbb H^3}}=\left(\begin{array}{rr}1&1\\j&-j\end{array}\right),$$
$T(m_{{\Bbb H}^3\to B^3})$ and $T(m_{B^3\to{\Bbb H}^3})$ convert between ${\Bbb H}^3$ and $B^3$.
Thus, $T(m_{{\Bbb H}^3\to B^3}\cdot m \cdot m_{B^3\to{\Bbb H}^3})_{|B^3}$ is the isometry of the Poincar\'e ball model $B^3$ corresponding to $m$.

\subsection{Hyperbolic midpoint and conversion to Klein model}

It is convenient to work in the Klein model since hyperbolic half spaces become Euclidean half spaces (intersected with the unit ball).

When converting between the Klein and the Poincar\'e ball model, we do so such that the origin and the boundary of the unit ball are fixed.

\begin{figure}[h]
\begin{center}
\includegraphics[width=6cm]{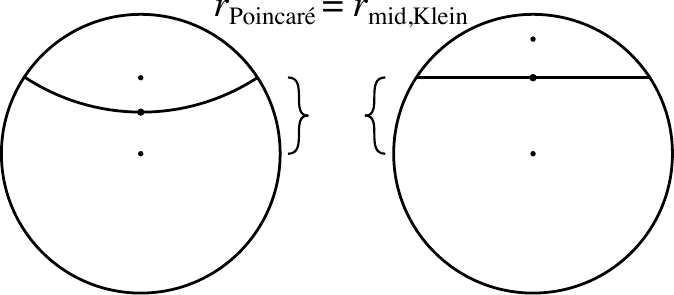}
\end{center}
\caption{Taking the midpoint in the Poincar\'e model and converting it to the Klein model gives the same Euclidean point.\label{fig:PoincareKlein}}
\end{figure}

\begin{lemma}
Let $p$ be the point in the Poincar\'e ball model with Euclidean coordinates $(x_p, y_p, z_p)$. The result of taking the hyperbolic midpoint between $p$ and the origin and then converting that midpoint to the Klein model also has coordinates $(x_p,y_p,z_p)$, see Figure~\ref{fig:PoincareKlein}. Thus, the plane bisecting $p$ and the origin has equation $x_px+y_py+z_pz=x_p^2+y_p^2+z_p^2$ in the Klein model.\label{lemma:KleinPoincareMidpoint}
\end{lemma}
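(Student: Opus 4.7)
The plan is to split the lemma into the two assertions it makes and handle them in order: first compute the hyperbolic midpoint and its image in the Klein model, and then pin down the equation of the bisecting plane. By rotational invariance around the origin (which commutes with both the Poincar\'e--to--Klein conversion and the hyperbolic midpoint construction), I would reduce to the radial situation $p = (r,0,0)$ with $r = \sqrt{x_p^2 + y_p^2 + z_p^2}$ and argue coordinate-wise at the end.

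First I would use the standard formula that a point at Euclidean radius $\rho$ in the Poincar\'e ball lies at hyperbolic distance $2\,\mathrm{arctanh}(\rho)$ from the origin. The midpoint therefore has Poincar\'e radius $r_m = \tanh\!\bigl(\tfrac12\mathrm{arctanh}(r)\bigr)$, which the half-angle identity rewrites as $r_m = r/(1+\sqrt{1-r^2})$. Converting to Klein along the same ray uses $\rho_K = 2\rho/(1+\rho^2)$. Setting $s = \sqrt{1-r^2}$, a short computation gives $r_m^2 = (1-s)/(1+s)$, hence $1 + r_m^2 = 2/(1+s)$ and $2 r_m = 2r/(1+s)$, so $r_K = r$. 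Since the ray from the origin through $p$ is preserved under both maps, the Klein coordinates of the midpoint are exactly $(x_p, y_p, z_p)$, establishing the first claim.

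For the plane equation I would then argue directly with the Klein distance formula
\[
\cosh d(u,v) \;=\; \frac{1 - u\cdot v}{\sqrt{(1-|u|^2)(1-|v|^2)}}.
\]
Writing $p_K = \tfrac{2}{1+|p|^2}p$ for the Klein representative of $p$, the bisector $\{q : d(0,q) = d(p_K,q)\}$ reduces to $(1 - p_K\cdot q)^2 = 1 - |p_K|^2$, and of the two algebraic roots only $1 - p_K\cdot q = \sqrt{1-|p_K|^2}$ can hold for $q$ in the ball. Substituting the identity $1 - |p_K|^2 = (1-|p|^2)^2/(1+|p|^2)^2$ and clearing denominators converts this to the stated equation $p\cdot q = |p|^2$, and one immediately checks it passes through $(x_p, y_p, z_p)$, consistent with the first part.

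The main obstacle is not conceptual but bookkeeping: the lemma phrases the plane equation in terms of the \emph{Poincar\'e} coordinates of $p$, while the distance computation naturally involves $p_K$, so one has to keep the two representatives cleanly separated and rely on the identity $|p_K|^2 = 4|p|^2/(1+|p|^2)^2$ at the right moment. Given the radial reduction, everything else is routine half-angle and rational algebra.
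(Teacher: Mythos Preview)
Your argument is correct. For the first claim, your computation is essentially the paper's: the paper simply observes that the Klein-to-Poincar\'e radius conversion $r_P = r_K/(1+\sqrt{1-r_K^2})$ and the Poincar\'e midpoint formula $r_{m,P} = r_P/(1+\sqrt{1-r_P^2})$ have the \emph{same} functional form, so ``take midpoint, then convert Poincar\'e to Klein'' returns the original radius. You derive the midpoint formula from $\tanh$ half-angle and then invert the conversion explicitly, which is the same content written out.

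For the plane equation, you actually do more than the paper. The paper's proof stops after the midpoint claim; the ``Thus'' in the statement leans on the implicit facts that hyperbolic hyperplanes are Euclidean hyperplanes in the Klein model, that the bisector contains the midpoint $(x_p,y_p,z_p)$, and that rotational symmetry about the ray $0p$ forces it to be Euclidean-orthogonal to that ray. Your direct computation from the Klein distance formula is a genuinely different route: it bypasses the symmetry argument and the need to cite that Klein hyperplanes are flat, at the cost of introducing $p_K$ and tracking the identity $1-|p_K|^2=(1-|p|^2)^2/(1+|p|^2)^2$. Both are fine; the paper's implicit argument is shorter, yours is more self-contained.
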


\begin{proof}
Let $r_{\mathrm{Klein}}$ and $r_{\mathrm{Poincar\acute{e}}}$ be the Euclidean distance of the origin to a point in the Klein model, respectively, the corresponding   point in the Poincar\'e ball model.
We have $$r_{\mathrm{Poincar\acute{e}}}=\frac{r_{\mathrm{Klein}}}{1+\sqrt{1-r_{\mathrm{Klein}}^2}}.$$
Note that this is the same relationship we have between the Euclidean distance $r_{\mathrm{Poincar\acute{e}}}$ of a point in Poincar\'e ball model and $r_{\mathrm{mid, Poincar\acute{e}}}$ of the hyperbolic midpoint between that point and the origin:
$$r_{\mathrm{mid, Poincar\acute{e}}}=\frac{r_{\mathrm{Poincar\acute{e}}}}{1+\sqrt{1-r_{\mathrm{Poincar\acute{e}}}}}.$$
Thus, we have $r_{\mathrm{mid,Klein}}=r_{\mathrm{Poincar\acute{e}}}$.
\end{proof}

\subsection{Rational plane equation}

\begin{lemma}
Let $m\in \PSL(2,\Q(\sqrt{-d}))$. Let $(x_p,y_p,z_p)$ be the Euclidean coordinates of the image of the origin $0$ in the Poincar\'e ball model $B^3$ under the action of $m$. Then, $x_p, y_p\in\Q$ and $z_p\in \sqrt{d}\,\Q$. Thus, in the Klein model, the equation for the plane associated to $m$ has rational coefficients when replacing $z$ by $\sqrt{d}\,z'$ in Lemma~\ref{lemma:KleinPoincareMidpoint}.
\end{lemma}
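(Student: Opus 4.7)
I would take a lift $m \in \SL(2, \Q(\sqrt{-d}))$ with $ad - bc = 1$ and compute the image of $0 \in B^3$ via the upper half space, using $T(m_{B^3\to\mathbf{H}^3})(0) = j$, so that $m \cdot 0$ equals the Cayley transform of $T(m)(j) \in \mathbf{H}^3$. The first step is a direct quaternion computation: applying $j\alpha = \bar\alpha j$ for $\alpha \in \C$ and $j^2 = -1$ to the product below should yield
\begin{equation*}
T(m)(j) \;=\; (aj + b)(cj + d)^{-1} \;=\; \frac{(a\bar c + b\bar d) + (ad-bc)\,j}{|c|^2 + |d|^2} \;=\; \frac{(a\bar c + b\bar d) + j}{|c|^2 + |d|^2}.
\end{equation*}
Writing $a\bar c + b\bar d = s + t'\sqrt{-d}$ with $s, t' \in \Q$ and $N = |c|^2 + |d|^2 \in \Q_{>0}$, this point in $\mathbf{H}^3 = \{u + vi + tj\}$ has coordinates $u = s/N \in \Q$, $v = t'\sqrt{d}/N \in \sqrt{d}\,\Q$, and $t = 1/N \in \Q$.

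Next I would apply the Cayley transform $w \mapsto (w - j)(w + j)^{-1}$ to this point; expanding and using $(u + vi)\,j = uj + vk$ gives
\begin{equation*}
x_p = \frac{u^2 + v^2 + t^2 - 1}{u^2 + v^2 + (t+1)^2}, \quad y_p = \frac{-2u}{u^2 + v^2 + (t+1)^2}, \quad z_p = \frac{-2v}{u^2 + v^2 + (t+1)^2}.
\end{equation*}
Because $v^2 \in d\,\Q$, the common denominator lies in $\Q$; together with $u, t \in \Q$ and $v \in \sqrt{d}\,\Q$ this proves $x_p, y_p \in \Q$ and $z_p \in \sqrt{d}\,\Q$. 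The final assertion about the plane equation then follows immediately from Lemma~\ref{lemma:KleinPoincareMidpoint}: the bisecting plane has Klein equation $x_p x + y_p y + z_p z = x_p^2 + y_p^2 + z_p^2$, and substituting $z = \sqrt{d}\,z'$ turns the coefficient of $z'$ into $z_p\sqrt{d} \in d\,\Q \subset \Q$, while the right-hand side already lies in $\Q$ since $z_p^2 \in d\,\Q$.

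The main obstacle is the quaternion bookkeeping in expanding $(aj+b)(cj+d)^{-1}$: one must apply $j\alpha = \bar\alpha j$ consistently and verify that the coefficient of $j$ in the numerator collapses exactly to $ad - bc$ (which is $1$ by the normalization), leaving $a\bar c + b\bar d$ as the $\C$-part of the numerator. After that the argument reduces to the purely formal observation that an element of $\Q(\sqrt{-d})$ splits as a rational real part plus a $\sqrt{d}$-rational imaginary part, and that this splitting is preserved all the way through the Cayley-transform computation.
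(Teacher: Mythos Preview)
Your proposal is correct and follows essentially the same route as the paper's proof: compute $T(m)(j)$ in $\H^3$, observe it lies in $\Q + i\sqrt{d}\,\Q + j\Q$, then push through the Cayley transform $T(m_{\H^3\to B^3})$. Your version is in fact more explicit than the paper's, which simply states ``Applying the conversion $T(m_{\H^3\to B^3})$ now gives the result'' without writing out the formulas for $x_p,y_p,z_p$; your expansion of $(w-j)(w+j)^{-1}$ fills in exactly that step.
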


\begin{proof} Let $$m=\left(\begin{array}{cc}a&b\\c&d\end{array}\right).$$
The image of the origin in $B^3$ is given by $(x_p,y_p,z_p)=T(m_{{\Bbb H}^3\to B^3}\cdot m \cdot m_{B^3\to{\Bbb H}^3})(0)$. Note that the origin in $B^3$ corresponds to $j$ in ${\Bbb H}^3$ and a standard
calculation gives:
\begin{equation} \label{eqn:PoincareExt}
T(m)(z+tj)=\left((az+b)(\overline{cz+d})+a\overline{c}t^2+tj\right)/|c(z+tj)+d|^2,
\end{equation}
 so $$T(m)(j)=(b\overline{d}+a\overline{c}+j)/|cj+d|^2\in \Q +i\sqrt{d} \Q + j\Q.$$ Applying the conversion $T(m_{{\Bbb H}^3\to B^3})$ now gives the result.
\end{proof}

Unfortunately, we do need to deal with a further quadratic extension of $\Q(\sqrt{d})$ when verifying the correspondences between the vertices $v$ and $v'$ of a face $f$ and its mate face $f'$.

\section{Triangulations of (principal) congruence manifolds} \label{sec:prinCongTrig}

Let $I$ be an ideal in $O_d$. In this section we describe how to construct a triangulation of $M=\H^3/\Gamma(I)$, respectively, $M_1=\H^3/\Gamma_1(I)$ using copies of the triangulated fundamental polyhedron $P$ of $\PSL(2,O_d)$ from the data in Section~\ref{sec:fundPoly}.

\subsection{Principal congruence manifolds}

We label each copy of $P$ by a matrix $m \in \PSL(2, O_d/I )$. We use the following algorithm:
\begin{enumerate}
\item Start with a ``base'' copy $P_{\Id}$.
\item While there is a copy $P_m$ with an unglued face $f$:
\begin{enumerate}
\item Compute $m'=mg_f\in \PSL(2, O_d/I )$.
\item If there is no copy $P_{m'}$ yet, create one.
\item Glue face $f$ of $P_m$ to the mate face $f'$ of $P_{m'}$ such that the vertices are matching as described in the information about the fundamental polyhedron.
\end{enumerate}
\end{enumerate}
In the implementation, we store the $P_m$ in an array and use a dictionary mapping matrices $m\in\PSL(2,O_d/I)$ to an index in the array for fast lookups.

To determine whether $\H^3/\Gamma(I)$ is an orbifold, we can compare the degrees of the edges of the resulting triangulation with the degrees of the edges of the triangulation of $Q_d$ multiplied by the respective orbifold orders stored with the triangulation of $Q_d$.

\subsection{Congruence manifolds $\H^3/\Gamma_1(I)$}

Note that $\H^3/\Gamma_1(I)$ is obtained from $\H^3/\Gamma(I)$ as quotient by the action of upper, respectively, lower unit-triangular matrices. In other words, each copy of $P$ in the triangulation of $\H^3/\Gamma_1(I)$ is labeled by a vector $v\in (O_d/I)^2/\pm 1$ which corresponds to the first row of a matrix $m \in \PSL(2, O_d/I )$. When computing the label $v'\in (O_d/I)^2/\pm 1$ for a neighboring copy, we need a lift of $v$ to $m \in \PSL(2, O_d/I )$ so that $v'$ is given as first row of $mg_f$. For efficiency, we remember such a matrix $m\in \PSL(2,O_d/I)$ for each copy $P_v$ of $P$. In other words, we store pairs $(P_v,m)$ in an array and use a dictionary mapping $v\in (O_d/I)^2/\pm 1$ to an index in the array for fast lookups.

Thus, the algorithm becomes:
\begin{enumerate}
\item Start with a ``base'' copy $P_{(1,0)}$.
\item While there is a copy $P_v$ with an unglued face $f$:
\begin{enumerate}
\item Let $m$ be the matrix stored with $P_v$. Compute $m'=mg_f\in \PSL(2, O_d/I )$ and let $v'$ be the first row of $m'$.
\item If there is no copy $P_{v'}$ yet, create a copy $P_{v'}$ and store $m'$ with it.
\item Glue face $f$ of $P_v$ to the mate face $f'$ of $P_{v'}$ such that the vertices are matching as described in the information about the fundamental polyhedron.
\end{enumerate}
\end{enumerate}

\subsection{Calculations in $O_d/I$} Conceptually, a polyhedron is labeled by an elements in $m\in\PSL(2,O_d/I)$. In the implementation, we label a polyhedron by a canonical representative, i.e., by a canonical matrix with coefficients in $O_d$ representing $m$. For this, we need a procedure to reduce any representative in $O_d$ of an element in $O_d/I$ to a canonical representative.

Given a 2-vector $v$, the reduced form of $v$ with respect to the vectors $v_1$ and $v_2$ is the element in $v+\Z v_1+\Z v_2$ in the parallelogram spanned by $v_1$ and $v_2$. In other words, $v$ is reduced with respect to $v_1$ and $v_2$ if $$v \left(\begin{array}{c}v_1\\v_2\end{array}\right)^{-1}\in [0,1)^2.$$
Let us associate the vector $(a,b)$ to an element $a+b\sqrt{-d}\in O_d$. Let us fix two vectors $v_1$ and $v_2$ that span $I$ as a lattice. We can then reduce a representative in $O_d$ by reducing the associated vector $(a,b)$ by $v_1$ and $v_2$.

It is left to find such $v_1$ and $v_2$ given generators $x_1, \dots, x_k\in O_d$ of the ideal. As a lattice, $I$ is spanned by the vectors associated to $x_1, x_1\omega_d, \dots, x_k, x_k\omega_d$. We need a procedure that takes such a set of vectors and returns two vectors spanning the same lattice as the input vectors. By iterating, it suffices to have a method that produces two vectors spanning the same lattice as three given vectors. This can be done by repeatedly reducing one vector by the other two vectors until one of them is zero.

\section{Technical remarks about computing homologies} \label{sec:technicalRemarks}

The triangulation of a (principal) congruence manifold produced in Section~\ref{sec:prinCongTrig} has both finite and ideal vertices and can be quiet large. Even though the result is the same, computing the homology of the unsimplified triangulation
with finite and ideal vertices is much slower than first simplifying the triangulation and then computing the homology of the simplified triangulation. For example, the largest triangulation we encountered has 1843200 simplices in the case $(31,\myLangle 5\myRangle)$). Removing finite vertices reduced the number of tetrahedra to 122704 simplices.

SnapPy \cite{SnapPy} has a procedure to remove all finite vertices of a triangulation of a cusped manifold. However, this procedure does not scale to large triangulations. Thus, we implemented our own method to simplify the triangulation:
\begin{enumerate}
\item Perform a coarsening of the barycentric subdivision: there is a group of four simplices about each edge from vertex 1 to 2; collapse all these groups to a single simplex each simultaneously.
\item Collapse edges (preferring edges with high order) for as long as there is an edge which can be collapsed without changing the topology -- similar to what the method \verb"collapseEdge" in Regina \cite{Regina} does.
\end{enumerate}
Even though this procedure might not in general remove all finite vertices, it does so for all the triangulations we needed to consider here.

Unfortunately, Regina's \verb"collapseEdge" invalidates and recomputes the entire skeleton of the triangulation each time an edge is collapsed. Hence, the above procedure would not scale to large triangulations using Regina's implementation. Therefore, we reimplemented Regina's method so that it performs a more targeted invalidation: only the edge classes near the collapsed edge recomputed and the two vertices at the ends of the collapsed edge are merged.

We use SnapPy to compute the homology of the triangulation simplified this way. For large triangulations, Dunfield and Culler implemented the homology as follows:
\begin{enumerate}
\item Using a sparse-matrix representation, simplify the matrix performing row operations as long as there is a $\pm 1$ in the matrix.
\item Compute the Smith normal form using algorithms described in \cite[Chapter~2.4]{cohen:compNumbThy} (and implemented in Pari \cite{pari}).
\end{enumerate}

\clearpage

\part{Using the method by the first and third authors} \label{part:markAndAlan}

\section{Introduction}
The main computational tool used by the first and third authors to establish when a candidate principal congruence manifold is, or is not, homeomorphic to a link complement in $S^3$ is Magma \cite{Magma}.  We refer the reader to
the papers \cite{bakerReid:prinCong} and \cite{bakerReid:higherPrinCong} for more on the background to these methods.  However, we note that, thanks to the second author, many of the Magma routines have now been
automated, and this is what is included in this report. In addition, we take the opportunity to correct some mistakes in some of the entries in the tables of \cite{bakerReid:higherPrinCong} that were uncovered whilst checking our calculations.  This did not affect the outcome of whether the principal congruence manifold was a link complement.

All files needed to reproduce the results in this part are available at \cite[\href{http://unhyperbolic.org/prinCong/prinCong/magma/}{\texttt{prinCong/magma/}}]{goerner:data}.

\section{Presentations for the Bianchi groups} \label{sec:presentationBianchi}
 
\subsection{Class number one}
The following presentations are from \cite{Sw}:
\begin{eqnarray*}
\PSL(2,O_1) & = & \grLangle a,\ell,t,u~|~ \ell^2 = (t\ell)^2 = (u\ell)^2 =
(a\ell)^2 = a^2 = (ta)^3 = (ua\ell)^3 = 1, [t,u]=1\grRangle\\
\PSL(2,O_2) & = & \grLangle a,t,u~|~a^2=(ta)^3=(au^{-1}au)^2=1, [t,u]=1\grRangle\\
\PSL(2,O_3) & = & \grLangle a,\ell,t,u~|~\ell^3=a^2=(a\ell)^2=(ta)^3=(ua\ell)^3=1,
\ell^{-1}t\ell=t^{-1}u^{-1},\ell^{-1}u\ell=t, \\
& & [t,u]=1\grRangle\\
\PSL(2,O_7) & = & \grLangle a,t,u~|~a^2=(ta)^3=(atu^{-1}au)^2=1,  [t,u]=1\grRangle\\
\PSL(2,O_{11}) & = & \grLangle a,t,u~|~a^2=(ta)^3=(atu^{-1}au)^3=1, [t,u]=1\grRangle\\
\PSL(2,O_{19}) & =  &
\grLangle a,b,t,u~|~a^2=(ta)^3=b^3=(bt^{-1})^3=(ab)^2=(at^{-1}ubu^{-1})^2=1,  
[t,u]=1\grRangle.\\
\end{eqnarray*}
The matrices are given by
$$a=\begin{pmatrix}0 &  -1\\ 1 & 0\end{pmatrix},\quad b=\begin{pmatrix}1-\omega_{19} & 2\\ 2 & \omega_{19}\end{pmatrix},\quad
 \quad
 t=\begin{pmatrix}1 &  1\\ 0 & 1\end{pmatrix},$$
 
 $$u=\begin{pmatrix}1 &  \omega_d\\ 0 & 1\end{pmatrix}~~\mbox{for } d\not=3 \quad \mbox{respectively} \quad u=\begin{pmatrix}1 &  \omega_d^2\\ 0 & 1\end{pmatrix}~~\mbox{for } d=3,$$
 and
  $$\ell = \begin{pmatrix}-i & 0 \\ 0 & i\end{pmatrix}
 ~~ \mbox{for } d=1 
  \quad \mbox{respectively} \quad
  \ell = \begin{pmatrix}1/\omega_3^2 & 0 \\ 0 & \omega_3^2\end{pmatrix}
 ~~ \mbox{for } d=3.$$
In each case, the parabolic group fixing $\infty$ corresponding to the peripheral subgroup of the one-cusped Bianchi orbifold $Q_d=\H^3/\PSL(2,O_d)$ is given by:
$$P_\infty = \myLangle t,u\myRangle.$$

\subsection{Swan's presentations for higher class numbers ($d=5, 6, 15$)}

The following presentations are from \cite{Sw}:
\begin{eqnarray*}
\PSL(2,O_5) &=& \grLangle a,t,u,b,c~|~a^2=b^2=(ta)^3=(ab)^2=1, \\
& &
(aubu^{-1})^2=acatc^{-1}t^{-1}=
ubu^{-1}cbtc^{-1}t^{-1}=1, [t,u]=1\grRangle\\
\PSL(2,O_6)&=&\grLangle a,t,u,b,c~|~a^2=b^2=(ta)^3=(atb)^3=1,\\
& &
(atubu^{-1})^3=
t^{-1}ctubu^{-1}c^{-1}b^{-1}=1, [t,u]=[a,c]=1\grRangle\\
\PSL(2,O_{15}) &=& \grLangle a,t,u,c~|~a^2=(ta)^3=1,\\
& &
ucuatu^{-1}c^{-1}u^{-1}a^{-1}t^{-1}=1, [t,u]=[a,c]=1\grRangle
\end{eqnarray*}

The matrices are given by
$$a=\begin{pmatrix}0 &  -1\\ 1 & 0\end{pmatrix}, \quad
 t=\begin{pmatrix}1 &  1\\ 0 & 1\end{pmatrix}, \quad
 u=\begin{pmatrix}1 &  \omega_d\\ 0 & 1\end{pmatrix}$$

and

$$\left.\begin{array}{c}
b=\begin{pmatrix}-\omega_5 &  2\\ 2 & \omega_5\end{pmatrix},\quad
c=\begin{pmatrix}-\omega_5-4 &  -2\omega_5\\ 2\omega_5 & \omega_5-4\end{pmatrix}
\end{array}\right\} ~~\mbox{for } d= 5$$
$$\left.\begin{array}{c}
b=\begin{pmatrix}-1-\omega_6 & 2-\omega_6\\ 2 &  1+\omega_6\end{pmatrix},\quad
c=\begin{pmatrix}5 & -2\omega_6\\ 2\omega_6 & 5\end{pmatrix}
\end{array}\right\} ~~\mbox{for } d= 6$$
$$\left.\begin{array}{c}
c=\begin{pmatrix}4 & 1-2\omega_{15}\\  2\omega_{15}-1 & 4\end{pmatrix}
\end{array}\right\} ~~\mbox{for } d= 15.$$
Up to conjugacy, all parabolic subgroups $P_x$ fixing $x$ are given by
$$\left.\begin{array}{c}
P_\infty=\myLangle t,~u\myRangle,\quad P_\frac{1-\sqrt{-5}}{2}=\myLangle tb,~tu^{-1}ct^{-1}\myRangle
\end{array}\right\} ~~\mbox{for } d= 5$$

$$\left.\begin{array}{c}
P_\infty=\myLangle t,~u\myRangle,\quad P_\frac{-\sqrt{-6}}{2}=\myLangle tb,~u^{-1}c^{-1}\myRangle
\end{array}\right\} ~~\mbox{for } d= 6$$

$$\left.\begin{array}{c}
P_\infty=\myLangle t,~u\myRangle,\quad P_\frac{1+\sqrt{-15}}{4}=\myLangle uca,~c^{-1}au^{-1}c^{-1}u^{-1}ta\myRangle
\end{array}\right\} ~~\mbox{for } d= 15$$

\subsection{Page's presentations for higher class numbers}

The presentations for the remaining class numbers were done by A.~Page using a suite of computer packages he recently developed (see  
\cite{page:ArithKlein}) to study arithmetic
Kleinian groups.  The presentations given here were communicated to the first and third authors by A.~Page.

\begin{eqnarray*}
\PSL(2,O_{23}) & = & \grLangle g_1,g_2,g_3,g_4,g_5~|~g_3^3=(g_3g_2)^2=1, \\
& &
g_5g_2^{-1}g_3^{-1}g_5^{-1}g_1^{-1}g_2^{-1}g_3^{-1}g_1=
g_4^{-1}g_5g_3g_2g_5^{-1}g_2g_4g_3=1, [g_1,g_2]=[g_4,  
g_5]=1 \grRangle \\
\PSL(2,O_{31}) & = & \grLangle g_1,g_2,g_3,g_4,g_5~|~g_2^3=(g_2g_1^{-1})^2=1, \\
& &
g_4g_1^{-1}g_3^{-1}g_2g_3g_4^{-1}g_2g_4g_3^{-1}g_1^{-1}g_2g_3g_4^{-1}g_2=  1, \\
& &
g_5g_3^{-1}g_2g_3g_4^{-1}g_2g_1^{-1}g_5^{-1}g_2^{-1}g_4g_3^{-1}g_2^{-1}g_3g_1=1, \\
& &
g_2g_3g_4^{-1}g_2g_1^{-1}g_4g_3^{-1}g_2g_3g_4^{-1}g_1g_2^{-1}g_4g_3^{-1}=1,
[g_1,g_3]=[g_4,g_5]=1 \grRangle \\
\PSL(2,O_{39}) & = & \grLangle g_1,g_2,g_3,g_4,g_5,g_6,g_7~|~g_3^3=1,\\
& &
(g_3g_5)^2=
(g_1^{-1}g_3^{-1})^2=(g_5^{-1}g_1)^3=(g_7g_5^{-1}g_7^{-1}g_1)^3= 1, \\
& &
g_5^{-1}g_1g_6^{-1}g_4^{-1}g_5g_4g_1^{-1}g_6 =
g_4^{-1}g_5g_4g_2^{-1}g_7g_5^{-1}g_7^{-1}g_2 = 1, \\
& &
g_6g_1^{-1}g_5g_6^{-1}g_4^{-1}g_5g_4g_1^{-1}g_4^{-1}g_5g_4g_1^{-1}=1,
[g_2, g_1]=[g_3^{-1}, g_7^{-1}]=[g_4, g_6]=1 \grRangle \\
\PSL(2,O_{47}) & = & \grLangle g_1,g_2,g_3,g_4,g_5,g_6,g_7~|~g_1^3=
(g_2^{-1}g_1)^2=1, \\
& &
g_2^{-1}g_1g_6g_1^{-1}g_2g_6^{-1}=
g_6g_2^{-1}g_4^{-1}g_5g_3^{-1}g_6^{-1}g_4g_2g_3g_5^{-1}= 1,\\
& &
g_7^{-1}g_2^{-1}g_5^{-1}g_4g_1g_4^{-1}g_2g_7g_4g_1^{-1}g_4^{-1}g_5=
g_3g_5^{-1}g_4g_1g_4^{-1}g_2g_5g_3^{-1}g_2^{-1}g_4^{-1}g_1^{-1}g_4=1,\\
& &
g_5^{-1}g_4g_1g_4^{-1}g_7^{-1}g_2^{-1}g_4g_1^{-1}g_4^{-1}g_5g_3^{-1}g_2g_3g_7  
= 1,
[g_5, g_7]=[g_3, g_2]=1 \grRangle \\
\PSL(2,O_{71}) & = & 
\grLangle g_1,g_2,g_3,g_4,g_5,g_6,g_7,g_8,g_9~|~g_8^3=(g_8g_7^{-1})^2=1,\\
& &
g_1^{-1}g_3g_7g_3^{-1}g_1g_7^{-1} =
g_6g_3g_6^{-1}g_7g_9^{-1}g_3^{-1}g_9g_7^{-1} =1,\\
& &
g_7^{-1}g_6g_3g_6^{-1}g_5^{-1}g_2g_7g_5g_6g_3^{-1}g_6^{-1}g_2^{-1} = 1,\\
& &
g_8g_7^{-1}g_1g_5g_6g_3^{-1}g_1g_5g_7g_8^{-1}g_5^{-1}g_1^{-1}g_3g_6^{-1}g_5^{-1}g_1^{-1}  = 1, \\
& &
g_4^{-1}g_7^{-1}g_5^{-1}g_2g_1^{-1}g_3g_7g_9g_4g_1g_7^{-1}g_2^{-1}g_5g_7g_9^{-1}g_3^{-1}  = 1,\\
& &
g_5g_8g_7^{-1}g_5^{-1}g_1^{-1}g_7g_9g_6g_1g_5g_8g_7^{-1}g_5^{-1}g_1^{-1}g_3g_6^{-1}g_9^{-1
}g_7^{-1}g_3^{-1}g_1 = 1,\\
& &
g_2g_6g_1g_5g_7g_8^{-1}g_5^{-1}g_1^{-1}g_3g_6^{-1}g_7g_8^{-1}g_5^{-1}g_2^{-1}g_5g_7g_8^{-1
}g_5^{-1}g_1^{-1}g_7g_8^{-1}g_1g_5g_6g_3^{-1}g_6^{-1} = 1,\\
& &
[g_8^{-1},g_4]=1 \grRangle
\end{eqnarray*}

The matrices (or more precisely a representative of the generator of $\PSL$) are given by 
$$
 \left.\begin{array}{c}
 g_1=\begin{pmatrix}1 &-1+\omega_{23}\\ 0 & 1\end{pmatrix},\quad  
g_2= \begin{pmatrix}1 & 1\\ 0 & 1\end{pmatrix},\quad
g_3= \begin{pmatrix}0 & 1\\ -1 & 1\end{pmatrix}\\ [\bigskipamount]
g_4=\begin{pmatrix}3+\omega_{23} & -4+\omega_{23}\\ -2+\omega_{23} &  
-1-\omega_{23}\end{pmatrix},\quad
g_5=\begin{pmatrix}5-\omega_{23} & 1+2\omega_{23}\\
            2+\omega_{23} & -3+\omega_{23}\end{pmatrix}
\end{array}\right\} ~~\mbox{for } d=23
$$

$$
\left.\begin{array}{c}
g_1 = \begin{pmatrix}1 & -1\\ 0 & 1\end{pmatrix},\quad
g_2 = \begin{pmatrix}0 & 1\\ -1 & 1\end{pmatrix},\quad
g_3 = \begin{pmatrix}1 & \omega_{31}\\ 0 & 1\end{pmatrix},\\  [\bigskipamount]
 g_4 = \begin{pmatrix}3 &  -2+2\omega_{31}\\ \omega_{31} & -5\end{pmatrix},\quad
g_5 = \begin{pmatrix}3-2\omega_{31} & 7+\omega_{31}\\ 4 &  -1+2\omega_{31}\end{pmatrix}
\end{array}\right\} ~~\mbox{for } d=31
$$

$$
\left.\begin{array}{c}
g_1= \begin{pmatrix}1 & 1\\ 0 & 1\end{pmatrix},\quad
g_2= \begin{pmatrix}1  & \omega_{39}\\ 0 & 1\end{pmatrix},\quad
g_3= \begin{pmatrix}0 & 1\\ -1 & 1\end{pmatrix}\\  [\bigskipamount]
g_4 =  \begin{pmatrix}-3-\omega_{39} & 7-2\omega_{39}\\ 2-\omega_{39} &  5+\omega_{39}\end{pmatrix},\quad
g_5 = \begin{pmatrix}3-\omega_{39} & 2+\omega_{39}\\ 3 &  -1+\omega_{39}\end{pmatrix}\\  [\bigskipamount]
g_6 = \begin{pmatrix}7-\omega_{39} & 2+3\omega_{39}\\ 2+\omega_{39} &  -5+\omega_{39}\end{pmatrix},\quad
g_7 = \begin{pmatrix}6-\omega_{39} & -1+2\omega_{39}\\ 1-2\omega_{39} &  5+\omega_{39}\end{pmatrix}
\end{array}\right\} ~~\mbox{for } d=39
$$

$$
\left.\begin{array}{c}
g_1= \begin{pmatrix}-1 & 1\\ -1 & 0\end{pmatrix},\quad
g_2 = \begin{pmatrix}1 & 1\\ 0 & 1\end{pmatrix},\quad
g_3 = \begin{pmatrix}1 & -1+\omega_{47}\\ 0 & 1\end{pmatrix}\\  [\bigskipamount]
g_4 =  \begin{pmatrix}-2+\omega_{47} & 5\\ -3 & 1+\omega_{47}\end{pmatrix},\quad
g_5 = \begin{pmatrix}5  & -3+3\omega_{47}\\ \omega_{47} & -7\end{pmatrix}\\  [\bigskipamount]
g_6 = \begin{pmatrix}-4+\omega_{47} & 3+\omega_{47}\\ -3-\omega_{47} &  -4+\omega_{47}\end{pmatrix},\quad
g_7 = \begin{pmatrix}1-2\omega_{47} & 11+\omega_{47}\\ 4 &  -3+2\omega_{47}\end{pmatrix}
\end{array}\right\} ~~\mbox{for } d=47
$$

$$
\left.\begin{array}{c}
g_1 = \begin{pmatrix}-5 & 5-3\omega_{71}\\  -1+\omega_{71} & -10-\omega_{71}\end{pmatrix},\quad g_2=\begin{pmatrix}-3+2\omega_{71} & -17-\omega_{71}\\ -4 & 1-2\omega_{71}\end{pmatrix}\\  [\bigskipamount]
g_3= \begin{pmatrix}5 & -2\omega_{71}\\ 1-\omega_{71} & -7\end{pmatrix},\quad
g_4 = \begin{pmatrix}-5 & 2+\omega_{71}\\ -2-\omega_{71} & -3+\omega_{71}\end{pmatrix}\\  [\bigskipamount]
g_5 = \begin{pmatrix}-6-3\omega_{71} & 13-2\omega_{71}\\ 5-\omega_{71} &  4+\omega_{71}\end{pmatrix},\quad
g_6 = \begin{pmatrix}-1+2\omega_{71} & 12\\ -6 & -1+2\omega_{71}\end{pmatrix}\\  [\bigskipamount]
g_7 = \begin{pmatrix}1 & -1\\ 0 & 1\end{pmatrix},\quad
g_8 = \begin{pmatrix}0 & -1\\ 1 & -1\end{pmatrix},\quad
g_9 = \begin{pmatrix}1+\omega_{71} & -7\\ 3 & -2+\omega_{71}\end{pmatrix}
\end{array}\right\} ~~\mbox{for } d=71.
$$

All (up to conjugacy) parabolic subgroups $P_x$ fixing $x$ are given by
$$\left.\begin{array}{c}
P_\infty=\myLangle g_2,~g_1\myRangle,\quad
P_\frac{1-\sqrt{-23}}{4}=\myLangle g_4,~g_5\myRangle,\quad
P_\frac{-1-\sqrt{-23}}{4}=\myLangle g_4g_3g_2,~g_2^{-1}g_5g_3g_2\myRangle
\end{array}\right\} ~~\mbox{for } d= 23$$
$$\left.\begin{array}{c}
P_\infty=\myLangle g_1,~g_3\myRangle,\quad
P_\frac{1-\sqrt{-31}}{4}=\myLangle g_4,~g_5\myRangle,\quad
P_\frac{-1-\sqrt{-31}}{4}=\myLangle g_1g_5,~g_3^{-1}g_2g_3g_4^{-1}g_2g_5\myRangle
\end{array}\right\} ~~\mbox{for } d= 31$$

$$\left.\begin{array}{c}
P_\infty=\myLangle g_1,~g_2\myRangle,\quad
P_\frac{1-\sqrt{-39}}{4}=\myLangle g_4,~g_6\myRangle,\quad\\ [\bigskipamount]
P_\frac{1-\sqrt{-39}}{5}=\myLangle g_5^{-1}g_6,~g_4g_1^{-1}g_6\myRangle,\quad
P_\frac{3-\sqrt{-39}}{6}=\myLangle g_5,~g_4g_2^{-1}g_7\myRangle
\end{array}\right\} ~~\mbox{for } d= 39$$

$$\left.\begin{array}{c}
P_\infty=\myLangle g_2,~g_3\myRangle,\quad
P_\frac{1-\sqrt{-47}}{4}=\myLangle g_5,~g_7\myRangle,\quad
P_\frac{3-\sqrt{-47}}{4}=\myLangle g_2g_7,~g_4g_1^{-1}g_4^{-1}g_5\myRangle,\\  [\bigskipamount]
P_\frac{1-\sqrt{-47}}{6}=\myLangle g_6g_2^{-1}g_4^{-1},~g_5g_3^{-1}g_2^{-1}g_4^{-1}\myRangle,\quad
P_\frac{1+\sqrt{-47}}{6}=\myLangle g_6^{-1}g_1^{-1}g_4,~g_3g_5^{-1}g_4g_1\myRangle
\end{array}\right\} ~~\mbox{for } d= 47$$

$$\left.\begin{array}{c}
P_\infty=\myLangle g_7,~g_1^{-1}g_3\myRangle,\quad
P_\frac{1-\sqrt{-71}}{4}=\myLangle g_2,~g_6g_1g_5g_7g_8^{-1}g_5^{-1}g_1^{-1}g_3g_6^{-1}g_7g_8^{-1}g_5^{-1}\myRangle,\\  [\bigskipamount]
P_\frac{1+\sqrt{-71}}{6}=\myLangle g_3,~g_6^{-1}g_7g_9^{-1}\myRangle,\quad
P_\frac{-1-\sqrt{-71}}{4}=\myLangle g_7g_2,~g_6g_3g_6^{-1}g_5^{-1}g_7^{-1}\myRangle,\\  [\bigskipamount]
P_\frac{-1+\sqrt{-71}}{6}=\myLangle g_7g_9g_6,~g_3^{-1}g_1g_5g_8g_7^{-1}g_5^{-1}g_1^{-1}\myRangle,\quad
P_\frac{3+\sqrt{-71}}{8}=\myLangle g_3g_9g_4,~g_4^{-1}g_7^{-1}g_5^{-1}g_2g_7g_1^{-1}\myRangle,\\  [\bigskipamount]
P_\frac{3-\sqrt{-71}}{8}=\myLangle g_4g_1g_7^{-1}g_2^{-1}g_5g_7g_9^{-1}g_3^{-1}g_8^{-1}g_4^{-1},~g_6g_3^{-1}g_1g_5g_8g_7^{-1}g_5^{-1}g_1^{-1}g_6^{-1}g_9^{-1}g_8^{-1}g_4^{-1}\myRangle
\end{array}\right\} ~~\mbox{for } d= 71.$$

\section{Finding $N(I)$ and $B(I)$}

Given an ideal $I$, we need to find words in the generators of $\PSL(2,O_d)$ that generate the $P_x(I)$. Let $(n,k,l)$ be a triple of integers such that $n$ and $k+l\omega_d$ generate the ideal $I$ as a lattice: $I=n\Z+(k+l\omega_d)\Z$. The functions to generate $B(I)$ are given in the file \href{http://unhyperbolic.org/prinCong/prinCong/magma/GroupB.m}{\texttt{GroupB.m}}.

\subsection{Class number one} \label{sec:findBIClassNumberOne} $P_\infty$ is generated by $t^n$ and $t^k u^l$. The following Magma code gives us the $N(I)$ as defined in Section~\ref{sec:Prelims} for the case $d=2$ and $I=\myLangle 1+\sqrt{-2}\myRangle = 3\Z + (1+\sqrt{-2})\Z$ giving rise to $(n,k,l)=(3,1,1)$:
\begin{center}
\begin{minipage}{13cm}
\begin{verbatim}
Bianchi2<a,t,u> := Group<a,t,u|a^2,(t*a)^3,(a*u^-1*a*u)^2,(t,u)>;

function N2(n, k, l)
    P := sub<Bianchi2|t^n, t^k * u^l>;
    N := NormalClosure(Bianchi2, P);
    N := Rewrite(Bianchi2, N); // For optimization
    return N;
end function;

N2(3,1,1);
\end{verbatim}
\end{minipage}
\end{center}
A presentation for $B(I)$ can be obtained by simply adding the generators of $P_\infty$ as relations to $\PSL(2,O_d)$. Thus, using the function in Table~\ref{tbl:magma2}, $B(I)$ for the same case is obtained by the Magma code \texttt{B2(3,1,1)}.

Note that for $d=3$, we need to pick the triple $(n,k,l)$ such that $I=n\Z + (k+l\omega_3^2)\Z$.

For the cases $(d,I)$ that needed to be considered, the tables in Section~\ref{sec:tablesClassNumberOne} show the arguments $n, k, l$ passed to the Magma functions to generate the $B(I)$.

\subsection{Higher class numbers $h_d$ with $d\not\equiv 3\myMod 4$} Let us denote the parabolic subgroups and their generators given in Section \ref{sec:presentationBianchi} by $P_{(1)}=\myLangle p_{(1),1}, p_{(1),2}\myRangle,\dots,P_{(h_d)}=\myLangle p_{(h_d),1}, p_{(h_d),2}\myRangle$. For $d=5$ and $d=6$, the generators were chosen so that $P_{(i)}(I)$ is generated by $p_{(i),1}^n$ and $p_{(i),1}^k p_{(i),2}^l$. Thus, the Magma functions to generate $N(I)$ and $B(I)$ similarly only need the triple $(n,k,l)$ as input, see Tables~\ref{tbl:magma5} and ~\ref{tbl:magma6}.

\subsection{Higher class numbers $h_d$ with $d\equiv 3\myMod 4$} \label{sec:findBIHigherClassNumber} Unfortunately, we cannot find generators $p_{(i),j}$ for the parabolic subgroups of $\PSL(2,O_d)$ such that each $P_{(i)}(I)$ is again always generated by $p_{(i),1}^n$ and $p_{(i),1}^k p_{(i),2}^l$. Instead, the Magma functions (e.g., in Tables~\ref{tbl:magma15} and \ref{tbl:magma39}) generating $N(I)$ and $B(I)$  take  triples $(n_1,k_1,l_1)$, \dots, $(n_{h_d},k_{h_d},l_{h_d})$ that must be chosen such that $\myLangle p_{(i),1}^{n_i}, p_{(i),1}^{k_i} p_{(i),2}^{l_i}\myRangle=P_{(i)}(I)$. These triples are shown in, e.g., Tables~\ref{tbl:results15} and \ref{tbl:results39}.

These triples were determined by computing the matrices for the $p_{(i),j}$. In \cite{bakerReid:prinCong} and \cite{bakerReid:higherPrinCong} these were computed manually.  However, this has been automated, and 
we provide code for SageMath \cite{Sage} to verify that the triples given in these tables are valid. Given a triple $(n_i,k_i,l_i)$ with $n_i, l_i>0$ and an ideal $I$, the code will check that
\begin{enumerate}
\item $p_{(i),1}^{n_i} \equiv \pm \Id \myMod I$ and $p_{(i),1}^{k_i} p_{(i),2}^{l_i}\equiv \pm \Id \myMod I$ and
\item for any $(s,t)\in \Z^2\setminus \{(0,0)\}$ with $0 \leq t < l_i$ and $t k_i  / l_i \leq s < n_i + t k_i  / l_i$, we have $$p_{(i),1}^{s} p_{(i),2}^{t}\not\equiv \pm \Id\myMod I.$$
\end{enumerate}
For a triple $(n_i,k_i,l_i)$ with $n_i>0$ and $l_i<0$, the code will flip the signs of $k_i$ and $l_i$ before performing the above checks.

\begin{example}
Running the command \texttt{sage -python \href{http://unhyperbolic.org/prinCong/prinCong/magma/checkMatricesAndPeripherals15.py}{checkMatricesAndPeripherals15.py}} will output \texttt{True} 15 times to indicate that the triples in Table~\ref{tbl:results15} for the 15 relevant ideals $I\subset O_{15}$  are valid. 
\end{example}

\clearpage

\section{Tables} \label{sec:overviewTables}

Recall from Section~\ref{sec:Prelims} that it is sufficient to show $|B(I)|>|\PSL(2,O_d/I)|$ to rule out $M=\H^3/\Gamma(I)$ as a link complement. The same section also contained Equation~\ref{eqn:PslSizeEquation} allowing us to compute $|\PSL(2,O_d/I)|$. To obtain $|B(I)|$ or a lower bound for it, we use Magma's builtin \texttt{Order} or one of the functions in Table~\ref{tbl:magmaHelpers}. This works for all cases except the special ones treated in Section~\ref{sec:casesInsuffHom}.

\begin{table}[h]
\caption{Magma functions to give a lower bound for the size of a group.\label{tbl:magmaHelpers}}
\begin{center}
\begin{minipage}{14cm}
\begin{verbatim}
function LowerBound1(G, index)
    L := LowIndexNormalSubgroups(G, index);
    m := Max([ Index(G,H`Group) * Order(AbelianQuotient(H`Group)) : H in L]); 
    return m;
end function;

function LowerBound2(G, index, generator)
    g := G.generator;
    G := ReduceGenerators(G);       // for optimization
    L := LowIndexNormalSubgroups(G, index);
    for N in L do
        if Index(G,N`Group) eq index then
            H := sub<G | N`Group, g>;
            return Index(G, H) * Order(AbelianQuotient(H));
        end if;
    end for;
end function;
\end{verbatim}
\end{minipage}
\end{center}
\end{table}

Given a subgroup $H$ of a given group $G$, a lower bound on $|G|$ is given by the product of the index $[G:H]$ and the order of $H$'s Abelianization. \texttt{LowerBound1} returns the best lower bound obtained this way when considering all normal subgroups $H$ up to a given index. \texttt{LowerBound2} considers the subgroup $H$ generated by $g$ and the normal subgroup of given index that Magma found first\footnote{The order in which Magma's \texttt{LowIndexNormalSubgroups} returns the groups is not guaranteed to be deterministic or stable between different Magma versions. However, in all our invocations of \texttt{LowerBound2}, there is exactly one normal subgroup of the given index, so our results are reproducable.}. Here, $g$ is a generator of $G$ specified by its index.

We give some examples how the use of these functions is encoded in the following tables:
\begin{itemize}
\item For $(7,\myLangle 3\myRangle)$, the Magma code \texttt{Order(B7(3,0,3))} shows that $|B(I)|=1080$ which is larger than $|\PSL(2,O_d/I)|=360$, so $\H^3/\Gamma(I)$ is not a link complement. See Table~\ref{tbl:results7}.
 \item For $(1, \myLangle 4+2\sqrt{-1}\myRangle)$, the Magma code \texttt{LowerBound1(B1(10,4,2), 2880)} returns 92160 as lower bound for $|B(I)|$ which is larger than $|\PSL(2,O_d/I)|=2880$, so $\H^3/\Gamma(I)$ is not a link complement either. If the index given to $\texttt{LowerBound1}$ is $|\PSL(2,O_d/I)|$, we do not specify the arguments to the function in the table. See Table~\ref{tbl:results1}. 
 \item For $(15,\myLangle 5\myRangle)$, the Magma code \texttt{LowerBound1(B5(5,0,5, 5,0,5), 5)} returned \texttt{Infinity} showing this case not to be a link complement. See Table~\ref{tbl:results15}.
 \item For $(15,\myLangle 4 + \sqrt{-15}\myRangle)$, the Magma code \texttt{LowerBound2(B5(31,-14,1, 31,13,1), 14880, 2)} was used. The group $H$ used was generated the normal subgroup of index 14880 and the second generator of $B(I)$, namely $t$. See Table~\ref{tbl:results15}.
 \item For $(1,\myLangle 4+4\sqrt{-1}\myRangle)$, we use that $|B(J)|\geq |B(I)|$ if $J\subset I$. See Table~\ref{tbl:results1}.
 \end{itemize}

For each discriminant, we provide a Magma file to check all the necessary cases at \cite[\href{http://unhyperbolic.org/prinCong/prinCong/magma/}{\texttt{prinCong/magma/}}]{goerner:data}. For example, the following shell command
\begin{center}
\begin{minipage}{14cm}
\begin{verbatim}
magma GroupB.m NotLinkComplementHelpers.m NotLinkComplement7.m
\end{verbatim}
\end{minipage}
\end{center}
or Magma commands
\begin{center}
\begin{minipage}{14cm}
\begin{verbatim}
load "GroupB.m";
load "NotLinkComplementHelpers.m";
load "NotLinkComplement7.m";
\end{verbatim}
\end{minipage}
\end{center}
replay the necessary computations for $d=7$:
\begin{center}
\begin{minipage}{14cm}
\begin{verbatim}
> Order(B7(3,0,3));
1080
> LowerBound1(B7(8,2,2), 1152);
Infinity
> LowerBound1(B7(4,0,4), 1152);
Infinity
> LowerBound1(B7(6,0,3), 2160);
836828256240
...
\end{verbatim}
\end{minipage}
\end{center}

\clearpage

\subsection{Class number one} \label{sec:tablesClassNumberOne}
Note that $|B(I)|=|\PSL(2,O_d/I)|$ for $(1, \myLangle1+\sqrt{-1}\myRangle)$, $(2, \myLangle \sqrt{-2}\myRangle)$, and $(3,\myLangle\frac{3+\sqrt{-3}}{2}\myRangle)$, but $\Gamma(I)$ contains torsion elements so we obtain an orbifold.

\begin{table}[h]
\caption{Magma function to produce $B(I)$ for $d=1$.\label{tbl:magma1}}
\begin{center}
\begin{minipage}{12cm}
\begin{verbatim}
function B1(n1,k1,l1)
    return Group<a,l,t,u|a^2,l^2,(t*l)^2,(u*l)^2,(a*l)^2,
                         (t*a)^3,(u*a*l)^3,(t,u),
    
                         t^n1,t^k1*u^l1>;
end function;
\end{verbatim}
\end{minipage}
\end{center}
\end{table}

\begin{table}[h]
\caption{Computations for $d=1$ (compare to Figure~\ref{fig:disc1}).\label{tbl:results1}}
\begin{center}
\begin{tabular}{| c || r |r|r||c|c|c|}
\hline
Ideal & \multicolumn{1}{c|}{$n$} & \multicolumn{1}{c|}{$k$} & \multicolumn{1}{c||}{$l$} & Method & $|B(I)|$ & $|\PSL(2,O_d/I)|$ \\ \hline \hline
$\myLangle 1+\sqrtMinusDSymbolTable{1}\myRangle$ &
2 & 1 & 1 &
Orbifold & 6 &
6 \\ \hline
$\myLangle 2\myRangle$ &
2 & 0 & 2 &
\multicolumn{2}{c|}{6-Link} &
48 \\ \hline
$\myLangle 2+1 \sqrtMinusDSymbolTable{1}\myRangle$ &
5 & 2 & 1 &
\multicolumn{2}{c|}{6-Link} & 
60 \\ \hline
$\myLangle 2+2 \sqrtMinusDSymbolTable{1}\myRangle$ &
4 & 2 & 2 &
\multicolumn{2}{c|}{12-Link} &
192 \\ \hline
$\myLangle 3\myRangle$ &
3 & 0 & 3 &
\multicolumn{2}{c|}{20-Link} &
360 \\ \hline
$\myLangle 3+1 \sqrtMinusDSymbolTable{1}\myRangle$ &
10 & 3 & 1 &
\multicolumn{2}{c|}{18-Link} &
360 \\ \hline
$\myLangle 3+2 \sqrtMinusDSymbolTable{1}\myRangle$ &
13 & -5 & 1 &
\multicolumn{2}{c|}{42-Link} &
1092 \\ \hline
$\myLangle 4 \myRangle$ &
4 & 0 & 4 &
\texttt{LowerBound1} & $\infty$ &
1536 \\ \hline
$\myLangle 3+3 \sqrtMinusDSymbolTable{1}\myRangle$ &
6 & 3 & 3 &
\texttt{LowerBound1} & $\infty$ &
2160 \\ \hline
$\myLangle 4+1 \sqrtMinusDSymbolTable{1}\myRangle$ &
17 & 4 & 1 &
\multicolumn{2}{c|}{72-Link} &
2448 \\ \hline
$\myLangle 4+2 \sqrtMinusDSymbolTable{1}\myRangle$ &
10 & 4 & 2 &
\texttt{LowerBound1} & $\geq 92160$ &
2880 \\ \hline
$\myLangle 5+1 \sqrtMinusDSymbolTable{1}\myRangle$ &
26 & 5 & 1 &
\texttt{LowerBound1} & $\geq 107347968$ &
6552 \\ \hline
$\myLangle 5 \myRangle$ &
5 & 0 & 5 &
\texttt{LowerBound1} & $\infty$ &
7200 \\ \hline
$\myLangle 4+3 \sqrtMinusDSymbolTable{1}\myRangle$ &
 &  &  &
Lemma~\ref{lemma:specialCases} & $\infty$ &
7500 \\ \hline
$\myLangle 5+2 \sqrtMinusDSymbolTable{1}\myRangle$ &
29 & 17 & 1 &
\texttt{LowerBound1} & $\infty$ &
12180 \\ \hline
$\myLangle 4+4 \sqrtMinusDSymbolTable{1}\myRangle$ &
 &  &  &
$\containedInI\myLangle 4\myRangle$ & $\infty$ &
12288 \\ \hline
$\myLangle 5+3 \sqrtMinusDSymbolTable{1}\myRangle$ &
34 & 13 & 1 &
\texttt{LowerBound1} & $\infty$ &
14688 \\ \hline
\end{tabular}
\end{center}
\end{table}

\clearpage

\begin{table}[h]
\caption{Magma function to produce $B(I)$ for $d=2$.\label{tbl:magma2}}
\begin{center}
\begin{minipage}{12cm}
\begin{verbatim}
function B2(n,k,l)
    return Group<a,t,u | a^2, (t*a)^3, (a*u^-1*a*u)^2, (t,u),
    
                         t^n, t^k*u^l>;
end function;
\end{verbatim}
\end{minipage}
\end{center}
\end{table}

\begin{table}[h]
\caption{Computations for $d=2$ (compare to Figure~\ref{fig:disc2}).\label{tbl:results2}}
\begin{center}
\begin{tabular}{| c || r |r|r||c|c|c|}
\hline
Ideal & \multicolumn{1}{c|}{$n$} & \multicolumn{1}{c|}{$k$} & \multicolumn{1}{c||}{$l$} & Method & $|B(I)|$ & $|\PSL(2,O_d/I)|$ \\ \hline \hline
$\myLangle \sqrt{-2}\myRangle$ & 2 & 0 & 1 & Orbifold & 6 & 6 \\ \hline
$\myLangle 1+\sqrt{-2}\myRangle$ & 3 & 1 & 1 & \multicolumn{2}{c|}{4-Link} & 12 \\ \hline
$\myLangle 2\myRangle$ & 2 & 0 & 2 & \multicolumn{2}{c|}{12-Link} & 48 \\ \hline
$\myLangle 2+\sqrt{-2}\myRangle$ & 6 & 2 & 1 & \multicolumn{2}{c|}{12-Link} & 72 \\ \hline
$\myLangle 2\sqrt{-2}\myRangle$ & 4 & 0 & 2 & \texttt{LowerBound1} & $\infty$ & 192 \\ \hline
$\myLangle 3\myRangle$ & 3 & 0 & 3 & \texttt{LowerBound1} & $\infty$ & 288 \\ \hline
$\myLangle 1+2\sqrt{-2}\myRangle$ & 9 & -4 & 1 & \multicolumn{2}{c|}{36-Link} & 324 \\ \hline
$\myLangle 2+2\sqrt{-2}\myRangle$ & 6 & 2 & 2 & \texttt{LowerBound1}& $\geq 18432 $ & 576 \\ \hline
$\myLangle 3+\sqrt{-2}\myRangle$ & 11 & 3 & 1 & \multicolumn{2}{c|}{60-Link} & 660 \\ \hline
$\myLangle 4\myRangle$ & 4 & 0 & 4 & \texttt{LowerBound1} & $\infty$ & 1536\\ \hline
$\myLangle 3\sqrt{-2}\myRangle$ & & & & $\containedInI\myLangle 3\myRangle$ & $\infty$ & 1728 \\ \hline
$\myLangle 4+\sqrt{-2}\myRangle$ & 18 & 4 & 1 & \texttt{LowerBound1} & $\infty$ & 1944 \\ \hline
$\myLangle 3+2\sqrt{-2}\myRangle$ & 17 & -7 & 1 & \texttt{LowerBound1} & $\geq 1253376$ & 2448 \\ \hline
$\myLangle 1+3\sqrt{-2}\myRangle$ & 19 & 6 & -1 & Lemma~\ref{lemma:nasty} & $\infty$ & 3420 \\ \hline
$\myLangle 2+3\sqrt{-2}\myRangle$ & 22 & 8 & 1 & \texttt{LowerBound1} & $\infty$ & 3960 \\ \hline
$\myLangle 4+2\sqrt{-2}\myRangle$ & &&& $\containedInI\myLangle 2-2\sqrt{-2}\myRangle$ & $\geq 18432$ & 4608 \\ \hline
$\myLangle 3+3\sqrt{-2}\myRangle$ & &&& $\containedInI\myLangle 3\myRangle$ & $\infty$ & 7776 \\ \hline
$\myLangle 5\myRangle$ & 5 & 0 & 5 & \texttt{LowerBound1} & $\infty$ & 7800 \\ \hline
$\myLangle 5+\sqrt{-2}\myRangle$ & 27 & 5 & 1 & \texttt{LowerBound1} & $\geq 3.462\cdot 10^{40}$ & 8748 \\ \hline
$\myLangle 4\sqrt{-2}\myRangle$ & &&& $\containedInI\myLangle 2\sqrt{-2}\myRangle$ & $\infty$ & 12288 \\ \hline
$\myLangle 4+3\sqrt{-2}\myRangle$ & &&& $\containedInI\myLangle 3-2\sqrt{-2} \myRangle$ & $\geq 1253376$ & 14688 \\ \hline
$\myLangle 1+4\sqrt{-2}\myRangle$ & 33 & -8 & 1 & \texttt{LowerBound1} & $\infty$ & 15840 \\ \hline
$\myLangle 5+2\sqrt{-2}\myRangle$ & 33 & -14 & 1  & \texttt{LowerBound1} & $\infty$ & 15840 \\ \hline
\end{tabular}
\end{center}
\end{table}

\clearpage

\begin{table}[h]
\caption{Magma function to produce $B(I)$ for $d=3$.\label{tbl:magma3}}
\begin{center}
\begin{minipage}{12cm}
\begin{verbatim}
function B3(n1,k1,l1)
    return Group<a,l,t,u|(t,u),a^2,(a*l)^2,(t*a)^3,l^3,(u*a*l)^3,
                         l^-1*t*l*u*t,l^-1*u*l*t^-1,
                         
                         t^n1,t^k1*u^l1>;
end function;
\end{verbatim}
\end{minipage}
\end{center}
\end{table}

For convenience, we remind the reader that in the case of $d=3$, we use $\omega_3^2 = (-1+\sqrt{-3})/2$ in the matrix for $u$.

\begin{table}[h]
\caption{Computations for $d=3$ (compare to Figure~\ref{fig:disc3}).\label{tbl:results3}}
\begin{center}
\begin{tabular}{| c || r |r|r||c|c|c|}
\hline
Ideal & \multicolumn{1}{c|}{$n$} & \multicolumn{1}{c|}{$k$} & \multicolumn{1}{c||}{$l$} & Method & $|B(I)|$ & $|\PSL(2,O_d/I)|$ \\ \hline \hline
$\myLangle \frac{3+\sqrt{-3}}{2}\myRangle$ &
3 & -1 & 1 &
Orbifold & 12 &
12 \\ \hline
$\myLangle 2\myRangle$ &
2 & 0 & 2 &
\multicolumn{2}{c|}{5-Link} &
60 \\ \hline
$\myLangle \frac{5+ \sqrtMinusDSymbolTable{3}}{2}\myRangle$ &
7 & 3 & 1 &
\multicolumn{2}{c|}{8-Link} &
168 \\ \hline
$\myLangle 3\myRangle$ &
3 & 0 & 3 &
\multicolumn{2}{c|}{12-Link} &
324 \\ \hline
$\myLangle 3 + \sqrtMinusDSymbolTable{3}\myRangle$ &
6 & 4 & 2 &
\multicolumn{2}{c|}{20-Link} &
720 \\ \hline
$\myLangle \frac{7+\sqrt{-3}}{2}\myRangle$ &
13 & 4 & 1 &
\multicolumn{2}{c|}{28-Link} &
1092 \\ \hline
$\myLangle 4\myRangle$ &
4 & 0 & 4 &
\texttt{Order} & 3840 & 
1920 \\ \hline
$\myLangle 4+\sqrt{-3} \myRangle$ &
19 & 12 & 1 &
\multicolumn{2}{c|}{60-Link} &
3420 \\ \hline
$\myLangle \frac{9+ \sqrtMinusDSymbolTable{3}}{2}\myRangle$ &
21 & 5 & 1 &
\multicolumn{2}{c|}{64-Link} &
4032 \\ \hline
$\myLangle 5\myRangle$ &
5 & 0 & 5 &
\texttt{LowerBound1} & $\infty$ &
7800 \\ \hline
$\myLangle \frac{9+3\sqrt{-3}}{2}\myRangle$ &
9 & 6 & 3 &
\texttt{LowerBound1} & $\infty$ &
8748 \\ \hline
$\myLangle 5+ \sqrtMinusDSymbolTable{3}\myRangle$ &
14 & 6 & 2 &
\texttt{LowerBound1} & $\geq 2580480$ &
10080 \\ \hline
$\myLangle \frac{11+\sqrt{-3}}{2}\myRangle$ &
 &  &  &
Lemma~\ref{lemma:specialCases} & $\infty$ &
14880 \\ \hline
\end{tabular}
\end{center}
\end{table}

\clearpage

\begin{table}[h]
\caption{Magma function to produce $B(I)$ for $d=7$.\label{tbl:magma7}}
\begin{center}
\begin{minipage}{12cm}
\begin{verbatim}
function B7(n,k,l)
    return Group<a,t,u|a^2,(t*a)^3,(a*t*u^-1*a*u)^2,(t,u),
    
                       t^n,t^k*u^l>;
end function;
\end{verbatim}
\end{minipage}
\end{center}
\end{table}

\begin{table}[h]
\caption{Computations for $d=7$ (compare to Figure~\ref{fig:disc7}).\label{tbl:results7}}
\begin{center}
\begin{tabular}{| c || r |r|r||c|c|c|}
\hline
Ideal & \multicolumn{1}{c|}{$n$} & \multicolumn{1}{c|}{$k$} & \multicolumn{1}{c||}{$l$} & Method & $|B(I)|$ & $|\PSL(2,O_d/I)|$ \\ \hline \hline
$\myLangle \frac{1+\sqrtMinusDSymbolTable{7}}{2}\myRangle$ &
2 & 0 & 1 &
\multicolumn{2}{c|}{3-Link} &
6 \\ \hline
$\myLangle \frac{3+\sqrtMinusDSymbolTable{7}}{2}\myRangle$ &
4 & 1 & 1 &
\multicolumn{2}{c|}{6-Link} &
24 \\ \hline
$\myLangle 2\myRangle$ &
2 & 0 & 2 &
\multicolumn{2}{c|}{9-Link} &
36 \\ \hline
$\myLangle 1+\sqrtMinusDSymbolTable{7}\myRangle$ &
4 & 0 & 2 &
\multicolumn{2}{c|}{18-Link} &
144 \\ \hline
$\myLangle \sqrtMinusDSymbolTable{7}\myRangle$ &
7 & 3 & 1 &
\multicolumn{2}{c|}{24-Link} &
168 \\ \hline
$\myLangle \frac{5+\sqrtMinusDSymbolTable{7}}{2}\myRangle$ &
8 & 2 & 1 &
\multicolumn{2}{c|}{24-Link} &
192 \\ \hline
$\myLangle 3\myRangle$ &
3 & 0 & 3 &
\texttt{Order} & 1080 &
360 \\ \hline
$\myLangle 2+\sqrtMinusDSymbolTable{7}\myRangle$ &
11 & 5 & -1 &
\multicolumn{2}{c|}{60-Link} &
660 \\ \hline
$\myLangle \frac{7+\sqrtMinusDSymbolTable{7}}{2}\myRangle$ &
14 & 3 & 1 &
\multicolumn{2}{c|}{72-Link} &
1008 \\ \hline
$\myLangle 3+\sqrtMinusDSymbolTable{7}\myRangle$ &
8 & 2 & 2 &
\texttt{LowerBound1} & $\infty$ &
1152 \\ \hline
$\myLangle 4\myRangle$ &
4 & 0 & 4 &
\texttt{LowerBound1} & $\infty$ &
1152 \\ \hline
$\myLangle \frac{1+3 \sqrtMinusDSymbolTable{7}}{2}\myRangle$ &
16 & 5 & 1 &
\multicolumn{2}{c|}{96-Link} &
1536 \\ \hline
$\myLangle \frac{3+3 \sqrtMinusDSymbolTable{7}}{2}\myRangle$ &
6 & 0 & 3 &
\texttt{LowerBound1} & $\geq 8.368\cdot 10^{11}$ &
2160 \\ \hline
$\myLangle \frac{5+3 \sqrtMinusDSymbolTable{7}}{2}\myRangle$ &
22 & -7 & 1 &
\texttt{LowerBound1} & $\infty$ &
3960 \\ \hline
$\myLangle \frac{9+\sqrtMinusDSymbolTable{7}}{2}\myRangle$ &
22 & 4 & 1 &
\texttt{LowerBound1} & $\infty$ &
3960 \\ \hline
$\myLangle 2 \sqrtMinusDSymbolTable{7}\myRangle$ &
14 & 6 & 2 &
\texttt{LowerBound1} & $\infty$ &
6048 \\ \hline
$\myLangle 4+\sqrtMinusDSymbolTable{7}\myRangle$ &
23 & 13 & 1 &
\texttt{LowerBound1} & $\geq 2.546 \cdot 10^{10}$ &
6072 \\ \hline
$\myLangle 5\myRangle$ &
5 & 0 & 5 &
\texttt{LowerBound1} & $\infty$ &
7800 \\ \hline
$\myLangle \frac{7+3 \sqrtMinusDSymbolTable{7}}{2}\myRangle$ &
28 & 10 & 1 &
\texttt{LowerBound1} & $\infty$ &
8064 \\ \hline
$\myLangle 2+2 \sqrtMinusDSymbolTable{7}\myRangle$ &
 &  &  &
$\containedInI\myLangle 4\myRangle$ & $\infty$ &
9216 \\ \hline
$\myLangle 5+\sqrtMinusDSymbolTable{7}\myRangle$ &
 & & &
$\containedInI\myLangle 3-\sqrtMinusDSymbolTable{7}\myRangle$ & $\infty$ &
9216 \\ \hline
$\myLangle 1+2 \sqrtMinusDSymbolTable{7}\myRangle$ &
29 & 7 & 1 &
\texttt{LowerBound1} & $\geq 3.717 \cdot 10^{14}$ &
12180 \\ \hline
$\myLangle \frac{11+\sqrtMinusDSymbolTable{7}}{2}\myRangle$ &
32 & 5 & 1 &
\texttt{LowerBound1} & $\geq 3.494 \cdot 10^{49}$ &
12288 \\ \hline
\end{tabular}
\end{center}
\end{table}

\clearpage

\begin{table}[h]
\caption{Magma function to produce $B(I)$ for $d=11$.\label{tbl:magma11}}
\begin{center}
\begin{minipage}{12cm}
\begin{verbatim}
function B11(n,k,l)
    return Group<a,t,u|a^2,(t*a)^3,(a*t*u^-1*a*u)^3,(t,u),
    
                       t^n,t^k*u^l>;
end function;
\end{verbatim}
\end{minipage}
\end{center}
\end{table}

\begin{table}[h]
\caption{Computations for $d=11$ (compare to Figure~\ref{fig:disc11}).\label{tbl:results11}}
\begin{center}
\begin{tabular}{| c || r |r|r||c|c|c|}
\hline
Ideal & \multicolumn{1}{c|}{$n$} & \multicolumn{1}{c|}{$k$} & \multicolumn{1}{c||}{$l$} & Method & $|B(I)|$ & $|\PSL(2,O_d/I)|$ \\ \hline \hline
$\myLangle \frac{1+\sqrtMinusDSymbolTable{11}}{2}\myRangle$ &
3 & 0 & 1 &
\multicolumn{2}{c|}{4-Link} &
12 \\ \hline
$\myLangle 2\myRangle$ &
2 & 0 & 2 &
\texttt{Order} & 120 &
60 \\ \hline
$\myLangle \frac{3+\sqrtMinusDSymbolTable{11}}{2}\myRangle$ &
5 & 1 & 1 &
\multicolumn{2}{c|}{12-Link} &
60 \\ \hline
$\myLangle 3\myRangle$ &
3 & 0 & 3 &
\texttt{LowerBound1} & $\infty$ &
288 \\ \hline
$\myLangle \frac{5+\sqrtMinusDSymbolTable{11}}{2}\myRangle$ &
9 & 2 & 1 &
\multicolumn{2}{c|}{36-Link} &
324 \\ \hline
$\myLangle \sqrtMinusDSymbolTable{11}\myRangle$ &
11 & 5 & 1 &
\texttt{LowerBound1} & $\infty$ &
660 \\ \hline
$\myLangle 1+\sqrtMinusDSymbolTable{11}\myRangle$ &
6 & 0 & 2 &
\texttt{LowerBound1} & $\geq 248832$ &
720 \\ \hline
$\myLangle 2+\sqrtMinusDSymbolTable{11}\myRangle$ &
15 & 8 & 1 &
\texttt{LowerBound1} & $\geq 5.446 \cdot 10^{13}$ &
1440 \\ \hline
$\myLangle \frac{7+\sqrtMinusDSymbolTable{11}}{2}\myRangle$ &
15 & 3 & 1 &
\texttt{LowerBound1} & $\infty$ &
1440 \\ \hline
$\myLangle 4\myRangle$ &
4 & 0 & 4 &
\texttt{LowerBound1} & $\infty$ &
1920 \\ \hline
$\myLangle 3+\sqrtMinusDSymbolTable{11}\myRangle$ &
10 & 2 & 2 &
\texttt{LowerBound1} & $\infty$ &
3600 \\ \hline
$\myLangle \frac{9+\sqrtMinusDSymbolTable{11}}{2}\myRangle$ &
23 & 4 & 1 &
\texttt{LowerBound1} & $\geq 3.152 \cdot 10^{37}$ &
6072 \\ \hline
$\myLangle 5\myRangle$ &
5 & 0 & 5 &
\texttt{LowerBound1} & $\infty$ &
7200 \\ \hline
$\myLangle \frac{1+3 \sqrtMinusDSymbolTable{11}}{2}\myRangle$ &
25 & 8 & 1 &
\texttt{LowerBound1} & $\geq 1.418 \cdot 10^{48}$ &
7500 \\ \hline
$\myLangle \frac{3+3 \sqrtMinusDSymbolTable{11}}{2}\myRangle$ &
 &  &  &
$\containedInI\myLangle 3\myRangle$ & $\infty$ &
7776 \\ \hline
$\myLangle 4+\sqrtMinusDSymbolTable{11}\myRangle$ &
27 & 12 & -1 &
\texttt{LowerBound1} & $\geq 4.117 \cdot 10^{76}$ &
8748 \\ \hline
$\myLangle \frac{5+3 \sqrtMinusDSymbolTable{11}}{2}\myRangle$ &
31 & -10 & 1 &
\texttt{LowerBound1} & $\geq 3.785 \cdot 10^{145}$ &
14880 \\ \hline
$\myLangle \frac{11+\sqrtMinusDSymbolTable{11}}{2}\myRangle$ &
 &  &  &
$\containedInI\myLangle \sqrtMinusDSymbolTable{11}\myRangle$ & $\infty$ &
15840 \\ \hline
\end{tabular}
\end{center}
\end{table}

\clearpage

\begin{table}[h]
\caption{Magma function to produce $B(I)$ for $d=19$.\label{tbl:magma19}}
\begin{center}
\begin{minipage}{12cm}
\begin{verbatim}
function B19(n,k,l)
    return Group<a,b,t,u|a^2,(t*a)^3,b^3,(b*t^-1)^3,(a*b)^2,
                         (a*t^-1*u*b*u^-1)^2,(t,u),
                         
                         t^n,t^k*u^l>;
end function;
\end{verbatim}
\end{minipage}
\end{center}
\end{table}

\begin{table}[h]
\caption{Computations for $d=19$ (compare to Figure~\ref{fig:disc19}).\label{tbl:results19}}
\begin{center}
\begin{tabular}{| c || r |r|r||c|c|c|}
\hline
Ideal & \multicolumn{1}{c|}{$n$} & \multicolumn{1}{c|}{$k$} & \multicolumn{1}{c||}{$l$} & Method & $|B(I)|$ & $|\PSL(2,O_d/I)|$ \\ \hline \hline
$\myLangle 2\myRangle$ &
2 & 0 & 2 &
\texttt{LowerBound1} & $\infty$ &
60 \\ \hline
$\myLangle \frac{1+\sqrtMinusDSymbolTable{19}}{2}\myRangle$ &
5 & 0 & 1 &
\multicolumn{2}{c|}{12-Link} &
60 \\ \hline
$\myLangle \frac{3+\sqrtMinusDSymbolTable{19}}{2}\myRangle$ &
7 & 1 & 1 &
\texttt{LowerBound1} & $\geq 43008$ &
168 \\ \hline
$\myLangle 3\myRangle$ &
3 & 0 & 3 &
\texttt{LowerBound1} & $\infty$ &
360 \\ \hline
$\myLangle \frac{5+\sqrtMinusDSymbolTable{19}}{2}\myRangle$ &
11 & 2 & 1 &
\texttt{LowerBound1} & $\geq 1.305 \cdot 10^{12}$ &
660 \\ \hline
$\myLangle 4\myRangle$ &
 &  &  &
$\containedInI\myLangle 2\myRangle$ & $\infty$ &
1920 \\ \hline
$\myLangle \frac{7+\sqrtMinusDSymbolTable{19}}{2}\myRangle$ &
17 & 3 & 1 &
\texttt{LowerBound1} & $\infty$ &
2448 \\ \hline
$\myLangle \sqrtMinusDSymbolTable{19}\myRangle$ &
19 & -10 & 1 &
\texttt{LowerBound1} & $\infty$ &
3420 \\ \hline
$\myLangle 1+\sqrtMinusDSymbolTable{19}\myRangle$ &
 &  &  &
$\containedInI\myLangle 2\myRangle$ & $\infty$ &
3600 \\ \hline
$\myLangle 2+\sqrtMinusDSymbolTable{19}\myRangle$ &
23 & -11 & 1 &
\texttt{LowerBound1} & $\infty$ &
6072 \\ \hline
$\myLangle 5\myRangle$ &
5 & 0 & 5 &
\texttt{LowerBound1(B19(5,0,5), 300)} & $\infty$ &
7200 \\ \hline
$\myLangle \frac{9+\sqrtMinusDSymbolTable{19}}{2}\myRangle$ &
25 & 4 & 1 &
\texttt{LowerBound1} & $\infty$ &
7500 \\ \hline
$\myLangle 3+\sqrtMinusDSymbolTable{19}\myRangle$ &
 &  &  &
$\containedInI\myLangle 2\myRangle$ & $\infty$ &
10080 \\ \hline
$\myLangle 4+\sqrtMinusDSymbolTable{19}\myRangle$ &
 &  &  &
$\containedInI\myLangle \frac{3-\sqrtMinusDSymbolTable{19}}{2}\myRangle$ & $\geq 43008$ &
20160 \\ \hline
$\myLangle \frac{11+\sqrtMinusDSymbolTable{19}}{2}\myRangle$ &
 &  &  &
$\containedInI\myLangle \frac{3-\sqrtMinusDSymbolTable{19}}{2}\myRangle$ & $\geq 43008$ &
20160 \\ \hline
\end{tabular}
\end{center}
\end{table}

\clearpage

\subsection{Higher class numbers} Note that the orbifold cases
$(5,\myLangle 2, 1+\sqrt{-5}\myRangle)$, $(6, \myLangle 2, \sqrt{-6}\myRangle)$, and $(39, \myLangle 3, \frac{3+\sqrt{-39}}{2}\myRangle)$ are already ruled out as link complements by the fact that $|B(I)|>|\PSL(2,O_d/I)|$. This is in contrast to the orbifold cases with $h_d=1$.

\mbox{} % Annoying LaTeX behavior. If it is dropped, the section heading and floats are placed weirdly.

\begin{table}[h]
\caption{Magma function to produce $B(I)$ for $d=5$.\label{tbl:magma5}}
\begin{center}
\begin{minipage}{12cm}
\begin{verbatim}
function B5(n,k,l)
    return Group<a,b,c,t,u | (t,u), a^2, b^2, (t*a)^3, (a*b)^2,
                             (a*u*b*u^-1)^2, a*c*a*t*c^-1*t^-1,
                             u*b*u^-1*c*b*t*c^-1*t^-1,
                             
                             t^n, t^k*u^l,
                             (t*b)^n,(t*b)^k*(t*u^-1*c*t^-1)^l>;
end function;
\end{verbatim}
\end{minipage}
\end{center}
\end{table}

\begin{table}[h]
\caption{Computations for $d=5$ (compare to Figure~\ref{fig:disc5}).\label{tbl:results5}}
\begin{center}
\begin{tabular}{| c || r |r|r||c|c|c|}
\hline
Ideal & \multicolumn{1}{c|}{$n$} & \multicolumn{1}{c|}{$k$} & \multicolumn{1}{c||}{$l$} & Method & $|B(I)|$ & $|\PSL(2,O_d/I)|$ \\ \hline \hline
$\myLangle 2,1+\sqrtMinusDSymbolTable{5}\myRangle$ &
2 & 1 & 1 &
\texttt{Order} & 12 &
6 \\
&&&& (Orbifold) & & \\ \hline
$\myLangle 3,1+\sqrtMinusDSymbolTable{5}\myRangle$ &
3 & 1 & 1 &
\multicolumn{2}{c|}{8-Link} &
12 \\ \hline
$\myLangle 2\myRangle$ &
2 & 0 & 2 &
\texttt{LowerBound1} & $\infty$ &
48 \\ \hline
$\myLangle \sqrtMinusDSymbolTable{5}\myRangle$ &
5 & 0 & 1 &
\texttt{LowerBound1} & $\infty$ &
60 \\ \hline
$\myLangle 1+\sqrtMinusDSymbolTable{5}\myRangle$ &
6 & 1 & 1 &
\texttt{Order} & 144 &
72 \\ \hline
$\myLangle 7,3+\sqrtMinusDSymbolTable{5}\myRangle = $ &
7 & 3 & 1 &
\texttt{LowerBound1} & $\geq 122472$ &
168 \\
$\myLangle 3+\sqrtMinusDSymbolTable{5}, 1-2 \sqrtMinusDSymbolTable{5}\myRangle$  & & & & & &   \\ \hline
$\myLangle 4,2+2 \sqrtMinusDSymbolTable{5}\myRangle$ &
&&&
$\containedInI\myLangle 2\myRangle$ & $\infty$ &
192 \\ \hline
$\myLangle 3\myRangle$ &
3 & 0 & 3 &
\texttt{LowerBound1} & $\infty$ &
288 \\ \hline
\markConjugate{$\myLangle 2-\sqrtMinusDSymbolTable{5}\myRangle$} &
9 & -2 & 1 &
\texttt{LowerBound1} & $\geq 1327104$ &
324 \\ \hline
$\myLangle 10,5+\sqrtMinusDSymbolTable{5}\myRangle = $ &
&&& $\containedInI\myLangle \sqrtMinusDSymbolTable{5}\myRangle $ & $\infty$ &
360 \\
$\myLangle 2 \sqrtMinusDSymbolTable{5}, 5+\sqrtMinusDSymbolTable{5}\myRangle$  &  & & & & &\\ \hline
$\myLangle 6,2+2 \sqrtMinusDSymbolTable{5}\myRangle$ &
&&&$\containedInI\myLangle 2\myRangle$ & $\infty$ &
576 \\ \hline
$\myLangle 3+\sqrtMinusDSymbolTable{5}\myRangle$ &
&&&$\containedInI\myLangle 7, 3+\sqrtMinusDSymbolTable{5}\myRangle$ & $\geq 122472$ &
1008 \\ \hline
$\myLangle 15,5+\sqrtMinusDSymbolTable{5}\myRangle = $ &
&&&$\containedInI\myLangle \sqrtMinusDSymbolTable{5}\myRangle $ & $\infty$ &
1440 \\
$\myLangle 5+\sqrtMinusDSymbolTable{5}, 3 \sqrtMinusDSymbolTable{5}\myRangle$  & &&& & &  \\ \hline
$\myLangle 4\myRangle$ &
&&&$\containedInI\myLangle 2\myRangle$ & $\infty$ &
1536 \\ \hline
$\myLangle 6,3+3 \sqrtMinusDSymbolTable{5}\myRangle$ &
&&&$\containedInI\myLangle 3\myRangle$ & $\infty$ &
1728 \\ \hline
$\myLangle 18,11+\sqrtMinusDSymbolTable{5}\myRangle = $ &
&&&$\containedInI\myLangle 2+\sqrtMinusDSymbolTable{5}\myRangle$ & $\geq 1327104$ &
1944 \\
$\myLangle 4+2 \sqrtMinusDSymbolTable{5}, 3-3 \sqrtMinusDSymbolTable{5}\myRangle$  & &&& & &  \\ \hline
$\myLangle 2 \sqrtMinusDSymbolTable{5}\myRangle$ &
&&&$\containedInI\myLangle 2\myRangle$ & $\infty$ &
2880 \\ \hline
$\myLangle 1+2 \sqrtMinusDSymbolTable{5}\myRangle$ &
&&&$\containedInI\myLangle 7,3-\sqrtMinusDSymbolTable{5}\myRangle$ & $\geq 122472$ &
4032 \\ \hline
$\myLangle 4+\sqrtMinusDSymbolTable{5}\myRangle$ &
&&&$\containedInI\myLangle 7,3-\sqrtMinusDSymbolTable{5}\myRangle$ & $\geq 122472$ &
4032 \\ \hline
$\myLangle 2+2 \sqrtMinusDSymbolTable{5}\myRangle$ &
&&&$\containedInI\myLangle 2\myRangle$ & $\infty$ &
4608 \\ \hline
$\myLangle 5\myRangle$ &
&&&$\containedInI\myLangle \sqrtMinusDSymbolTable{5}\myRangle $ & $\infty$ &
7500 \\ \hline
$\myLangle 5+\sqrtMinusDSymbolTable{5}\myRangle$ &
&&&$\containedInI\myLangle \sqrtMinusDSymbolTable{5}\myRangle $ & $\infty$ &
8640 \\ \hline
$\myLangle 3+2 \sqrtMinusDSymbolTable{5}\myRangle$ &
29 & -13 & 1 & \!\!\texttt{LowerBound2(B5(29,-13,1),}\!\! & $\infty$ &
12180 \\
&&&& \!\!\texttt{\phantom{LowerBound2(}12180,4)\phantom{LLLLL}}\!\! & & \\
 \hline
$\myLangle 6\myRangle$ &
&&&$\containedInI\myLangle 2\myRangle$ & $\infty$ &
13824 \\ \hline
$\myLangle 4+2 \sqrtMinusDSymbolTable{5}\myRangle$ &
&&&$\containedInI\myLangle 2\myRangle$ & $\infty$ &
15552 \\ \hline
\end{tabular}
\end{center}
\explainConjugate{fig:disc5}
\end{table}

\clearpage

\begin{table}[h]
\caption{Magma function to produce $B(I)$ for $d=6$.\label{tbl:magma6}}
\begin{center}
\begin{minipage}{12cm}
\begin{verbatim}
function B6(n,k,l)
    return Group<a,t,u,b,c|a^2,b^2,(t,u),(t*a)^3,(a,c),
                           t^-1*c*t*u*b*u^-1*c^-1*b^-1,
                           (a*t*b)^3,(a*t*u*b*u^-1)^3,

                           t^n,t^k*u^l,
                           (t*b)^n,(t*b)^k*(c*u)^-l>;
end function;
\end{verbatim}
\end{minipage}
\end{center}
\end{table}

\begin{table}[h]
\caption{Computations for $d=6$ (compare to Figure~\ref{fig:disc6}).\label{tbl:results6}}
\begin{center}
\begin{tabular}{| c || r |r|r||c|c|c|}
\hline
Ideal & \multicolumn{1}{c|}{$n$} & \multicolumn{1}{c|}{$k$} & \multicolumn{1}{c||}{$l$} & Method & $|B(I)|$ & $|\PSL(2,O_d/I)|$ \\ \hline \hline
$\myLangle 2,\sqrtMinusDSymbolTable{6}\myRangle$ &
2 & 0 & 1 &
\texttt{Order} & 24 &
6 \\
&&&& (Orbifold) && \\
\hline
$\myLangle 3,\sqrtMinusDSymbolTable{6}\myRangle$ &
3 & 0 & 1 &
\texttt{LowerBound1} & $\infty$ &
12 \\ \hline
$\myLangle 2\myRangle$ &
2 & 0 & 2 &
\texttt{LowerBound1} & $\infty$ &
48 \\ \hline
$\myLangle 5,2+\sqrtMinusDSymbolTable{6}\myRangle = $ &
5 & 2 & 1 &
\texttt{Order} & $1966080$ &
60 \\
$\myLangle 2+\sqrtMinusDSymbolTable{6}, 3-\sqrtMinusDSymbolTable{6}\myRangle$  & & & & & &  \\ \hline
$\myLangle \sqrtMinusDSymbolTable{6}\myRangle$ &
 &  &  &
$\containedInI\myLangle 3, \sqrtMinusDSymbolTable{6}\myRangle$ & $\infty$ &
72 \\ \hline
$\myLangle 1+\sqrtMinusDSymbolTable{6}\myRangle$ &
7 & 1 & 1 &
\texttt{LowerBound1} & $\infty$ &
168 \\ \hline
$\myLangle 4,2 \sqrtMinusDSymbolTable{6}\myRangle$ &
 &  &  &
$\containedInI\myLangle 2\myRangle$ & $\infty$ &
192 \\ \hline
$\myLangle 3\myRangle$ &
 &  &  &
$\containedInI\myLangle 3, \sqrtMinusDSymbolTable{6}\myRangle$ & $\infty$ &
324 \\ \hline
$\myLangle 2+\sqrtMinusDSymbolTable{6}\myRangle$ &
 &  &  &
$\containedInI\myLangle 5,2+\sqrtMinusDSymbolTable{6}\myRangle$ & $\geq 1966080$ &
360 \\ \hline
$\myLangle 6,2 \sqrtMinusDSymbolTable{6}\myRangle$ &
 &  &  &
$\containedInI\myLangle 2\myRangle$ & $\infty$ &
576 \\ \hline
$\myLangle 11,4+\sqrtMinusDSymbolTable{6}\myRangle = $ &
11 & 4 & 1 &
\texttt{LowerBound1} & $\geq 1.634 \cdot 10^{24}$ &
660 \\
$\myLangle 4+\sqrtMinusDSymbolTable{6}, 3-2 \sqrtMinusDSymbolTable{6}\myRangle$  & & & & & &  \\ \hline
$\myLangle 14,8+\sqrtMinusDSymbolTable{6}\myRangle = $ &
 &  &  &
$\containedInI\myLangle 1+\sqrtMinusDSymbolTable{6}\myRangle$ & $\infty$ &
1008 \\
$\myLangle 2+2 \sqrtMinusDSymbolTable{6}, 6-\sqrtMinusDSymbolTable{6}\myRangle$  & & & & & &  \\ \hline
$\myLangle 3+\sqrtMinusDSymbolTable{6}\myRangle$ &
 &  &  &
$\containedInI\myLangle 3, \sqrtMinusDSymbolTable{6}\myRangle$ & $\infty$ &
1440 \\ \hline
$\myLangle 4\myRangle$ &
 &  &  &
$\containedInI\myLangle 2\myRangle$ & $\infty$ &
1536 \\ \hline
$\myLangle 6,3 \sqrtMinusDSymbolTable{6}\myRangle$ &
 &  &  &
$\containedInI\myLangle 3, \sqrtMinusDSymbolTable{6}\myRangle$ & $\infty$ &
1944 \\ \hline
$\myLangle 4+\sqrtMinusDSymbolTable{6}\myRangle$ &
 &  & &
$\containedInI\myLangle 11,4+\sqrtMinusDSymbolTable{6}\myRangle$ & $\geq 1.634 \cdot 10^{24}$ &
3960 \\ \hline
$\myLangle 2 \sqrtMinusDSymbolTable{6}\myRangle$ &
 &  &  &
$\containedInI\myLangle 2\myRangle$ & $\infty$ &
4608 \\ \hline
$\myLangle 5\myRangle$ &
 &  &  &
$\containedInI\myLangle 5,2+\sqrtMinusDSymbolTable{6}\myRangle$ & $\geq 1966080$ &
7200 \\ \hline
$\myLangle 1+2 \sqrtMinusDSymbolTable{6}\myRangle$ &
 &  &  &
$\containedInI\myLangle 5,2-\sqrtMinusDSymbolTable{6}\myRangle$ & $\geq 1966080$ &
7500 \\ \hline
$\myLangle 2+2 \sqrtMinusDSymbolTable{6}\myRangle$ &
 &  &  &
$\containedInI\myLangle 2\myRangle$ & $\infty$ &
8064 \\ \hline
$\myLangle 5+\sqrtMinusDSymbolTable{6}\myRangle$ &
31 & 5 & 1 &
\!\!\texttt{LowerBound2(...,14880,2)}\!\! & $\geq 4.424\cdot 10^{22}$ &
14880 \\ \hline
$\myLangle 6\myRangle$ &
 &  &  &
$\containedInI\myLangle 3, \sqrtMinusDSymbolTable{6}\myRangle$ & $\infty$ &
15552 \\ \hline
$\myLangle 3+2 \sqrtMinusDSymbolTable{6}\myRangle$ &
 &  &  &
$\containedInI\myLangle 3, \sqrtMinusDSymbolTable{6}\myRangle$ & $\infty$ &
15840 \\ \hline
\end{tabular}
\end{center}
\end{table}

\clearpage

\begin{table}[h]
\caption{Magma function to produce $B(I)$ for $d=15$.\label{tbl:magma15}}
\vspace{-0.2cm}
\begin{center}
\begin{minipage}{12cm}
\begin{verbatim}
function B15(n1,k1,l1,  n2,k2,l2)
    return Group<a,c,t,u|
        (t,u),(a,c),a^2,(t*a)^3,u*c*u*a*t*u^-1*c^-1*u^-1*a*t^-1,
        
        t^n1, t^k1*u^l1,
        (u*c*a)^n2,(u*c*a)^k2*(c^-1*a*u^-1*c^-1*u^-1*t*a)^l2>;
end function;
\end{verbatim}
\end{minipage}
\end{center}
\end{table}

\begin{table}[h]
\caption{Computations for $d=15$ (compare to Figure~\ref{fig:disc15}).\label{tbl:results15}}
\vspace{-0.3cm}
\begin{center}
\begin{tabular}{| c || r |r|r|r|r|r||c|c|c|}
\hline
Ideal &
\multicolumn{1}{c|}{$n_1$} & \multicolumn{1}{c|}{$k_1$} & \multicolumn{1}{c|}{$l_1$} &
\multicolumn{1}{c|}{$n_2$} & \multicolumn{1}{c|}{$k_2$} & \multicolumn{1}{c||}{$l_2$} &
Method & $|B(I)|$ & $|\PSL(2,O_d/I)|$ \\ \hline \hline
$\myLangle 2,\frac{1+\sqrtMinusDSymbolTable{15}}{2}\myRangle$ &
2 & 0 & 1 &
1 & 0 & 2 &
\multicolumn{2}{c|}{6-Link} &
6 \\ \hline
$\myLangle 3,\frac{3+\sqrtMinusDSymbolTable{15}}{2}\myRangle = $ &
3 & 1 & 1 &
3 & 0 & 1 &
\multicolumn{2}{c|}{8-Link} &
12 \\
$\myLangle \frac{3+\sqrtMinusDSymbolTable{15}}{2}, \frac{3-\sqrtMinusDSymbolTable{15}}{2}\myRangle$  & & & & & & & \multicolumn{2}{c|}{}&  \\ \hline
$\myLangle \frac{1+\sqrtMinusDSymbolTable{15}}{2}\myRangle$ &
4 & 0 & 1 &
1 & 0 & 4 &
\multicolumn{2}{c|}{12-Link} &
24 \\ \hline
$\myLangle 2\myRangle$ &
2 & 0 & 2 &
2 & 0 & 2 &
\texttt{LowerBound1} & $\infty$ &
36 \\ \hline
$\myLangle 5,\frac{5+\sqrtMinusDSymbolTable{15}}{2}\myRangle = $ &
5 & 2 & 1 &
5 & 0 & 1 &
\multicolumn{2}{c|}{24-Link} &
60 \\
$\myLangle \frac{5+\sqrtMinusDSymbolTable{15}}{2}, \frac{5-\sqrtMinusDSymbolTable{15}}{2}\myRangle$  & & & & & & & \multicolumn{2}{c|}{} & \\ \hline
\markConjugate{$\myLangle \frac{3-\sqrtMinusDSymbolTable{15}}{2}\myRangle$} &
6 & -2 & 1 &
3 & 0 & 2 &
\multicolumn{2}{c|}{24-Link} &
72 \\ \hline
$\myLangle 4,1+\sqrtMinusDSymbolTable{15}\myRangle$ &
&&&&&&
$\containedInI\myLangle 2\myRangle$ & $\infty$ &
144 \\ \hline
$\myLangle 8,\frac{9+\sqrtMinusDSymbolTable{15}}{2}\myRangle = $ &
8 & 4 & 1 &
2 & 1 & 4 &
\texttt{Order} & 46656 &
192 \\
$\myLangle 1+\sqrtMinusDSymbolTable{15}, \frac{7-\sqrtMinusDSymbolTable{15}}{2}\myRangle$  & & & & & & & & &  \\ \hline
$\myLangle 3\myRangle$ &
3 & 0 & 3 &
3 & 0 & 3 &
\texttt{LowerBound1} & $\infty$ &
324 \\ \hline
$\myLangle \frac{5+\sqrtMinusDSymbolTable{15}}{2}\myRangle$ &
10 & 2 & 1 &
5 & 0 & 2 &
\texttt{LowerBound1} & $\infty$ &
360 \\ \hline
$\myLangle 6,3+\sqrtMinusDSymbolTable{15}\myRangle = $ &
&&&&&&
$\containedInI\myLangle 2\myRangle$ & $\infty$ &
432 \\
$\myLangle 3+\sqrtMinusDSymbolTable{15}, 3-\sqrtMinusDSymbolTable{15}\myRangle$  & & & & & & & & &  \\ \hline
\markConjugate{$\myLangle 12,\frac{9+\sqrtMinusDSymbolTable{15}}{2}\myRangle = $} &
12 & 4 & 1 &
3 & 0 & 4 &
\texttt{LowerBound1} & $\infty$ &
576 \\
$\myLangle 3-\sqrtMinusDSymbolTable{15}, \frac{9+\sqrtMinusDSymbolTable{15}}{2}\myRangle$  & & & & & &  & & & \\ \hline
$\myLangle 1+\sqrtMinusDSymbolTable{15}\myRangle$ &
&&&&&&
$\containedInI\myLangle 2\myRangle$ & $\infty$ &
1152 \\ \hline
$\myLangle 4\myRangle$ &
&&&&&&
$\containedInI\myLangle 2\myRangle$ & $\infty$ &
1152 \\ \hline
$\myLangle \sqrtMinusDSymbolTable{15}\myRangle$ &
15 & 7 & 1 &
15 & 0 & 1 &
\texttt{LowerBound1} & $\infty$ &
1440 \\ \hline
$\myLangle \frac{7+\sqrtMinusDSymbolTable{15}}{2}\myRangle$ &
&&&&&&
$\containedInI\myLangle 8,\frac{9-\sqrtMinusDSymbolTable{15}}{2}\myRangle$ & $\geq 46656$ &
1536 \\ \hline
$\myLangle 6,\frac{3+3 \sqrtMinusDSymbolTable{15}}{2}\myRangle$ &
&&&&&&
$\containedInI\myLangle 3\myRangle$ & $\infty$ &
1944 \\ \hline
\markConjugate{$\myLangle 17,\frac{11+\sqrtMinusDSymbolTable{15}}{2}\myRangle = $} &
17 & 5 & 1 &
17 & -8 & 1 &
\texttt{LowerBound1} & $\infty$ &
2448 \\
$\myLangle \frac{1-3 \sqrtMinusDSymbolTable{15}}{2}, \frac{11+\sqrtMinusDSymbolTable{15}}{2}\myRangle$  & & & & & & & & &  \\ \hline
$\myLangle 2+\sqrtMinusDSymbolTable{15}\myRangle$ &
19 & 10 & 1 &
19 & 4 & 1 &
\texttt{LowerBound1} & $\geq 1.370 \cdot 10^{39}$ &
3420 \\ \hline
$\myLangle 3+\sqrtMinusDSymbolTable{15}\myRangle$ &
&&&&&&
$\containedInI\myLangle 2\myRangle$ & $\infty$ &
3456 \\ \hline
$\myLangle \frac{9+\sqrtMinusDSymbolTable{15}}{2}\myRangle$ &
&&&&&&
$\containedInI\myLangle 8,\frac{9+\sqrtMinusDSymbolTable{15}}{2}\myRangle$ & $\geq $46656 &
4608 \\ \hline
$\myLangle 5\myRangle$ &
5 & 0 & 5 &
5 & 0 & 5 &
\texttt{LowerBound1(...,5)} & $\infty$ &
7500 \\ \hline
$\myLangle 6\myRangle$ &
&&&&&&
$\containedInI\myLangle 2\myRangle$ & $\infty$ &
11664 \\ \hline
$\myLangle \frac{1+3 \sqrtMinusDSymbolTable{15}}{2}\myRangle$ &
 &&&&&&
 $\containedInI\myLangle 17,\frac{11-\sqrtMinusDSymbolTable{15}}{2}\myRangle$ & $\infty$ &
14688 \\ \hline
$\myLangle \frac{11+\sqrtMinusDSymbolTable{15}}{2}\myRangle$ &
 &&&&&&
 $\containedInI\myLangle 17,\frac{11+\sqrtMinusDSymbolTable{15}}{2}\myRangle$ & $\infty$ &
14688 \\ \hline
$\myLangle 4+\sqrtMinusDSymbolTable{15}\myRangle$ &
31 & -14 & 1 &
31 & 13 & 1 &
\!\!\texttt{LowerBound2(\phantom{AAAAAA}}\!\! & $\infty$ &
14880 \\
&&&&&&& \!\!\texttt{\phantom{LowerB}...,14880,2)}\!\! && \\ \hline
$\myLangle \frac{3+3 \sqrtMinusDSymbolTable{15}}{2}\myRangle$ &
&&&&&&
$\containedInI\myLangle 3\myRangle$ & $\infty$ &
15552 \\ \hline
\end{tabular}
\end{center}
\explainConjugate{fig:disc15}
\end{table}

\clearpage

\begin{table}[h]
\caption{Magma function to produce $B(I)$ for $d=23$.\label{tbl:magma23}}
\begin{center}
\begin{minipage}{12cm}
\begin{verbatim}
function B23(n1,k1,l1, n2,k2,l2, n3,k3,l3)
    return Group<g1,g2,g3,g4,g5|
        g3^3,(g3*g2)^2,(g1,g2),(g4,g5),
        g5*g2^-1*g3^-1*g5^-1*g1^-1*g2^-1*g3^-1*g1,
        g4^-1*g5*g3*g2*g5^-1*g2*g4*g3,
        
        g2^n1,g2^k1*g1^l1,
        g4^n2,g4^k2*g5^l2,
        (g4*g3*g2)^n3,(g4*g3*g2)^k3*(g2^-1*g5*g3*g2)^l3>;
end function;
\end{verbatim}
\end{minipage}
\end{center}
\end{table}

\begin{table}[h]
\caption{Computations for $d=23$ (compare to Figure~\ref{fig:disc23}).\label{tbl:results23}}
\begin{center}
\begin{tabular}{| c || r |r|r|r|r|r|r|r|r||c|c|c|}
\hline
Ideal &
\multicolumn{1}{c|}{$n_1$} & \multicolumn{1}{c|}{$k_1$} & \multicolumn{1}{c|}{$l_1$} &
\multicolumn{1}{c|}{$n_2$} & \multicolumn{1}{c|}{$k_2$} & \multicolumn{1}{c|}{$l_2$} &
\multicolumn{1}{c|}{$n_3$} & \multicolumn{1}{c|}{$k_3$} & \multicolumn{1}{c||}{$l_3$} &
Method & $|B(I)|$ & $\!\!|\PSL\!\left(2,\!\frac{O_d}{I}\right)|\!\!$ \\ \hline \hline
$\myLangle 2,\frac{1+\sqrtMinusDSymbolTable{23}}{2}\myRangle$ &
2 & 1 & 1 &
1 & 0  & 2 &
2 & 1 & 1 & \multicolumn{2}{c|}{9-Link} &
6 \\ \hline
$\myLangle 3,\frac{1+\sqrtMinusDSymbolTable{23}}{2}\myRangle$ &
3&1&1&
3&1 &1 &
3 &0 &1 & \multicolumn{2}{c|}{12-Link} &
12 \\ \hline
\markConjugate{$\myLangle 4,\frac{3-\sqrtMinusDSymbolTable{23}}{2}\myRangle = $} &
4&1&-1& 2 & 1& -2 & 4&-1 &1 & \multicolumn{2}{c|}{18-Link} &
24 \\
$\myLangle \frac{3-\sqrtMinusDSymbolTable{23}}{2}, \frac{5+\sqrtMinusDSymbolTable{23}}{2}\myRangle$  & & & & & & & & & & \multicolumn{2}{c|}{} &  \\ \hline
$\myLangle 2\myRangle$ &
2 & 0 & 2 &
2 & 0  & 2 & 2 & 0 & 2 & \texttt{LowerBound1} & $\infty$ &
36 \\ \hline
$\myLangle \frac{1+\sqrtMinusDSymbolTable{23}}{2}\myRangle$ &
6 & 1 & 1 &
3 & -1 & 2 & 6 & -3 & 1 & \texttt{LowerBound1} & $\infty$ &
72 \\ \hline
\markConjugate{$\myLangle 6,\frac{5-\sqrtMinusDSymbolTable{23}}{2}\myRangle = $} &
6 & -2 & 1 &
6 & 1 & 1 & 3 & 0 & 2 & \texttt{Order} & 288 &
72 \\
$\myLangle \frac{5-\sqrtMinusDSymbolTable{23}}{2}, \frac{7+\sqrtMinusDSymbolTable{23}}{2}\myRangle$  & & & & & &  & & & & & &\\ \hline
$\myLangle 4,1+\sqrtMinusDSymbolTable{23}\myRangle$ &
 &  &  &
& & & & & & $\containedInI\myLangle 2\myRangle$ & $\infty$ &
144 \\ \hline
\markConjugate{$\myLangle \frac{3-\sqrtMinusDSymbolTable{23}}{2}\myRangle$} &
8 & -1 & 1 &
4 & 1 & 2 & 8 & 3 & 1
 & \texttt{LowerBound1} & $\geq 139968$ &
192 \\ \hline
$\myLangle 3\myRangle$ &
3 & 0 & 3 &
3&0 &3 &3 &0 & 3& \!\!\texttt{LowerBound1(}\!\! & $\infty$ &
288 \\
& & & & & & & & & & \!\!\texttt{\phantom{Lower}...,12)}\!\! & & \\ \hline
$\myLangle 9,\frac{7+\sqrtMinusDSymbolTable{23}}{2}\myRangle = $ &
9 & 4 & 1 &
9 & -2 & 1 & 9 & 6 & 1 
 & \texttt{LowerBound1} & $\geq 82944$ &
324 \\
$\myLangle \frac{7+\sqrtMinusDSymbolTable{23}}{2}, 2-\sqrtMinusDSymbolTable{23}\myRangle$  & & & & & & & & & & & & \\ \hline
$\myLangle 6,1+\sqrtMinusDSymbolTable{23}\myRangle$ &
 &  &  &
& & & & & & $\containedInI\myLangle 2\myRangle$ & $\infty$ &
432 \\ \hline
$\myLangle \frac{5+\sqrtMinusDSymbolTable{23}}{2}\myRangle$ &
12 & 3 & 1 &
6& 3& 2& 12& 7& 1& \texttt{LowerBound1}  & $\geq 2.533\cdot 10^{25}$ &
576 \\ \hline
$\myLangle 12,\frac{13+\sqrtMinusDSymbolTable{23}}{2}\myRangle = $ &
 &  &  &
& & & & & &$\containedInI$  & $\infty$ &
576 \\
$\myLangle 1+\sqrtMinusDSymbolTable{23}, \frac{11-\sqrtMinusDSymbolTable{23}}{2}\myRangle$  & & & & & & & & & & $\myLangle \frac{1+\sqrtMinusDSymbolTable{23}}{2}\myRangle$  & & \\ \hline
$\myLangle 13,\frac{9+\sqrtMinusDSymbolTable{23}}{2}\myRangle = $ &
13 & 5 & 1 &
13 & 4 & 1 & 13 & 2 & 1 & \texttt{LowerBound1} &\!$\geq 4.564\cdot 10^{32}$\! &
1092 \\
$\myLangle \frac{9+\sqrtMinusDSymbolTable{23}}{2}, 4-\sqrtMinusDSymbolTable{23}\myRangle$  & & & & & && & & & & &  \\ \hline
$\myLangle 4\myRangle$ &
 &  &  &
& & & & & & $\containedInI\myLangle 2\myRangle$ & $\infty$ &
1152 \\ \hline
$\myLangle 8,3+\sqrtMinusDSymbolTable{23}\myRangle = $ &
 &  &  &
& & & & & & $\containedInI\myLangle 2\myRangle$ & $\infty$ &
1152 \\
\!\!\!\!$\myLangle 3+\sqrtMinusDSymbolTable{23}, 5-\sqrtMinusDSymbolTable{23}\myRangle$\!\!\!\! & & & & & & & & & & & & \\ \hline
$\myLangle 16,\frac{19+\sqrtMinusDSymbolTable{23}}{2}\myRangle = $ &
 &  &  &
& & & & & & $\containedInI$ & $\geq 139968$ &
1536 \\
$\myLangle 3+\sqrtMinusDSymbolTable{23}, \frac{13-\sqrtMinusDSymbolTable{23}}{2}\myRangle$  & & & & & & & & & &$\myLangle \frac{3+\sqrtMinusDSymbolTable{23}}{2}\myRangle$ &
 & \\ \hline
$\myLangle 6,\frac{3+3 \sqrtMinusDSymbolTable{23}}{2}\myRangle$ &
 &  &  &
& & & & & & $\containedInI\myLangle 3\myRangle$ & $\infty$ &
1728 \\ \hline
$\myLangle \frac{7+\sqrtMinusDSymbolTable{23}}{2}\myRangle$ &
&&&
& & & & & & $\containedInI$ & $\geq 82944$ &
1944 \\
& & & & & & & & & & $\myLangle 9,\frac{7+\sqrtMinusDSymbolTable{23}}{2}\myRangle$ & & \\ \hline
$\myLangle 18,\frac{11+\sqrtMinusDSymbolTable{23}}{2}\myRangle=$ &
 &  &  &
& & & & & & $\containedInI$ & $\infty$ &
1944 \\
$\myLangle \frac{11+\sqrtMinusDSymbolTable{23}}{2}, \frac{3-3 \sqrtMinusDSymbolTable{23}}{2}\myRangle$& & & & & &  & & & & $\myLangle \frac{1-\sqrtMinusDSymbolTable{23}}{2}\myRangle$ & &  \\ \hline
$\myLangle 1+\sqrtMinusDSymbolTable{23}\myRangle$ &
 &  &  &
& & & & & & $\containedInI\myLangle 2\myRangle$ & $\infty$ &
3456 \\  \hline
$\myLangle \sqrtMinusDSymbolTable{23}\myRangle$ &
23 & -11 & 1 &
23 & -6 & 1 & 23 & -6 & 1 & \!\!\texttt{LowerBound2(}\!\! & $\infty$ &
6072 \\ 
&&&&&&&&&& \!\!\texttt{\phantom{L}...,6072,2)}\!\! && \\\hline
$\myLangle \frac{9+\sqrtMinusDSymbolTable{23}}{2}\myRangle$ &
 &  &  &
& & & & & & $\containedInI$ & \!$\geq 4.564\cdot 10^{32}$\! &
6552 \\
& & & & & & & & & & $\myLangle 13,\frac{9+\sqrtMinusDSymbolTable{23}}{2}\myRangle$ & & \\ \hline
$\myLangle 5\myRangle$ &
5 & 0 & 5 &
5 & 0 & 5 & 5 & 0 & 5 & \texttt{LowerBound1} & $\infty$ &
7800 \\ \hline
$\myLangle 2+\sqrtMinusDSymbolTable{23}\myRangle$ &
 &  &  &
& & & & & & $\containedInI$ & $\geq 82944$ &
8748 \\ 
& & & & & & & & & & $\myLangle 9,\frac{7-\sqrtMinusDSymbolTable{23}}{2}\myRangle$ & &  \\ \hline
$\myLangle 3+\sqrtMinusDSymbolTable{23}\myRangle$ &
 &  &  &
& & & & & & $\containedInI\myLangle 2\myRangle$ & $\infty$ &
9216 \\ \hline
$\myLangle 6\myRangle$ &
 &  &  &
& & & & & & $\containedInI\myLangle 2\myRangle$ & $\infty$ &
10368 \\ \hline
$\myLangle \frac{11+\sqrtMinusDSymbolTable{23}}{2}\myRangle$ &
 &  &  &
& & & & & & $\containedInI$ & $\infty$ &
15552 \\ 
& & & & & & & & & & $\myLangle \frac{1-\sqrtMinusDSymbolTable{23}}{2}\myRangle$ & &  \\ \hline
\end{tabular}
\end{center}
\explainConjugate{fig:disc23}
\end{table}

\clearpage

\begin{table}[h]
\caption{Magma function to produce $B(I)$ for $d=31$.\label{tbl:magma31}}
\begin{center}
\begin{minipage}{12cm}
\begin{verbatim}
function B31(n1,k1,l1, n2,k2,l2, n3,k3,l3)
    return Group<g1,g2,g3,g4,g5|(g1,g3),(g2)^3,(g2*g1^-1)^2,(g5,g4),
        g4*g1^-1*g3^-1*g2*g3*g4^-1*g2*g4*g3^-1*g1^-1*g2*g3*g4^-1*g2,
        g5*g3^-1*g2*g3*g4^-1*g2*g1^-1*g5^-1*g2^-1*g4*g3^-1*g2^-1*g3*g1,
        g2*g3*g4^-1*g2*g1^-1*g4*g3^-1*g2*g3*g4^-1*g1*g2^-1*g4*g3^-1,
        
        g1^n1,g1^k1*g3^l1,
        g4^n2,g4^k2*g5^l2,
        (g1*g5)^n3,(g1*g5)^k3*(g3^-1*g2*g3*g4^-1*g2*g5)^l3>;
end function;
\end{verbatim}
\end{minipage}
\end{center}
\end{table}

\begin{table}[h]
\caption{Computations for $d=31$ (compare to Figure~\ref{fig:disc31}).\label{tbl:results31}}
\begin{center}
\begin{tabular}{| c || r |r|r|r |r|r|r |r|r||c|c|c|}
\hline
Ideal &
\multicolumn{1}{c|}{$n_1$} & \multicolumn{1}{c|}{$k_1$} & \multicolumn{1}{c|}{$l_1$} &
\multicolumn{1}{c|}{$n_2$} & \multicolumn{1}{c|}{$k_2$} & \multicolumn{1}{c|}{$l_2$} &
\multicolumn{1}{c|}{$n_3$} & \multicolumn{1}{c|}{$k_3$} & \multicolumn{1}{c||}{$l_3$} &
Method & $|B(I)|$ & $\!\!|\PSL\!\left(2,\!\frac{O_d}{I}\right)|\!\!$ \\ \hline \hline
$\myLangle 2,\frac{1+\sqrtMinusDSymbolTable{31}}{2}\myRangle$ &
2 & 0 & 1 &
1&0 &2 & 1& 0&2 &
\multicolumn{2}{c|}{9-Link} &
6 \\ \hline
$\myLangle 4,\frac{1+\sqrtMinusDSymbolTable{31}}{2}\myRangle$ &
4 & 0 & 1 &
2 &1 & 2 &
1 & 0 & 4 &
\multicolumn{2}{c|}{18-Link} &
24 \\ \hline
$\myLangle 2\myRangle$ &
2 & 0 & 2 &
2 & 0 & 2 &
2 & 0 & 2 &
\texttt{LowerBound1} & $\infty$ &
36 \\ \hline
$\myLangle 5,\frac{3+\sqrtMinusDSymbolTable{31}}{2}\myRangle = $ &
5 & -1 & 1 & 5 &1 & 1 & 5 &1 & 1 &
\multicolumn{2}{c|}{36-Link} &
60 \\
$\myLangle \frac{3+\sqrtMinusDSymbolTable{31}}{2}, \frac{7-\sqrtMinusDSymbolTable{31}}{2}\myRangle$  & & & & & & & & &  & \multicolumn{2}{c|}{} & \\ \hline
$\myLangle 4,1+\sqrtMinusDSymbolTable{31}\myRangle$ &
 &  &  & & & & & & & 
$\containedInI\myLangle 2\myRangle$ & $\infty$ &
144 \\ \hline
$\myLangle 7,\frac{5+\sqrtMinusDSymbolTable{31}}{2}\myRangle = $ &
7 & -2 & 1 &
7 & -2 & 1 &
7 & 0 & 1 &
\texttt{LowerBound1} & $\infty$ &
168 \\
$\myLangle \frac{5+\sqrtMinusDSymbolTable{31}}{2}, \frac{9-\sqrtMinusDSymbolTable{31}}{2}\myRangle$  & & & & & & & & & & & &  \\ \hline
$\myLangle \frac{1+\sqrtMinusDSymbolTable{31}}{2}\myRangle$ &
8 & 0 & 1 &
4 & 1 & 2 &
2 & -1 & 4 &
\texttt{LowerBound1} & $\infty$ &
192 \\ \hline
$\myLangle 3\myRangle$ &
3 & 0 & 3 &
3 & 0 & 3 &
3 & 0 & 3 &
\!\!\texttt{LowerBound1(}\!\! & $\infty$ &
360 \\
&&&&&&&&&& \!\!\texttt{\phantom{LowerB}...,3)}\!\! & & \\
 \hline
$\myLangle \frac{3+\sqrtMinusDSymbolTable{31}}{2}\myRangle$ &
10 & -1 & 1 &
10 & -4 & 1 &
10 & 1 & 1 &
\texttt{LowerBound1} & \!$\geq 8.578\cdot 10^{28}$\! &
360 \\ \hline
$\myLangle 10,\frac{7+\sqrtMinusDSymbolTable{31}}{2}\myRangle = $ &
10  & -3  & 1  &
 10 &  2 & 1  &
  10& 3 & 1 &
\texttt{LowerBound1} & $\infty$ &
360 \\
$\myLangle \frac{7+\sqrtMinusDSymbolTable{31}}{2}, 3-\sqrtMinusDSymbolTable{31}\myRangle$  & & & & & & & & & & & &  \\ \hline
$\myLangle \frac{5+\sqrtMinusDSymbolTable{31}}{2}\myRangle$ &
 &  &  &
  &  &  &
 &  &  &
$\containedInI$ & $\infty$ &
1008 \\ 
&&&&&&&&&& $\myLangle 7,\frac{5+\sqrtMinusDSymbolTable{31}}{2}\myRangle$ & & \\
\hline
$\myLangle 14,\frac{9+\sqrtMinusDSymbolTable{31}}{2}\myRangle = $ &
 &  &  &
  &  &  &
 &  &  &
$\containedInI$ & $\infty$ &
1008 \\ 
$\myLangle \frac{9+\sqrtMinusDSymbolTable{31}}{2}, 5-\sqrtMinusDSymbolTable{31}\myRangle$  &&&&&&&&&& $\myLangle 7,\frac{5-\sqrtMinusDSymbolTable{31}}{2}\myRangle$ & & \\ \hline
$\myLangle 4\myRangle$ &
 &  &  &
  &  &  &
 &  &  &
$\containedInI\myLangle 2\myRangle$ & $\infty$ &
1152 \\ \hline
$\myLangle 8,1+\sqrtMinusDSymbolTable{31}\myRangle$ &
 &  &  &
  &  &  &
 &  &  &
$\containedInI\myLangle 2\myRangle$ & $\infty$ &
1152 \\ \hline
$\myLangle 16,\frac{17+\sqrtMinusDSymbolTable{31}}{2}\myRangle = $ &
 &  &  &
  &  &  &
 &  &  & $\containedInI$ & $\infty$ &
1536 \\
$\myLangle 1+\sqrtMinusDSymbolTable{31}, \frac{15-\sqrtMinusDSymbolTable{31}}{2}\myRangle$  & & & & & & & & & & $\myLangle \frac{1+\sqrtMinusDSymbolTable{31}}{2}\myRangle$ & &  \\ \hline
$\myLangle 6,\frac{3+3 \sqrtMinusDSymbolTable{31}}{2}\myRangle$ &
 &  &  &
  &  &  &
 &  &  & $\containedInI\myLangle 3\myRangle$ & $\infty$ &
2160 \\ \hline
$\myLangle \frac{7+\sqrtMinusDSymbolTable{31}}{2}\myRangle$ &
 &  &  &
  &  &  &
 &  &  &
$\containedInI$ & $\infty$ &
2880 \\
 & & & & & & & & & & \!\!$\myLangle 10,\frac{7+\sqrtMinusDSymbolTable{31}}{2}\myRangle$\!\! & & \\
 \hline
$\myLangle 19,\frac{11+\sqrtMinusDSymbolTable{31}}{2}\myRangle = $ &
19 & 5 & -1 &
19 & 3 & 1 &
19 & 7 & 1 &
\!\!\texttt{LowerBound2(}\!\! & \!\!\!$\geq 1.769\cdot 10^{11}$\!\!\! &
3420 \\
$\myLangle \frac{11+\sqrtMinusDSymbolTable{31}}{2}, \frac{5-3 \sqrtMinusDSymbolTable{31}}{2}\myRangle$  & & & & & &  &&&&\!\!\texttt{\phantom{LL}...,3420,1)}\!\!&&\\ \hline
$\myLangle 5\myRangle$ &
 5&  0& 5 &
 5 & 0 &5  &
 5&  0& 5 &
\!\!\texttt{LowerBound1(}\!\! & $\infty$ &
7200 \\
&&&&&&&&&& \!\!\texttt{\phantom{Lower}...,25)}\!\! & & \\ \hline
$\myLangle \frac{9+\sqrtMinusDSymbolTable{31}}{2}\myRangle$ &
 &  &  &
  &  &  &
 &  &  &
$\containedInI$ & $\infty$ &
8064 \\ 
&&&&&&&&&& $\myLangle 7,\frac{5-\sqrtMinusDSymbolTable{31}}{2}\myRangle$ & & \\ \hline
$\myLangle 1+\sqrtMinusDSymbolTable{31}\myRangle$ &
 &  &  & & & & & & & 
$\containedInI\myLangle 2\myRangle$ & $\infty$ &
9216 \\ \hline
$\myLangle 6\myRangle$ &
 &  &  & & & & & & & 
$\containedInI\myLangle 2\myRangle$ & $\infty$ &
12960 \\ \hline
$\myLangle \sqrtMinusDSymbolTable{31}\myRangle$ &
31 & 16 & 1 &
31 & 8 & 1 &
31 &5 & -1 &
\!\!\texttt{LowerBound2(}\!\! & $\infty$ &
14880 \\ 
&&&&&&&&&& \!\!\texttt{...,14880,1)}\!\! && \\ \hline
$\myLangle 2+\sqrtMinusDSymbolTable{31}\myRangle$ &
 &  &  &
  &  &  &
 &  &  &
$\containedInI$ & $\infty$ &
20160 \\
&&&&&&&&&& $\myLangle 7,\frac{5-\sqrtMinusDSymbolTable{31}}{2}\myRangle$ & & \\ \hline
$\myLangle \frac{11+\sqrtMinusDSymbolTable{31}}{2}\myRangle$ &
 &  &  &
  &  &  &
 &  &  &
$\containedInI$ & \!\!\!$\geq 1.769\cdot 10^{11}$\!\!\! &
20520 \\ 
&&&&&&&&&& \!\!\! $\myLangle 19,\frac{11+\sqrtMinusDSymbolTable{31}}{2}\myRangle$\!\!\! && \\ \hline
\end{tabular}
\end{center}
\end{table}

\clearpage

\begin{table}[h]
\caption{Magma function to produce $B(I)$ for $d=39$.\label{tbl:magma39}}
\begin{center}
\begin{minipage}{14cm}
\begin{verbatim}
function B39(n1,k1,l1, n2,k2,l2, n3,k3,l3, n4,k4,l4)
    return Group<g1,g2,g3,g4,g5,g6,g7|
        g3^3,(g4,g6),(g3*g5)^2,(g2,g1),(g1^-1*g3^-1)^2,(g3^-1,g7^-1),
        (g5^-1*g1)^3,g5^-1*g1*g6^-1*g4^-1*g5*g4*g1^-1*g6,
        g4^-1*g5*g4*g2^-1*g7*g5^-1*g7^-1*g2,(g7*g5^-1*g7^-1*g1)^3,
        g6*g1^-1*g5*g6^-1*g4^-1*g5*g4*g1^-1*g4^-1*g5*g4*g1^-1,
        
        g1^n1, g1^k1*g2^l1,
        g4^n2, g4^k2*g6^l2,
        (g5^-1*g6)^n3, (g5^-1*g6)^k3*(g4*g1^-1*g6)^l3,
        g5^n4, g5^k4*(g4*g2^-1*g7)^l4>;
end function;
\end{verbatim}
\end{minipage}
\end{center}
\end{table}

\begin{table}[h]
\caption{Computations for $d=39$ (compare to Figure~\ref{fig:disc39}).\label{tbl:results39}}
\begin{center}
\begin{tabular}{| c || r|r|r||c|c|c|}
\hline
Ideal & \multicolumn{1}{c|}{$n_i$} & \multicolumn{1}{c|}{$k_i$} & \multicolumn{1}{c||}{$l_i$} & Method & $|B(I)|$ & $|\PSL(2,O_d/I)|$ \\ \hline \hline
$\myLangle 2,\frac{1+\sqrtMinusDSymbolTable{39}}{2}\myRangle$ &
2 & 0 & 1 &
\texttt{Order} & 18 &
6 \\ 
& 2 & 0 & 1 & & & \\ 
& 2 & 0 & 1 & & & \\
& 2 & -1 & 1 & & & \\  \hline
$\myLangle 3,\frac{3+\sqrtMinusDSymbolTable{39}}{2}\myRangle$ &
3 & 1 & 1 &
\texttt{LowerBound} & $\infty$ &
12 \\
& 1 & 0 & 3 & (Orbifold) & & \\ 
& 3 & 2 & 1 & & & \\ 
& 3 & -1 & 1 & & & \\
 \hline
\markConjugate{$\myLangle 4,\frac{3-\sqrtMinusDSymbolTable{39}}{2}\myRangle$} &
4 & -2 & 1 &
\texttt{Order} & 72 &
24 \\ 
& 4 & 0 & 1 & & & \\ 
& 4 & 0 & 1 & & & \\ 
& 4 & -1 & 1 & & & \\
\hline
$\myLangle 2\myRangle$ &
2 & 0 & 2 &
\texttt{LowerBound1(...,2)} & $\infty$ &
36 \\ 
& 2 & 0 & 2 & & & \\ 
& 2 & 0 & 2 & & & \\ 
& 2 & 0 & 2 & & & \\ 
\hline
$\myLangle 5,\frac{1+\sqrtMinusDSymbolTable{39}}{2}\myRangle$ &
5 & 0 & 1 &
\texttt{LowerBound1} & $\infty$ &
60 \\
& 5 & 1 & -1 & & & \\ 
& 5 & 2 & 1 & & & \\ 
& 5 & 1 & 1 & & & \\ 
 \hline
$\myLangle 6,\frac{3+\sqrtMinusDSymbolTable{39}}{2}\myRangle = $ &
 &  &  &
$\containedInI \myLangle 3,\frac{3+\sqrtMinusDSymbolTable{39}}{2}\myRangle$ & $\infty$ &
72 \\
$\myLangle \frac{3+\sqrtMinusDSymbolTable{39}}{2}, \frac{9-\sqrtMinusDSymbolTable{39}}{2}\myRangle$  & & & & & &  \\ \hline
$\myLangle 4,1+\sqrtMinusDSymbolTable{39}\myRangle$ &
 &  &  &
$\containedInI\myLangle 2\myRangle$ & $\infty$ &
144 \\ \hline
$\myLangle 8,\frac{5+\sqrtMinusDSymbolTable{39}}{2}\myRangle = $ &
8 & 2 & 1 &
\texttt{LowerBound1} & $\geq 7.004\cdot 10^{27}$ &
192 \\
$\myLangle \frac{5+\sqrtMinusDSymbolTable{39}}{2}, \frac{11-\sqrtMinusDSymbolTable{39}}{2}\myRangle$  &8  &4 & 1&  & &  \\
& 8 & 4 & 1 & & & \\ 
& 8 & 3 & 1 &&&\\
 \hline
$\myLangle 3\myRangle$ &
 &  &  &
$\containedInI\myLangle 3,\frac{3+\sqrtMinusDSymbolTable{39}}{2}\myRangle$ & $\infty$ &
324 \\ \hline
$\myLangle \frac{1+\sqrtMinusDSymbolTable{39}}{2}\myRangle$ &
 &  &  &
$\containedInI\myLangle 5,\frac{1+\sqrtMinusDSymbolTable{39}}{2}\myRangle$ & $\infty$ &
360 \\ \hline
$\myLangle 10,\frac{9+\sqrtMinusDSymbolTable{39}}{2}\myRangle = $ &
&  &  &
$\containedInI\myLangle 5,\frac{1-\sqrtMinusDSymbolTable{39}}{2}\myRangle$ & $\infty$ &
360 \\
$\myLangle \frac{9+\sqrtMinusDSymbolTable{39}}{2}, 1-\sqrtMinusDSymbolTable{39}\myRangle$  & & & & & &  \\ \hline
$\myLangle 6,3+\sqrtMinusDSymbolTable{39}\myRangle$ &
&  &  &
$\containedInI\myLangle 2\myRangle$ & $\infty$ &
1432 \\ \hline
$\myLangle \frac{3+\sqrtMinusDSymbolTable{39}}{2}\myRangle$ &
 &  &  &
$\containedInI\myLangle 3,\frac{3+\sqrtMinusDSymbolTable{39}}{2}\myRangle$ & $\infty$ &
576 \\ \hline
$\myLangle 11,\frac{7+\sqrtMinusDSymbolTable{39}}{2}\myRangle = $ &
11 & 3 & 1 &
\texttt{LowerBound1} & $\geq 5.822\cdot 10^{75}$ &
660 \\
$\myLangle \frac{7+\sqrtMinusDSymbolTable{39}}{2}, 4-\sqrtMinusDSymbolTable{39}\myRangle$  & 11 & -2 & 1 & & &  \\
& 11 &3 & 1 &&& \\
& 11 &4 & 1 &&& \\
 \hline
$\myLangle 4\myRangle$ &
 &  & &
$\containedInI\myLangle 2\myRangle$ & $\infty$ &
1152 \\ \hline
$\myLangle 15,\frac{9+\sqrtMinusDSymbolTable{39}}{2}\myRangle = $ &
&  &  &
$\containedInI\myLangle 5,\frac{1-\sqrtMinusDSymbolTable{39}}{2}\myRangle$ & $\infty$ &1440 \\
$\myLangle \frac{9+\sqrtMinusDSymbolTable{39}}{2}, 6-\sqrtMinusDSymbolTable{39}\myRangle$  & & & & & &  \\ \hline
$\myLangle \frac{5+\sqrtMinusDSymbolTable{39}}{2}\myRangle$ &
 &  &  &
$\containedInI\myLangle 8,\frac{5+\sqrtMinusDSymbolTable{39}}{2}\myRangle$ & $\geq 7.004\cdot 10^{27}$ &
1536 \\ \hline
$\myLangle 6,\frac{3+3 \sqrtMinusDSymbolTable{39}}{2}\myRangle$ &
 &  & &
$\containedInI\myLangle 3\myRangle$ & $\infty$ &
1944 \\ \hline
$\myLangle \frac{7+\sqrtMinusDSymbolTable{39}}{2}\myRangle$ &
 &  &  &
$\containedInI\myLangle 11,\frac{7+\sqrtMinusDSymbolTable{39}}{2}\myRangle$ & $\geq 5.822\cdot 10^{75}$ &
3960 \\ \hline
$\myLangle 5\myRangle$ &
&&&
$\containedInI\myLangle 5,\frac{1+\sqrtMinusDSymbolTable{39}}{2}\myRangle$ & $\infty$ &
7200 \\ \hline
$\myLangle \frac{9+\sqrtMinusDSymbolTable{39}}{2}\myRangle$ &
&  &  &
$\containedInI\myLangle 5,\frac{1-\sqrtMinusDSymbolTable{39}}{2}\myRangle$ & $\infty$ & 
8640 \\ \hline
$\myLangle 6\myRangle$ &
&  & &
$\containedInI\myLangle 2\myRangle$ & $\infty$ &
11664 \\ \hline
\end{tabular}
\end{center}
\explainConjugate{fig:disc39}
\end{table}

\clearpage

\begin{table}[h]
\caption{Magma function to produce $B(I)$ for $d=47$.\label{tbl:magma47}}
\begin{center}
\begin{minipage}{14cm}
\begin{verbatim}
function B47(n1,k1,l1,  n2,k2,l2, n3,k3,l3, n4,k4,l4, n5,k5,l5)
    return Group<g1,g2,g3,g4,g5,g6,g7|
        g1^3,(g3,g2),(g2^-1*g1)^2,(g5,g7),g2^-1*g1*g6*g1^-1*g2*g6^-1,
        g6*g2^-1*g4^-1*g5*g3^-1*g6^-1*g4*g2*g3*g5^-1,
        g7^-1*g2^-1*g5^-1*g4*g1*g4^-1*g2*g7*g4*g1^-1*g4^-1*g5,
        g3*g5^-1*g4*g1*g4^-1*g2*g5*g3^-1*g2^-1*g4^-1*g1^-1*g4,
        g5^-1*g4*g1*g4^-1*g7^-1*g2^-1*g4*g1^-1*g4^-1*g5*g3^-1*g2*g3*g7,

        g2^n1,g2^k1*g3^l1,
        g5^n2,g5^k2*g7^l2,
        (g2*g7)^n3, (g2*g7)^k3*(g4*g1^-1*g4^-1*g5)^l3,
        (g6*g2^-1*g4^-1)^n4, (g6*g2^-1*g4^-1)^k4*(g5*g3^-1*g2^-1*g4^-1)^l4,
        (g6^-1*g1^-1*g4)^n5, (g6^-1*g1^-1*g4)^k5*(g3*g5^-1*g4*g1)^l5>;
end function;
\end{verbatim}
\end{minipage}
\end{center}
\end{table}

\begin{table}[h]
\caption{Computations for $d=47$ (compare to Figure~\ref{fig:disc47}).\label{tbl:results47}}
\begin{center}
\begin{tabular}{| c || r|r|r||c|c|c|}
\hline
Ideal & \multicolumn{1}{c|}{$n_i$} & \multicolumn{1}{c|}{$k_i$} & \multicolumn{1}{c||}{$l_i$} & Method & $|B(I)|$ & $|\PSL(2,O_d/I)|$ \\ \hline \hline
$\myLangle 2,\frac{1+\sqrtMinusDSymbolTable{47}}{2}\myRangle$ &
2 & 1 & 1 &
\multicolumn{2}{c|}{15-Link} & 
6 \\ 
& 2 & 1 & 1 & \multicolumn{2}{c|}{} & \\ 
& 1 & 0 & 2 & \multicolumn{2}{c|}{} & \\
& 1 & 0 & 2 & \multicolumn{2}{c|}{} & \\ 
& 2 & 0 & 1 & \multicolumn{2}{c|}{} & \\ \hline
$\myLangle 3,\frac{1+\sqrtMinusDSymbolTable{47}}{2}\myRangle$ &
3 & 1 & 1 &
\multicolumn{2}{c|}{20-Link} & 
12 \\ 
& 1 & 0 & 3 & \multicolumn{2}{c|}{} & \\ 
& 3 & 1 & -1 & \multicolumn{2}{c|}{} & \\ 
& 1 & 0 & 3 & \multicolumn{2}{c|}{} & \\ 
& 3 & 1 & 1 & \multicolumn{2}{c|}{} & \\  \hline
$\myLangle 4,\frac{1+\sqrtMinusDSymbolTable{47}}{2}\myRangle$ &
4 & 1 & 1 &
\multicolumn{2}{c|}{30-Link} & 
24 \\ 
& 4 & 1 & 1 & \multicolumn{2}{c|}{} & \\ 
& 1 & 0 & 4 & \multicolumn{2}{c|}{} & \\ 
& 2 & 1 & 2 & \multicolumn{2}{c|}{} & \\ 
& 4 & 2 & 1 & \multicolumn{2}{c|}{} & \\ \hline
$\myLangle 2\myRangle$ &
2 & 0 & 2 &
\texttt{LowerBound1(\dots,2)} & $\infty$ &
36 \\ 
& 2 & 0 & 2 & & & \\
& 2 & 0 & 2 & & & \\
& 2 & 0 & 2 & & & \\
& 2 & 0 & 2 & & \phantom{$\geq 6.174\cdot 10^{9}$} & \\ \hline
$\myLangle 6,\frac{1+\sqrtMinusDSymbolTable{47}}{2}\myRangle$ &
6 & 1 & 1 &
\texttt{LowerBound1} & $\infty$ &
72 \\ 
& 2 &1 & 3 && & \\
& 3 &1 & 2 && & \\
& 1 &0 & 6 && & \\
\phantom{$\myLangle \frac{9+\sqrtMinusDSymbolTable{47}}{2}, 7-\sqrtMinusDSymbolTable{47}\myRangle$}
& 6 &-2 & 1 && & \\
\hline
\end{tabular}
\end{center}
\end{table}

\begin{table}[h]
\caption{Computations for $d=47$ (continued).\label{tbl:results47b}}
\begin{center}
\begin{tabular}{| c || r|r|r||c|c|c|}
\hline
Ideal & \multicolumn{1}{c|}{$n_i$} & \multicolumn{1}{c|}{$k_i$} & \multicolumn{1}{c||}{$l_i$} & Method & $|B(I)|$ & $|\PSL(2,O_d/I)|$ \\ \hline \hline
$\myLangle 6,\frac{5+\sqrtMinusDSymbolTable{47}}{2}\myRangle = $ &
6 & 3 & 1 &
\texttt{LowerBound1} & $\geq 6.174\cdot 10^{9}$ &
72 \\
$\myLangle \frac{5+\sqrtMinusDSymbolTable{47}}{2}, \frac{7-\sqrtMinusDSymbolTable{47}}{2}\myRangle$  & 6 &-1 &1 & & &  \\ 
& 3 & 0 & 2 &&& \\
& 3 & 1 & -2 &&& \\
& 2 & 0 & 3 &&&\\
\hline
$\myLangle 4,1+\sqrtMinusDSymbolTable{47}\myRangle$ &
 &  &  &
$\containedInI\myLangle 2\myRangle$ & $\infty$ &
144 \\ \hline
$\myLangle 7,\frac{3+\sqrtMinusDSymbolTable{47}}{2}\myRangle = $ &
7 & 2 & 1 &
\texttt{LowerBound1} & $\geq 7.980\cdot 10^{11}$ &
168 \\
$\myLangle \frac{3+\sqrtMinusDSymbolTable{47}}{2}, \frac{11-\sqrtMinusDSymbolTable{47}}{2}\myRangle$  & 7 & -3 & 1 & & &  \\ 
& 7 &3 &1&&&\\
& 7 & 2 & 1 &&&\\
& 7 & -1 & 1 &&&\\ \hline
\markConjugate{$\myLangle 8,\frac{7-\sqrtMinusDSymbolTable{47}}{2}\myRangle = $} &
8 & -3 & 1 &
\texttt{LowerBound1} & $\geq 8.961\cdot 10^{26}$ &
192 \\
$\myLangle \frac{7-\sqrtMinusDSymbolTable{47}}{2}, \frac{9+\sqrtMinusDSymbolTable{47}}{2}\myRangle$  & 8 & 1 & 1 & & &  \\
& 2 & -1 & 4 &&&\\
& 4 & 1 & 2 &&&\\
& 8 & -2 & 1 &&&\\ \hline
$\myLangle 3\myRangle$ &
3 & 0 & 3 &
\texttt{LowerBound1(...,3)} & $\infty$ &
288 \\ 
& 3 & 0 & 3 &&&\\
& 3 & 0 & 3 &&&\\
& 3 & 0 & 3 &&&\\
& 3 & 0 & 3 &&&\\ \hline
$\myLangle 9,\frac{5+\sqrtMinusDSymbolTable{47}}{2}\myRangle = $ &
9 & 3 & 1 &
\texttt{LowerBound1} & $\geq 2.403\cdot 10^{43}$ &
324 \\
$\myLangle \frac{5+\sqrtMinusDSymbolTable{47}}{2}, \frac{13-\sqrtMinusDSymbolTable{47}}{2}\myRangle$  & 9 & 2 & 1 & & &  \\ 
& 9 & -3 & 1 &&&\\
&9 & 1 & 1 &&&\\
& 3 & 1 & 3 &&&\\ \hline
$\myLangle 6,1+\sqrtMinusDSymbolTable{47}\myRangle$ &
 &  &  &
$\containedInI\myLangle 2\myRangle$ & $\infty$ &
432 \\ \hline
$\myLangle \frac{1+\sqrtMinusDSymbolTable{47}}{2}\myRangle$ &
 &  &  &
$\containedInI\myLangle 6,\frac{1+\sqrtMinusDSymbolTable{47}}{2}\myRangle$ & $\infty$ &
576 \\ \hline
$\myLangle 12,\frac{7+\sqrtMinusDSymbolTable{47}}{2}\myRangle = $ &
 &  &  &
$\containedInI\myLangle 6,\frac{5-\sqrtMinusDSymbolTable{47}}{2}\myRangle$ & $\geq 6.174\cdot 10^{9}$ &
576 \\
$\myLangle \frac{7+\sqrtMinusDSymbolTable{47}}{2}, 5-\sqrtMinusDSymbolTable{47}\myRangle$  & & & & & &  \\ \hline
$\myLangle \frac{3+\sqrtMinusDSymbolTable{47}}{2}\myRangle$ &
 &  &  &
$\containedInI\myLangle 7,\frac{3+\sqrtMinusDSymbolTable{47}}{2}\myRangle$ & $\geq 7.980\cdot 10^{11}$ &
1008 \\ \hline
$\myLangle 4\myRangle$ &
 &  &  &
$\containedInI\myLangle 2\myRangle$ & $\infty$ &
1152 \\ \hline
$\myLangle 16,\frac{9+\sqrtMinusDSymbolTable{47}}{2}\myRangle = $ &
 &  &  &
$\containedInI\myLangle 8,\frac{7-\sqrtMinusDSymbolTable{47}}{2}\myRangle$ & $\geq 8.961\cdot 10^{26}$ &
1536 \\
$\myLangle \frac{9+\sqrtMinusDSymbolTable{47}}{2}, 7-\sqrtMinusDSymbolTable{47}\myRangle$  & & & & & &  \\ \hline
$\myLangle 6,\frac{3+3 \sqrtMinusDSymbolTable{47}}{2}\myRangle$ &
 &  &  &
$\containedInI\myLangle 3\myRangle$ & $\infty$ &
1728 \\ \hline
$\myLangle \frac{5+\sqrtMinusDSymbolTable{47}}{2}\myRangle$ &
 &  &  &
$\containedInI\myLangle 6,\frac{5+\sqrtMinusDSymbolTable{47}}{2}\myRangle$ & $\geq 6.174\cdot 10^{9}$ &
1944 \\ \hline
$\myLangle \frac{7+\sqrtMinusDSymbolTable{47}}{2}\myRangle$ &
 &  &  &
$\containedInI\myLangle 8,\frac{7+\sqrtMinusDSymbolTable{47}}{2}\myRangle$ & $\geq 8.961\cdot 10^{26}$ &
4608 \\ \hline
$\myLangle 5\myRangle$ &
5 & 0 & 5 &
\texttt{LowerBound1(...,5)} & $\infty$ &
7800 \\
& 5 &0 & 5 &&& \\
& 5 &0 & 5 &&& \\
& 5 &0 & 5 &&& \\
& 5 &0 & 5 &&& \\ \hline
$\myLangle 6\myRangle$ &
 &  &  &
$\containedInI\myLangle 2\myRangle$ & $\infty$ &
10368 \\ \hline
$\myLangle \frac{9+\sqrtMinusDSymbolTable{47}}{2}\myRangle$ &
 &  &  &
$\containedInI\myLangle 8,\frac{7-\sqrtMinusDSymbolTable{47}}{2}\myRangle$ & $\geq 8.961\cdot 10^{26}$ &
12288 \\ \hline
\end{tabular}
\end{center}
\explainConjugate{fig:disc47}
\end{table}

\clearpage

\begin{table}[h]
\caption{Magma function to produce $B(I)$ for $d=71$.\label{tbl:magma71}}
\begin{center}
\begin{minipage}{16cm}
\begin{verbatim}
function B71(n1,k1,l1,  n2,k2,l2, n3,k3,l3, n4,k4,l4, n5,k5,l5, n6,k6,l6, n7,k7,l7)
    return Group<g1,g2,g3,g4,g5,g6,g7,g8,g9|
        g8^3,(g8^-1,g4),(g8*g7^-1)^2,g1^-1*g3*g7*g3^-1*g1*g7^-1,
        g6*g3*g6^-1*g7*g9^-1*g3^-1*g9*g7^-1,
        g7^-1*g6*g3*g6^-1*g5^-1*g2*g7*g5*g6*g3^-1*g6^-1*g2^-1,
        g8*g7^-1*g1*g5*g6*g3^-1*g1*g5*g7*g8^-1*g5^-1*g1^-1*g3*g6^-1*g5^-1*g1^-1,
        g4^-1*g7^-1*g5^-1*g2*g1^-1*g3*g7*g9*g4*g1*g7^-1*g2^-1*g5*g7*g9^-1*g3^-1,
        (g5*g8*g7^-1*g5^-1*g1^-1*g7*g9*g6*g1*g5*g8*g7^-1*
         g5^-1*g1^-1*g3*g6^-1*g9^-1*g7^-1*g3^-1*g1),
        (g2*g6*g1*g5*g7*g8^-1*g5^-1*g1^-1*g3*g6^-1*g7*g8^-1*g5^-1*
         g2^-1*g5*g7*g8^-1*g5^-1*g1^-1*g7*g8^-1*g1*g5*g6*g3^-1*g6^-1),

        g7^n1,
        g7^k1*(g1^-1*g3)^l1,
        
        g2^n2,
        g2^k2*(g6*g1*g5*g7*g8^-1*g5^-1*g1^-1*g3*g6^-1*g7*g8^-1*g5^-1)^l2,
        
        g3^n3,
        g3^k3*(g6^-1*g7*g9^-1)^l3,
        
        (g7*g2)^n4,
        (g7*g2)^k4*(g6*g3*g6^-1*g5^-1*g7^-1)^l4,
        
        (g7*g9*g6)^n5,
        (g7*g9*g6)^k5*(g3^-1*g1*g5*g8*g7^-1*g5^-1*g1^-1)^l5,
        
        (g3*g9*g4)^n6,
        (g3*g9*g4)^k6*(g4^-1*g7^-1*g5^-1*g2*g7*g1^-1)^l6,

        (g4*g1*g7^-1*g2^-1*g5*g7*g9^-1*g3^-1*g8^-1*g4^-1)^n7,
        ((g4*g1*g7^-1*g2^-1*g5*g7*g9^-1*g3^-1*g8^-1*g4^-1)^k7 *
         (g6*g3^-1*g1*g5*g8*g7^-1*g5^-1*g1^-1*g6^-1*g9^-1*g8^-1*g4^-1)^l7)>;
end function;
\end{verbatim}
\end{minipage}
\end{center}
\end{table}

\begin{table}[h]
\caption{Computations for $d=71$ (compare to Figure~\ref{fig:disc71}).\label{tbl:results71}}
\begin{center}
\begin{tabular}{| c || r|r|r||c|c|c|}
\hline
Ideal & \multicolumn{1}{c|}{$n_i$} & \multicolumn{1}{c|}{$k_i$} & \multicolumn{1}{c||}{$l_i$} & Method & $|B(I)|$ & $|\PSL(2,O_d/I)|$ \\ \hline \hline
$\myLangle 2,\frac{1+\sqrtMinusDSymbolTable{71}}{2}\myRangle$ &
2 & 1 & 1 &
\multicolumn{2}{c|}{21-Link} &
6 \\ 
& 2 & 0 & 1 &\multicolumn{2}{c|}{}&\\
& 2 & 1 & 1 &\multicolumn{2}{c|}{}&\\
& 1 & 0 & 2 &\multicolumn{2}{c|}{}&\\
& 2 & 0 & 1 &\multicolumn{2}{c|}{}&\\
& 2 & 0 & 1 &\multicolumn{2}{c|}{}&\\
& 2 & 0 & 1 &\multicolumn{2}{c|}{}&\\ \hline
$\myLangle 3,\frac{1+\sqrtMinusDSymbolTable{71}}{2}\myRangle$ &
3 & 1 & 1 &
\texttt{LowerBound1} & $\infty$ &
12 \\
& 3 & -1 & 1 &&&\\
& 3 & 0 & 1 &&&\\
& 3 & 0 & 1 &&&\\
& 3 & 1 & 1 &&&\\
& 3 & 1 & 1 &&&\\
& 3 & 0 & 1 &&&\\ \hline
\markConjugate{$\myLangle 4,\frac{3-\sqrtMinusDSymbolTable{71}}{2}\myRangle$} &
4 & -1 & 1 &
\texttt{LowerBound1} & $\geq 862488$ &
24 \\ 
&4&2&1 &&&\\
&4&1&1&&&\\
&1&0&4&&&\\
&4&2&1 &&&\\
&4&0&1&&&\\
&4&2&-1 &&&\\ \hline
$\myLangle 2\myRangle$ &
2 & 0 & 2 &
\texttt{Lowerbound1(...,2)} & $\infty$ &
36 \\ 
&2&0&2&&&\\
&2&0&2&&&\\
&2&0&2&&&\\
&2&0&2&&&\\
&2&0&2&&&\\
&2&0&2&&&\\ \hline
$\myLangle 5,\frac{3+\sqrtMinusDSymbolTable{71}}{2}\myRangle$ &
5 & 2 & 1 &
\texttt{LowerBound1} & $\geq 2.929\cdot 10^9$ &
60 \\
&5&-1&1&&&\\
&5&2&-1&&&\\
&5&1&1&&&\\
&5&-1&1&&&\\
&5&2&1&&&\\
&5&2&1&&&\\ \hline
$\myLangle 6,\frac{1+\sqrtMinusDSymbolTable{71}}{2}\myRangle$ &
 &  &  &
$\containedInI\myLangle 3,\frac{1+\sqrtMinusDSymbolTable{71}}{2}\myRangle$ & $\infty$ &
72 \\ \hline
$\myLangle 6,\frac{5+\sqrtMinusDSymbolTable{71}}{2}\myRangle = $ &
&  &  &
$\containedInI\myLangle 3,\frac{1-\sqrtMinusDSymbolTable{71}}{2}\myRangle$ & $\infty$ &
72 \\
$\myLangle \frac{5+\sqrtMinusDSymbolTable{71}}{2}, \frac{7-\sqrtMinusDSymbolTable{71}}{2}\myRangle$  & & & & & &  \\ \hline
$\myLangle 4,1+\sqrtMinusDSymbolTable{71}\myRangle$ &
 &  &  &
$\containedInI\myLangle 2\myRangle$ & $\infty$ &
144 \\ \hline
$\myLangle 8,\frac{5+\sqrtMinusDSymbolTable{71}}{2}\myRangle = $ &
 &  &  &
$\containedInI\myLangle 4,\frac{3-\sqrtMinusDSymbolTable{71}}{2}\myRangle$ & $\geq 862488$ &
192 \\
$\myLangle \frac{5+\sqrtMinusDSymbolTable{71}}{2}, \frac{11-\sqrtMinusDSymbolTable{71}}{2}\myRangle$  & & & & & &  \\ \hline
$\myLangle 3\myRangle$ &
 &  &  &
$\containedInI\myLangle 3,\frac{1+\sqrtMinusDSymbolTable{71}}{2}\myRangle$ & $\infty$  &
288 \\ \hline
\end{tabular}
\end{center}
\explainConjugate{fig:disc71}
\end{table}

\begin{table}[h]
\caption{Computations for $d=71$ (continued).\label{tbl:results71b}}
\begin{center}
\begin{tabular}{| c || r|r|r||c|c|c|}
\hline
Ideal & \multicolumn{1}{c|}{$n_i$} & \multicolumn{1}{c|}{$k_i$} & \multicolumn{1}{c||}{$l_i$} & Method & $|B(I)|$ & $|\PSL(2,O_d/I)|$ \\ \hline \hline
$\myLangle 9,\frac{1+\sqrtMinusDSymbolTable{71}}{2}\myRangle$ &
 &  &  &
$\containedInI\myLangle 3,\frac{1+\sqrtMinusDSymbolTable{71}}{2}\myRangle$ & $\infty$ &
324 \\ \hline
$\myLangle 10,\frac{3+\sqrtMinusDSymbolTable{71}}{2}\myRangle = $ &
 &  &  &
$\containedInI\myLangle 5,\frac{3+\sqrtMinusDSymbolTable{71}}{2}\myRangle$ & $\geq 2.929\cdot 10^9$ &
360 \\
$\myLangle \frac{3+\sqrtMinusDSymbolTable{71}}{2}, \frac{17-\sqrtMinusDSymbolTable{71}}{2}\myRangle$  & & & & & &  \\ \hline
$\myLangle 10,\frac{7+\sqrtMinusDSymbolTable{71}}{2}\myRangle = $ &
 &  &  &
$\containedInI\myLangle 5,\frac{3-\sqrtMinusDSymbolTable{71}}{2}\myRangle$ & $\geq 2.929\cdot 10^9$ &
360 \\
$\myLangle \frac{7+\sqrtMinusDSymbolTable{71}}{2}, \frac{13-\sqrtMinusDSymbolTable{71}}{2}\myRangle$  & & & & & &  \\ \hline
$\myLangle 6,1+\sqrtMinusDSymbolTable{71}\myRangle$ &
&  &  &
$\containedInI\myLangle 2\myRangle$ & $\infty$ &
432 \\ \hline
$\myLangle 12,\frac{5+\sqrtMinusDSymbolTable{71}}{2}\myRangle = $ &
 &  &  &
$\containedInI\myLangle 4,\frac{3-\sqrtMinusDSymbolTable{71}}{2}\myRangle$ & $\geq 862488$ &
576 \\
$\myLangle \frac{5+\sqrtMinusDSymbolTable{71}}{2}, \frac{19-\sqrtMinusDSymbolTable{71}}{2}\myRangle$  & & & & & &  \\ \hline
$\myLangle 4\myRangle$ &
&  &  &
$\containedInI\myLangle 2\myRangle$ & $\infty$ &
1152 \\ \hline
$\myLangle 15,\frac{7+\sqrtMinusDSymbolTable{71}}{2}\myRangle = $ &
&&&
$\containedInI\myLangle 5,\frac{3-\sqrtMinusDSymbolTable{71}}{2}\myRangle$ & $\geq 2.929\cdot 10^9$ &
1440 \\
$\myLangle \frac{7+\sqrtMinusDSymbolTable{71}}{2}, 8-\sqrtMinusDSymbolTable{71}\myRangle$  & & & & & &  \\ \hline
$\myLangle 6,\frac{3+3 \sqrtMinusDSymbolTable{71}}{2}\myRangle$ &
 &  &  &
$\containedInI\myLangle 3,\frac{1+\sqrtMinusDSymbolTable{71}}{2}\myRangle$ & $\infty$  &
1728 \\ \hline
$\myLangle \frac{1+\sqrtMinusDSymbolTable{71}}{2}\myRangle$ &
 &  &  &
$\containedInI\myLangle 3,\frac{1+\sqrtMinusDSymbolTable{71}}{2}\myRangle$ & $\infty$ &
1944 \\ \hline
$\myLangle \frac{3+\sqrtMinusDSymbolTable{71}}{2}\myRangle$ &
 &  &  &
$\containedInI\myLangle 5,\frac{3+\sqrtMinusDSymbolTable{71}}{2}\myRangle$ & $\geq 2.929\cdot 10^9$ &
2880 \\ \hline
$\myLangle 19,\frac{9+\sqrtMinusDSymbolTable{71}}{2}\myRangle = $ &
19 & 5 & 1 &
\texttt{LowerBound2(...,3420,7)} & $\geq 2.782\cdot 10^{57}$ &
3420 \\
$\myLangle \frac{9+\sqrtMinusDSymbolTable{71}}{2}, 10-\sqrtMinusDSymbolTable{71}\myRangle$  & 19 & 9 & 1& & &  \\ 
& 19 & 2 & 1 &&&\\
& 19 & 3 & -1 &&&\\
& 19 & 8 & -1 &&&\\
& 19 &8 & 1 &&&\\
& 19 & 5 & -1 &&&\\ \hline
$\myLangle \frac{5+\sqrtMinusDSymbolTable{71}}{2}\myRangle$ &
&  &  &
$\containedInI\myLangle 4,\frac{3-\sqrtMinusDSymbolTable{71}}{2}\myRangle$ & $\geq 862488$ &
4608 \\ \hline
$\myLangle 5\myRangle$ &
&&&
$\containedInI\myLangle 5,\frac{3+\sqrtMinusDSymbolTable{71}}{2}\myRangle$ & $\geq 2.929\cdot 10^9$ &
7200 \\ \hline
$\myLangle \frac{7+\sqrtMinusDSymbolTable{71}}{2}\myRangle$ &
&&&
$\containedInI\myLangle 5,\frac{3-\sqrtMinusDSymbolTable{71}}{2}\myRangle$ & $\geq 2.929\cdot 10^9$ &
8640 \\ \hline
$\myLangle 6\myRangle$ &
 &  &  &
$\containedInI\myLangle 2\myRangle$ & $\infty$ &
10368 \\ \hline
$\myLangle \frac{9+\sqrtMinusDSymbolTable{71}}{2}\myRangle$ &
 &  &  &
$\containedInI\myLangle 19,\frac{9+\sqrtMinusDSymbolTable{71}}{2}\myRangle$ & $\geq 2.782\cdot 10^{57}$ &
20520 \\ \hline
\end{tabular}
\end{center}
\end{table}

\clearpage

\section{Corrections from  \cite{bakerReid:prinCong} and \cite{bakerReid:higherPrinCong}} For convenience we record some corrections to  \cite{bakerReid:prinCong} and \cite{bakerReid:higherPrinCong} that were uncovered whilst preparing this technical report.  None of these affect
the results contained in  \cite{bakerReid:prinCong} and \cite{bakerReid:higherPrinCong}.

Firstly, in  \cite{bakerReid:prinCong} we note that on page 1081, there are typos in the Magma routine for $\Gamma(\myLangle (1+i)^3 \myRangle)$. The 7th and 12th relations should read
\begin{verbatim} u^-1*a*t^-2*u^2*a*u, a*t^-2*a*t^-2*u^2*a*t^2*a\end{verbatim} respectively.

Similarly on page 1084, there is a typo in the Magma routine for $\Gamma(\myLangle (3+\sqrt{-11})/2 \myRangle)$.  In definition of the subgroup $m$, the last relation should read 
\begin{verbatim}t^2*a*t^-2*a*(t*u)*a*t^2*a*t^-2\end{verbatim}

Turning to \cite{bakerReid:higherPrinCong}, in the Magma routine for the case of $(15,\myLangle 4, \omega_{15}\myRangle)$, the group $Q$ should read:
\begin{verbatim}Q:=quo<A|t^4*u, t*a*(t^4*u)*a*t^-1, (t*a)^-1*(t^4*u)*t*a, u*c*a, t*a*(u*c*a)*(t*a)^-1, \end{verbatim}
\begin{verbatim}(t*a)^-1*(u*c*a)*t*a, t^2*(t*a)^-1*u*t*a*t^-2, t^2*(u*c*a)*t^-2>;\end{verbatim}

In Tables 4, 5 and 6 of \cite{bakerReid:higherPrinCong}, the following peripheral subgroups should be replaced by those shown in Tables 21, 23 and 25:

\noindent In Table 4, the second and third peripheral subgroups for the entry $\myLangle 13, 4+\omega_{23} \myRangle$.

\smallskip

\noindent In Table 5, the first peripheral subgroup for the entry $\myLangle 19, 5+\omega_{31} \myRangle$.

\smallskip 

\noindent In Table 6, the first peripheral subgroup for the entry $\myLangle 2, \omega_{39} \myRangle^2$ and the
the second, third and fourth peripheral subgroups for the entry $\myLangle 2, \omega_{39} \myRangle^3$. Also note that in this latter case the order of $B_d(I)$ should be recorded as $>>1$.

\smallskip

\noindent In addition, there were omissions in Tables 4 and 5 of \cite{bakerReid:higherPrinCong}, namely the ideals $\myLangle 6,-3+\omega_{23}\myRangle$, 
$\myLangle (5+\sqrt{-23})/2 \myRangle$ and $\myLangle 10, (7+\sqrt{-31})/2 \myRangle$. That these do not give link complements are recorded in Tables 21 and 23.

In Appendix A of \cite{bakerReid:higherPrinCong}, the list of matrix generators for $\PSL(2,O_{47})$ contains some typos:  the $(1,2)$-entry of $g_5$ is incorrect, it should be $-3+3\omega_{47}$. Similarly the $(1,1)$-entry of $g_7$ should be $1-2\omega_{47}$.

Finally in Appendix B, the cases of $5$ splitting were not recorded in the the cases of $d=39$ and $d=71$.

\clearpage

\section{Link complements proofs}

In this section, we explain how the functions defined in the Magma file \href{http://unhyperbolic.org/prinCong/prinCong/magma/LinkComplementHelpers.m}{\texttt{LinkComplementHelpers.m}} work and can be used to prove that a principal congruence manifold is a link complement. We provide Magma files to show this for all the 48 principal congruence manifolds in question at \cite[\href{http://unhyperbolic.org/prinCong/prinCong/magma/}{\texttt{prinCong/magma/}}]{goerner:data}. For example, to check all $d=2$ cases, one can run the following shell command:
\begin{center}
\begin{minipage}{14cm}
\begin{verbatim}
magma LinkComplementHelpers.m LinkComplement2.m
\end{verbatim}
\end{minipage}
\end{center}
which produces the following output:
\begin{center}
\begin{minipage}{14cm}
\begin{verbatim}
<1+sqrt(-2)>
4-Link
<2>
12-Link
...
\end{verbatim}
\end{minipage}
\end{center}

\subsection{Class number one}

We use $(2,\myLangle 1+\sqrt{-2}\myRangle)$ as an example how to verify a principal congruence link complement $M=\H^3/\Gamma(I)$ using the function \texttt{VerifyLink}:
\begin{center}
\begin{minipage}{13cm}
\begin{verbatim}
// Presentation of Bianchi group
Bianchi2<a,t,u> := Group<a,t,u|a^2,(t*a)^3,(a*u^-1*a*u)^2,(t,u)>;
// Parabolic elements fixing cusp of the Bianchi Bianchi orbifold.
Bianchi2P := [[ t, u ]];

VerifyLink(
    Bianchi2, Bianchi2P,
    [[3,1,1]], 12,
    [[ <Id(Bianchi2), [ 0, 1]>,
       <a,            [ 0, 1]>,
       <t*a,          [ 0, 1]>,
       <t^-1*a,       [-1, 1]>]]);
\end{verbatim}
\end{minipage}
\end{center}
This code will output \texttt{4-Link} confirming that all necessary tests have passed and $M$ is a link complement. The first two arguments to the function \texttt{VerifyLink} are the finitely presented Bianchi group and the elements generating $P_\infty$ from Section~\ref{sec:presentationBianchi}. The next argument is the triple $(n,k,l)$ from Section~\ref{sec:findBIClassNumberOne} and the size of $\PSL(2,O_d)$. Next is a list of pairs $(g, (p,q))$ of an element $g$ in the Bianchi group specifying a cusp and Dehn-filling coefficients $(p,q)$.

\newcounter{verifyLinkTests}

The function will use these arguments to perform three tests:
\begin{enumerate}
\item \texttt{VerifyLink} checks whether $|B(I)| = |\PSL(2,O_d/I)|$ using a presentation of $B(I)$ similar to the one in, e.g., Table~\ref{tbl:magma2}.
\setcounter{verifyLinkTests}{\value{enumi}}
\end{enumerate}
Recall from Section~\ref{sec:Prelims} that this implies that $N(I)$ is the fundamental group of $M=\H^3/\Gamma(I)$. By Perelman's Theorem, it suffices to find Dehn-fillings of $M$ trivializing the fundamental group. This is equivalent to finding a set of primitive elements in $N(I)$ generating $N(I)$ such that each element is conjugate to an element in $P_\infty$ and corresponds to a different cusp of $M$. These primitive elements are specified as last argument to \texttt{VerifyLink}, namely, a pair $(g, (p,q))$ presents the element $g (t^n)^p (t^ku^l)^q g^{-1}$. For example \texttt{<a, [ 0, 1]>} yields \verb|a*(t*u)*a^-1|, an element in \texttt{Bianchi2} which is also in $N(I)\subset\PSL(2,O_d)$.
\begin{enumerate}
\setcounter{enumi}{\value{verifyLinkTests}}
\item  \texttt{VerifyLink} checks that the elements specified this way are indeed generating $N(I)$ or, equivalently, the quotient of $N(I)$ by the subgroup generated by these elements is trivial. In our example, the function would perform a test equivalent to checking that the following code returns 1:
\begin{center}
\begin{minipage}{13cm}
\begin{verbatim}
N := NormalClosure(Bianchi2, sub<Bianchi2 | t^3, t*u>);
Q := quo<N | (t*u),
             a*(t*u)*a^-1,
             t*a*(t*u)*a^-1*t^-1,
             t^-1*a*(t^-2*u)*a^-1*t>;
Order(Q);
\end{verbatim}
\end{minipage}
\end{center}
\item \texttt{VerifyLink} checks that the elements $g\in \PSL(2,O_d)$ actually correspond to different cusps in $M$ where an element $g$ corresponds to the cusp of $M$ that is represented by the point in $\C P^1$ fixed by $gP_\infty g^{-1}$.\\
Consider the map $\pi:\PSL(2,O_d)\to \PSL(2,O_d/I)\cong B(I)$. If $d\not=1,3$, two elements $g$ and $g'\in \PSL(2,O_d)$ correspond to the same cusp if and only if their images $\pi(g)$ and $\pi(g')$ yield the the same left cosets $\pi(g)\pi(P_\infty)$ and $\pi(g')\pi(P_\infty)$ in $\PSL(2,O_d/I)/\pi(P_\infty)$. If $d=1,3$, we need to check whether they are the same left coset in $\PSL(2,O_d/I)/\myLangle l,t,u\myRangle$. The test in \texttt{VerifyLink} first checks  that the number of given pairs $(g, (p,q))$ is equal to $|B(I)|/|\pi(P_\infty)|$. It then checks that the union of all cosets $g\pi(P_\infty)$ has size $|B(I)|$.
\end{enumerate}

\subsection{Higher class numbers} For higher number class, verifying $M=\H^3/\Gamma(I)$ works similarly except that we need to specify a list of tuples $(n_i,k_i,l_i)$ as in Section~\ref{sec:findBIHigherClassNumber} and need to give a list of list of Dehn-fillings, each list specifying the Dehn-fillings of all the cusps of $M$ corresponding to the same Bianchi orbifold cusp. The following code illustrates this for $(15,\myLangle 2, \frac{1+\sqrt{-15}}{2}\myRangle)$, see Tables~\ref{tbl:magma15} and \ref{tbl:results15}:
\begin{center}
\begin{minipage}{15.3cm}
\begin{verbatim}
Bianchi15<a,c,t,u> := Group<a,c,t,u|(t,u),(a,c),a^2,(t*a)^3,
                                    u*c*u*a*t*u^-1*c^-1*u^-1*a*t^-1>;
Bianchi15P := [[t, u], [u*c*a, c^-1*a*u^-1*c^-1*u^-1*t*a]];

VerifyLink(
    Bianchi15, Bianchi15P,
    [[2,0,1],[1,0,2]], 6,
    [[<Id(Bianchi15), [ 1, 0]>,     // gives t^2
      <(t*a),         [ 0, 1]>,     // gives (t*a)*u*(t*a)^-1
      <(t*a)^2,       [ 0, 1]>],    // gives (t*a)^2*u*(t*a)^-2
     [<Id(Bianchi15), [ 1, 0]>,     // gives u*c*a
      <(t*a),         [ 1, 0]>,     // gives (t*a)*(u*c*a)*(t*a)^-1
      <(t*a)^2,       [ 0, 1]>]]);  // (t*a)^2*(c^-1*a*u^-1*c^-1*u^-1*t*a)^2*(t*a)^-2
\end{verbatim}
\end{minipage}
\end{center}
An element $(g,(p,q))$ in the $i$-th list of Dehn fillings now corresponds to $g (p_{(i),1}^{n_i})^{p} (p_{(i),1}^{k_i} p_{(i),2}^{l_i})^q g^{-1}\in N(I)$. The same three tests are performed. The test whether the different $g$ belong to different cusps of $M$ is done separately for each Bianchi orbifold cusp, inspecting the respective list of Dehn-fillings and using the respective $P_{(i)}$-cosets.

\subsection{The \texttt{Expand} helper}

Note that the elements $g$ in the above Magma example for $(15,\myLangle 2, \frac{1+\sqrt{-15}}{2}\myRangle)$ repeat. To abbreviate the data we need to give, we can use the \texttt{Expand} helper requiring to specify the list of $g$'s only once as first argument:
\begin{center}
\begin{minipage}{15.3cm}
\begin{verbatim}
VerifyLink(
    Bianchi15, Bianchi15P,
    [[2,0,1],[1,0,2]], 6,
    Expand([Id(Bianchi15), t*a, (t*a)^2],
           [ [ [ 1, 0], [ 0, 1], [ 0, 1] ],
             [ [ 1, 0], [ 1, 0], [ 0, 1] ] ]));
\end{verbatim}
\end{minipage}
\end{center}
The \texttt{Expand} helper can be used in most but not all higher class number cases, e.g., it cannot be used for $(47,\myLangle\frac{1+\sqrt{-47}}{2}\myRangle)$.

\clearpage

\subsection{Links with symmetries acting freely on cusps} In the following example for $(11,\myLangle \frac{3+\sqrt{-11}}{2}\myRangle)$, note that the Dehn-filling coefficients are invariant under the action of $a$:
\begin{center}
\begin{minipage}{15.3cm}
\begin{verbatim}
Bianchi11<a,t,u> := Group<a,t,u|a^2,(t*a)^3,(a*t*u^-1*a*u)^3,(t,u)>;
Bianchi11P := [[ t, u ]];

VerifyLink(
    Bianchi11, Bianchi11P,
    [[5,1,1]], 60,
    [[ <   Id(Bianchi11),    [ 0, 1]>,
       <   t*a,              [ 0, 1]>,
       <   t^2*a,            [-1, 2]>,
       <   t^-2*a^-1,        [ 0, 1]>,
       <   t*a*t^-2*a,       [ 0, 1]>,
       <   t^2*a*t^-2*a^-1,  [ 0, 1]>,
       
       <a* Id(Bianchi11),    [ 0, 1]>,
       <a* t*a,              [ 0, 1]>,
       <a* t^2*a,            [-1, 2]>,
       <a* t^-2*a^-1,        [ 0, 1]>,
       <a* t*a*t^-2*a,       [ 0, 1]>,
       <a* t^2*a*t^-2*a^-1,  [ 0, 1]>]]);
\end{verbatim}
\end{minipage}
\end{center}
This means that the order-2 symmetry $a$ of the principal congruence manifold $M$ extends to the link. The symmetry is swapping the cusps pairwise fixing none. Similarly to \texttt{Expand}, the function \texttt{Symmetrize} can be used to abbreviate the above example to:
\begin{center}
\begin{minipage}{15.3cm}
\begin{verbatim}
VerifyLink(
    Bianchi11, Bianchi11P,
    [[5,1,1]], 60,
    Symmetrize(
        a, 2,   // Symmetry and its order
        [[]],
        [[<Id(Bianchi11),    [ 0, 1]>,
          <t*a,              [ 0, 1]>,
          <t^2*a,            [-1, 2]>,
          <t^-2*a^-1,        [ 0, 1]>,
          <t*a*t^-2*a,       [ 0, 1]>,
          <t^2*a*t^-2*a^-1,  [ 0, 1]>]]));
\end{verbatim}
\end{minipage}
\end{center}

\clearpage

\subsection{Symmetric links}
Since some principal congruence links have symmetries fixing some cusps, \texttt{Symmetrize} takes two lists for specifying Dehn-fillings on cusps. The first list is for the cusps which are fixed and the second one for those which are not fixed. Here is $(15,\myLangle\frac{1+\sqrt{-15}}{2}\myRangle)$ as an example with class number $h_d=2$. The code
\begin{center}
\begin{minipage}{15.3cm}
\begin{verbatim}
Bianchi15<a,c,t,u> := Group<a,c,t,u|(t,u),(a,c),a^2,(t*a)^3,
                                    u*c*u*a*t*u^-1*c^-1*u^-1*a*t^-1>;
Bianchi15P := [[t, u], [u*c*a, c^-1*a*u^-1*c^-1*u^-1*t*a]];

VerifyLink(
    Bianchi15, Bianchi15P,
    [[4,0,1],[1,0,4]], 24,
    [[ <(t*a)^-1,                   [ 1, 1]>,  // fixed by symmetry a*t^2*a
       <t^2*(t*a)^-1,               [ 0, 1]>,  // fixed by symmetry a*t^2*a
           
       <Id(Bianchi15),              [ 1, 1]>,
       <t*a,                        [ 1, 1]>,
           
       <(a*t^2*a)*  Id(Bianchi15),  [ 1, 1]>,
       <(a*t^2*a)*  t*a,            [ 1, 1]>],
     [ <Id(Bianchi15),              [ 1, 0]>,  // fixed by symmetry a*t^2*a
       <t^2,                        [ 1, 0]>,  // fixed by symmetry a*t^2*a
           
       <t*a,                        [ 1, 0]>,
       <(t*a)^-1,                   [ 1, 0]>,
           
       <(a*t^2*a)*  t*a,            [ 1, 0]>,
       <(a*t^2*a)*  (t*a)^-1,       [ 1, 0]>]]);
\end{verbatim}
\end{minipage}
\end{center}
can be abbreviated to 
\begin{center}
\begin{minipage}{15.3cm}
\begin{verbatim}
VerifyLink(
    Bianchi15, Bianchi15P,
    [[4,0,1],[1,0,4]], 24,
    Symmetrize(
        a*t^2*a, 2,
        [[ <(t*a)^-1,       [ 1, 1]>,       // fixed by symmetry a*t^2*a
           <t^2*(t*a)^-1,   [ 0, 1]> ],
         [ <Id(Bianchi15),  [ 1, 0]>,
           <t^2,            [ 1, 0]> ]],
        [[ <Id(Bianchi15),  [ 1, 1]>,       // not fixed by symmetry a*t^2*a
           <t*a,            [ 1, 1]> ],
         [ <t*a,            [ 1, 0]>,
           <(t*a)^-1,       [ 1, 0]> ]]));
\end{verbatim}
\end{minipage}
\end{center}

\subsection{Fully symmetric links} Notice that all Dehn-filling coefficients $(p, q)$ are the same in the following example for the case $(7,\myLangle\frac{1+\sqrt{-7}}{2}\myRangle)$:
\begin{center}
\begin{minipage}{13cm}
\begin{verbatim}
Bianchi7<a,t,u> := Group<a,t,u|a^2,(t*a)^3,(a*t*u^-1*a*u)^2,(t,u)>;
Bianchi7P := [[ t, u ]];

VerifyLink(
    Bianchi7,Bianchi7P,
    [[2,0,1]], 6,
    [[<Id(Bianchi7), [ 0, 1]>,
      <t*a,          [ 0, 1]>,
      <(t*a)^2,      [ 0, 1]>]]);
\end{verbatim}
\end{minipage}
\end{center}
This means that every symmetry of the principal congruence manifold extends to the link (in fact, since the ideal has norm $\idealNorm(I)=2$, $B(I)\cong \PSL(2,O_d/I)$ is isomorphic to $A_3$ and acts as such on the three cusps of $M$). In cases where there is a link with this many symmetries, we can use the function \texttt{VerifyLink2} to verify a principal congruence link complement:
\begin{center}
\begin{minipage}{13cm}
\begin{verbatim}
VerifyLink2(Bianchi7, Bianchi7P, [[ 0, 1]], 6);
\end{verbatim}
\end{minipage}
\end{center}
This function checks that the quotient group $Q$ given by
\begin{center}
\begin{minipage}{13cm}
\begin{verbatim}
Q := quo<Bianchi7 | t^0 * u^1>;
Order(Q);
\end{verbatim}
\end{minipage}
\end{center}
has order 6 which is equal to $|\PSL(2,O_d/I)|$. Note that the quotient $Q$ has fewer relations than $B(I)$, so $|Q|=|\PSL(2,O_d/I)|$ implies $|B(I)|=|\PSL(2,O_d/I)|$ and that $N(I)$ is the fundamental group of $M=\H^3/\Gamma(I)$. Furthermore, it shows that $N(I)$ is generated by conjugates of a single parabolic element by $\PSL(2,O_d)$. Unless $d=1$ or $3$, these conjugates give a well-defined peripheral curve for each cusp of $M$ such that Dehn-filling along them trivializes the fundamental group of $M$.

For higher class numbers $h_d>1$, we need to give a list of pairs $(p_1,q_1), ..., (p_{h_d},q_{h_d})$ and the function checks the size of the quotient of $\PSL(2,O_d)$ by the normal subgroup generated by $p_{(1),1}^{p_1} p_{(1),2}^{q_1}, \dots, p_{(h_d),1}^{p_{h_d}} p_{(h_d),2}^{q_{h_d}}$.

Note that this technique does not work for $d=1$ and $d=3$ since the stabilizer of $\infty$ in $\PSL(2,O_1)$ and $\PSL(2,O_3)$ is larger than $P_\infty$ and contains torsion elements.

\clearpage

\bibliographystyle{hepMatthias}
\bibliography{technical_report}

\end{document}